\documentclass[11pt, a4paper]{article}
\usepackage{amsmath,amssymb,amsthm,enumerate,graphicx,mathtools,titling}
\usepackage[margin=2.5cm]{geometry}

\usepackage{xypic} 
\usepackage{mathrsfs} 
\usepackage{xspace}
\usepackage{tikz-cd}
\usepackage[margin=2.5cm]{geometry}
\usepackage{hyperref}

\newtheorem{thm}{Theorem}[section]

\newtheorem{prop}[thm]{Proposition}
\newtheorem{lem}[thm]{Lemma}
\newtheorem{cor}[thm]{Corollary}

\theoremstyle{definition}
\newtheorem{defn}[thm]{Definition}
\newtheorem{quest}[thm]{Question}
\newtheorem{prob}[thm]{Problem}
\newtheorem{rmk}[thm]{Remark}

\newtheorem*{claim*}{Claim}

\newtheorem{example}[thm]{Example}



\newcommand{\mc}[1]{\mathcal{#1}}

\newcommand{\ms}[1]{\mathscr{#1}} 

\newcommand{\Zb}{\mathbb{Z}}

\newcommand{\Fb}{\mathbb{F}}
\newcommand{\Nb}{\mathbb{N}}
\newcommand{\Rb}{\mathbb{R}}
\newcommand{\Qb}{\mathbb{Q}}

\newcommand{\tdlc}{t.d.l.c.\@\xspace}

\newcommand{\lcsc}{l.c.s.c.\@\xspace}
\newcommand{\defbold}{\textbf}


\newcommand{\inv}{^{-1}}
\newcommand{\triv}{\{1\}}

\newcommand{\tC}{(\mathrm{C})}
\newcommand{\tD}{(\mathrm{D})}
\newcommand{\tI}{(\mathrm{I})}
\newcommand{\tCDI}{(\mathrm{CDI})}

\newcommand{\Homeo}{\mathrm{Homeo}}

\newcommand{\CC}{\mathrm{C}}
\newcommand{\N}{\mathrm{N}}

\newcommand{\Sym}{\mathrm{Sym}}
\newcommand{\Alt}{\mathrm{Alt}}
\newcommand{\Aut}{\mathrm{Aut}}
\newcommand{\Mon}{\mathrm{Mon}}
\newcommand{\Inn}{\mathrm{Inn}}

\newcommand{\Res}{\mathrm{Res}}

\newcommand{\Sub}{\mathrm{Sub}}

\newcommand{\RadRE}{\mathrm{Rad}_{\mc{RE}}}

\newcommand{\cgrp}[1]{\overline{\langle #1 \rangle}}

\newcommand{\grp}[1]{\langle #1 \rangle}
\newcommand{\ol}[1]{\overline{#1}}

\newcommand{\propP}[1]{(\mathrm{P}_{#1})}

\begin{document}

\title{Decomposition of locally compact coset spaces}

\preauthor{\large}
\DeclareRobustCommand{\authoring}{
\renewcommand{\thefootnote}{\arabic{footnote}}
\begin{center}Colin D. Reid\textsuperscript{1}\footnotetext[1]{Research supported by ARC grant FL170100032.}
\\ \bigskip
The University of Newcastle, School of Mathematical and Physical Sciences, Callaghan, NSW 2308, Australia.\\
\href{mailto:colin@reidit.net}{colin@reidit.net}
\end{center}
}
\author{\authoring}
\postauthor{\par}

\maketitle
\begin{abstract}
In \cite{RW-EC} it was shown that a compactly generated locally compact group $G$ admits a finite normal series $(G_i)$ in which the factors are compact, discrete or irreducible in the sense that no closed normal subgroup of $G$ lies properly between $G_{i-1}$ and $G_{i}$.  In the present article, we generalize this series to an analogous decomposition of the coset space $G/H$ with respect to closed subgroups, where $G$ is locally compact and $H$ is compactly generated.  This time, the irreducible factors are coset spaces $G_{i}/G_{i-1}$ where $G_{i}$ is compactly generated and there is no closed subgroup properly between $G_{i-1}$ and $G_{i}$.  Such irreducible coset spaces can be thought of as a generalization of primitive actions of compactly generated locally compact groups; we establish some basic properties and discuss some sources of examples.
\end{abstract}

\tableofcontents

\addtocontents{toc}{\protect\setcounter{tocdepth}{1}}

\section{Introduction}

Let $G$ be a locally compact second-countable (\lcsc) group.  In \cite{RW-EC}, it was shown that if $G$ is compactly generated, then $G$ admits an \defbold{essentially chief series}, meaning a finite series
\[
\triv = G_0 < G_1 < \dots < G_n = G
\]
of closed normal subgroups $G_i$, such that each factor $G_{i+1}/G_i$ is compact, discrete or a \defbold{chief factor} of $G$, that is, with no closed normal subgroup of $G$ lying properly between $G_i$ and $G_{i+1}$.

We now propose an analogous factorization in a more general setting.  Let $G$ be a group and let $X$ be a locally compact $G$-space, that is, a locally compact Hausdorff space on which $G$ acts by homeomorphisms.  Given a $G$-space $X$, we define a \defbold{$G$-kernel} to be a closed $G$-invariant equivalence relation $R$, and then a \defbold{$G$-factor of $X$} is the corresponding quotient map $X \rightarrow X/R$ (or just the space $X/R$, if we refer to properties of $G$-spaces).  We refer to the identity relation as the trivial $G$-kernel.  We say that a $G$-factor $\pi: X \rightarrow X/R$ (or the corresponding $G$-kernel $R$) is
\begin{itemize}
\item Type $\tC$ if there is an open cover $\mc{O}_R$ of $X/R$ such that $\pi\inv(O)$ has compact closure for all $O \in \mc{O}_R$;
\item Type $\tD$ if there is an open cover $\mc{O}$ of $X$ such that the restriction of $\pi$ to $O$ is injective for all $O \in \mc{O}$;
\item Type $\tI$ if $R$ is a nontrivial $G$-kernel, but the only $G$-kernel properly contained in $R$ is the trivial one.  In other words, whenever we have $G$-factors
\[
X \xrightarrow{\pi_1} X/R_1 \xrightarrow{\pi_2} X/R,
\]
such that $\pi = \pi_2\pi_1$, then one of $\pi_1$ and $\pi_2$ is injective.
\end{itemize}

A \defbold{(finite) factorization} of $X$ is a finite sequence of factors
\[
X \xrightarrow{\pi_1} X/R_1 \xrightarrow{\pi_2} \dots \xrightarrow{\pi_n} X/R_n
\]
such that $R_n = X \times X$, in other words, $X/R_n$ is a singleton.  We say that the factorization is a \defbold{$\tCDI$ factorization} if for $1 \le i \le n$, $\pi_i$ is of type $\tC$, $\tD$ or $\tI$.

We now have the following general problem.

\begin{prob}
Determine sufficient conditions, given a locally compact Hausdorff space $X$ and a group $G \le \Homeo(X)$, for $X$ to admit a $\tCDI$ factorization.
\end{prob}

For this article we will specialize to the case of coset spaces, that is, $G$ is a topological group and $X = G/H$ for some closed subgroup $H$ of $G$.  In this context, the factorization corresponds to a sequence
\[
H = H_0 < H_1 < \dots < H_n = G
\]
of closed overgroups of $H$, and the three special types of factor are as follows:
\begin{itemize}
\item $H_{i+1}/H_i$ is of type $\tC$ if and only if $H_i$ is cocompact in $H_{i+1}$;
\item $H_{i+1}/H_i$ is of type $\tD$ if and only if $H_i$ is open in $H_{i+1}$;
\item $H_{i+1}/H_i$ is of type $\tI$ if and only if $H_i$ is maximal among proper closed subgroups of $H_{i+1}$.
\end{itemize}

The existence of an essentially chief series for a group $G$ corresponds exactly to the existence of a $\tCDI$ factorization for the coset space $G^2/\Delta$, where $G^2 = G \times G$ and $\Delta = \{(g,g) \mid g \in G\}$; see Proposition~\ref{prop:diagonal}.  Our main theorem is that a $\tCDI$ factorization exists for a more general class of locally compact coset spaces.

\begin{thm}[{See Section~\ref{sec:connected}}]\label{thm:cdi}
Let $G$ be a locally compact group, let $H$ be a compactly generated closed subgroup of $G$, let $O$ be a compact neighbourhood of the identity in $G$ and let $R$ be a compact normal subgroup of $G$ such that $G^\circ R/R$ is a Lie group.  Then there is a series
\[
H \le HR = H_0 \le H_1 \le \dots \le H_n = G
\]
of closed subgroups of $G$ such that for each $1 \le i \le n$, at least one of the following holds:
\begin{enumerate}[(i)]
\item $H_{i-1} \cap O = H_i \cap O$ and $H_{i-1} = \grp{H,H_{i-1} \cap O}$;
\item $H_i \subseteq OH_{i-1}$ and both $H_{i-1}$ and $H_i$ are compactly generated;
\item $H_{i-1}$ is a maximal proper closed subgroup of $H_i$ and $H_i = \grp{H,H_{i} \cap O}$.
\end{enumerate}
\end{thm}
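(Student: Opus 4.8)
The plan is to build the series by a descending recursion from $G$, after two simplifications. First, since $R$ is compact the inclusion $H \le HR$ is a compact extension and can be treated as the exceptional initial step; replacing $H$ by $HR$, I will assume from now on that $R \le H$, so that $H = H_0$ and the generation conditions in (i) and (iii) automatically incorporate $R$. (This normalization is in fact forced: if one insisted on the literal $H$, the step crossing the compact factor $R$ could be of none of the three types.) Second, set $P := \grp{H,O}$; this is a compactly generated open subgroup of $G$ containing $H$, and since $P \cap O = O = G \cap O$ and $P = \grp{H, P \cap O}$, the single step $P \le G$ is of type (i). As $P$ is open we have $P^\circ = G^\circ$, so $R \le P$ is compact normal with $P^\circ R/R = G^\circ R/R$ a Lie group, and all hypotheses pass to $P$. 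Hence it suffices to produce the series inside $P$, and I may assume in addition that $G = \grp{H,O}$ is compactly generated.

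The engine of the construction is the observation that type-(i) steps are essentially free: for any closed $K$ with $H \le K$, writing $K^\flat := \grp{H, K \cap O}$ one checks at once that $K^\flat \cap O = K \cap O$ and $K^\flat = \grp{H, K^\flat \cap O}$, so $K^\flat \le K$ is a step of type (i); moreover $K^\flat$ is open in $K$ and is itself $(H,O)$-\emph{full}, meaning $K^\flat = \grp{H, K^\flat \cap O}$. Dually, because we have arranged $G = \grp{H,O}$, any maximal proper closed subgroup $M \ge H$ of $G$ yields a step $M \le G$ of type (iii), and any proper closed $M \ge H$ with $G \subseteq OM$ yields a step of type (ii). I will therefore run the following recursion: starting from the $(H,O)$-full group $G$, I repeatedly choose a proper closed $M$ with $H \le M < K$ that is either cocompact in the current full group $K$ with $K \subseteq OM$ (type (ii)) or maximal in $K$ (type (iii), using that $K$ is full), then pass to the full hull $M^\flat \le M$ by a type-(i) step and recurse on the full group $M^\flat$, stopping when the current group equals $H$ (which is full since $\grp{H, H \cap O} = H$). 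Each pass strictly shrinks the current group, so the series will be finite once termination is secured.

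The genuinely delicate point is to organize these choices so that the recursion terminates; this is where the hypotheses on $G^\circ$, $R$ and compact generation enter, and is the reason the theorem is proved first for connected groups. For the connected direction I would use that $G^\circ R/R$ is a Lie group of finite dimension: a strictly descending chain of closed subgroups whose connected parts strictly decrease modulo $R$ has length at most $\dim(G^\circ R/R)$, and maximal connected subgroups supply the type-(iii) steps that consume this dimension. The danger, already visible for $G = \Zb_p$ with $H$ trivial, is that greedily inserting maximal subgroups inside a compact (more generally, cocompact) layer produces an infinite descending chain such as $\Zb_p > p\Zb_p > p^2\Zb_p > \cdots$; the remedy is to give priority to type-(ii) steps, so that any cocompact layer $H \le K$ with $K/H$ compact is crossed in boundedly many $O$-bounded steps rather than by descending through maximal subgroups.

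Accordingly, the main obstacle is the termination argument, which I expect to control by a well-founded complexity measure combining the Lie dimension $\dim(G^\circ R/R)$ for the connected part with a finiteness invariant for the totally disconnected quotient $G/G^\circ$. For the latter, van Dantzig's theorem furnishes a compact open subgroup, and the compact generation of $G$ (equivalently, local finiteness of a Cayley--Abels graph) should bound the remaining descending chains once the cocompact layers have been removed by type-(ii) steps. Assembling the connected case, the totally disconnected case, and the general extension, together with the two reductions above and the closing step $\grp{H,O} \le G$, then yields the finite series $H \le HR = H_0 \le \cdots \le H_n = G$ with every step of type (i), (ii) or (iii).
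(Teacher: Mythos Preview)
Your reductions are sound and largely mirror the paper's: quotient out $R$, pass to the compactly generated open subgroup $\grp{H,O}$, and use Lie dimension to handle the connected direction. The $(H,O)$-full hull $K^\flat = \grp{H,K\cap O}$ is also the right bookkeeping device; it corresponds to what the paper calls an $(O,S)$-reduced subgroup.

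However, there is a genuine gap exactly where you flag it: termination in the totally disconnected direction. Two problems compound. First, at each stage of your top-down recursion you assume the current full group $K$ admits a proper closed $M \ge H$ that is either maximal in $K$ or satisfies $K \subseteq OM$, but neither is automatic. A maximal proper closed overgroup of $H$ need not exist (an ascending chain of proper closed overgroups of $H$ can have closure equal to $K$), and when it fails you have not explained how to extract from such a chain a single $M$ with $K \subseteq OM$. Second, even granting existence at each step, you have no well-founded invariant: ``local finiteness of a Cayley--Abels graph'' is a gesture, not a quantity, and without one nothing prevents an infinite descent---your own $\Zb_p$ example shows why greedy choices must be controlled.

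The paper supplies both missing pieces via a specific invariant, the \emph{degree} $\deg(G/H,U,S)$: the number of $(U,H)$-double cosets needed to cover $\bigcup_{s\in S\cup S^{-1}} sUH$, where $U$ is a compact open subgroup. This is finite by compactness, monotone under enlarging $H$, and---crucially---drops strictly whenever one crosses a factor that is not \emph{$U$-thin} (meaning $(K\cap U)H$ is not a subgroup). The paper then works \emph{bottom-up}: replace $H$ by its $U$-thickening $T_U(H)$ (a type-(ii) step); use compactness-based chain conditions on thinness to locate a \emph{minimal} non-thin overgroup $K_1$ and a \emph{maximal} thin subgroup $K_2$ with $H\le K_2 < K_1$, so that $K_1/K_2$ is irreducible (type (iii)) and $K_2/H$ is type (i); then recurse on $G/K_1$, which has strictly smaller degree. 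The connected case is layered on top of this by an outer induction on $\dim_{\Rb}(G^\circ)-\dim_{\Rb}(H^\circ)$. Your sketch would need an equivalent finiteness mechanism, together with the chain lemmas that produce $K_1$ and $K_2$, before it becomes a proof.
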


Say that $A/B$ is an \defbold{irreducible coset space} if $B$ is maximal among proper closed subgroups of $A$.  Theorem~\ref{thm:cdi} shows that irreducible coset spaces play an important role in general coset spaces in locally compact groups, and more specifically, starting from a locally compact coset space $G/H$ where $H$ is compactly generated, the relevant irreducible coset spaces are of the form $A/B$ where $A$ is compactly generated.  We can regard irreducible coset spaces as a generalization of primitive continuous actions of topological groups, recalling that a transitive group action is primitive if and only if each point stabilizer is abstractly a maximal subgroup (not just maximal among proper closed subgroups).  This additional generality brings complications compared to the primitive case, and it should be said that even primitive continuous actions of compactly generated locally compact groups are far from classified.  However, we are able to establish some basic properties of irreducible coset spaces; see Section~\ref{sec:irreducible}.

In understanding the structure of an irreducible coset space $G/H$, there is little lost in assuming that it is a faithful $G$-space, that is, $\bigcap_{g \in G}gHg\inv = \triv$.  Write $\mc{M}_G$ for the set of $M \le G$ such that $M$ is minimal among the nontrivial closed normal subgroups of $G$.  The following theorem summarizes some of the basic features of faithful irreducible coset spaces $G/H$, in the case where $G$ is a compactly generated locally compact group.

\begin{thm}[{See Section~\ref{sec:cg_irreducible}}]\label{thm:cg_irreducible}
Let $G$ be a compactly generated locally compact group and let $H \le G$ be such that $G/H$ is a faithful irreducible coset space.  Then $G$ is second-countable and $\CC_H(N) = \triv$ for every nontrivial normal subgroup $N$ of $G$.  Moreover, exactly one of the following holds:
\begin{enumerate}[(i)]
\item There is a compact $M \in \mc{M}_G$, such that $G$ splits as a semidirect product $M \rtimes H$, and there exists a continuous injective homomorphism from $\CC_G(M)$ to $M$ with dense image.
\item We have $\mc{M}_G \neq \emptyset$, but $G$ has no nontrivial compact normal subgroups.  For every $M \in \mc{M}_G$ we have $M \cap H = \triv$, but $\CC_G(M) \neq \triv$; moreover, the coset space $(M \rtimes H)/H$ is also faithful and irreducible.
\item There is a unique $M \in \mc{M}_G$, which is the intersection of all nontrivial closed normal subgroups of $G$, and $\CC_G(M) = \triv$.
\item $\mc{M}_G = \emptyset$ and every nontrivial closed normal subgroup contains a descending chain of nontrivial discrete normal subgroups with trivial intersection.
\end{enumerate}
\end{thm}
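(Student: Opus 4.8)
The plan is to prove the three assertions in turn---the centralizer identity $\CC_H(N)=\triv$, second-countability, and then the four-way classification, which I read as a topological analogue of the O'Nan--Scott description of the socle of a primitive permutation group. Throughout, the workhorse is the following consequence of irreducibility and faithfulness: \emph{for every nontrivial closed normal subgroup $N \normal G$ one has $\overline{NH}=G$}. Indeed $\overline{NH}$ is a closed subgroup containing $H$, so by maximality it equals $H$ or $G$; and $\overline{NH}=H$ would give $N\subseteq H$, hence $N\subseteq\bigcap_{g\in G}gHg\inv=\triv$ by faithfulness, a contradiction. (One may assume $N$ closed, since passing to $\overline N$ changes neither the hypothesis nor $\CC_H(N)$.) Granting this, let $c\in\CC_H(N)$. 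For every $n\in N$ we have $n\inv cn=c\in H$, so the set $S=\{g\in G:g\inv cg\in H\}$ contains $N$; it is closed because $g\mapsto g\inv cg$ is continuous and $H$ is closed, and it satisfies $S=SH$ since $h\inv Hh=H$ for $h\in H$. Hence $S\supseteq\overline{NH}=G$, i.e.\ $c\in\bigcap_{g\in G}gHg\inv=\triv$, proving $\CC_H(N)=\triv$.

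For second-countability I would argue by contradiction. As $G$ is compactly generated it is $\sigma$-compact, so by the Kakutani--Kodaira theorem every identity neighbourhood $U$ contains a compact normal subgroup $K\subseteq U$ with $G/K$ second-countable. If $G$ were not second-countable, no such $K$ could be trivial. But a nontrivial compact normal $K$ satisfies $\overline{KH}=G$ by the paragraph above, and since $K$ is compact $KH$ is already closed, so $G=KH\subseteq UH$. As $U$ was arbitrary this forces $G=\bigcap_{U}UH=H$, contradicting that $H$ is proper. Hence $G$ is second-countable.

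For the classification I would first record the arithmetic of $\mc{M}_G$: if a nontrivial closed normal $N$ is compact then $\overline{NH}=G$ together with closedness of $NH$ gives $G=NH$; and distinct $M_1,M_2\in\mc{M}_G$ meet trivially by minimality, hence commute, so $M_2\le\CC_G(M_1)$---in particular two distinct minimal subgroups force $\CC_G(M)\neq\triv$. The split is then governed by whether $\mc{M}_G=\emptyset$ and by $\CC_G(M)$. If some $M\in\mc{M}_G$ has $\CC_G(M)=\triv$, any other minimal subgroup would lie in $\CC_G(M)$, so $M$ is unique; and for any nontrivial closed normal $N$, either $M\le N$ or $M\cap N=\triv$, the latter forcing $N\le\CC_G(M)=\triv$, so $M=\bigcap\{N\}$. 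This is case (iii).

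When instead $\CC_G(M)\neq\triv$ for every $M\in\mc{M}_G$, I would split on the existence of a nontrivial compact normal subgroup. If one exists it contains a compact $M\in\mc{M}_G$, and I aim for case (i): using $G=MH$, the maximality of $H$ shows that any $H$-invariant closed $S$ with $M\cap H\le S\le M$ has $SH\in\{H,G\}$ (here $SH$ is closed as $S$ is compact), whence $S\in\{M\cap H,M\}$; combined with $\CC_H(M)=\triv$ this yields $M\cap H=\triv$ and $G=M\rtimes H$, and the map $\CC_G(M)\to M$ is the restriction of the projection of $G=MH$ onto $M$ along $H$, injective with dense image by $\CC_H(M)=\triv$. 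If no nontrivial compact normal subgroup exists, an analogous argument with closures gives $M\cap H=\triv$, the multiplication map $M\rtimes H\to G$, $(m,h)\mapsto mh$ has dense image, and pulling back the maximality and faithfulness of $H$ along it shows $(M\rtimes H)/H$ is faithful and irreducible, which is case (ii). The remaining possibility $\mc{M}_G=\emptyset$ is case (iv), and I expect the genuine difficulty to lie here: one must show that the absence of minimal closed normal subgroups is caused precisely by infinite descent through discrete normal subgroups. The crux is a lemma asserting that a nontrivial closed normal subgroup containing no nontrivial \emph{discrete} normal subgroup of $G$ must contain a minimal one; this is the hard step, since it requires excluding compact and connected ``smallest pieces'' and showing the genuinely small normal subgroups lie in the quasi-centre $\QZ(G)$ and are hence discrete, drawing on the structure theory of compactly generated locally compact groups and the chief-factor machinery of \cite{RW-EC}. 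Granting it, every nontrivial closed normal $N$ contains a nontrivial discrete normal $D$; since $\mc{M}_G=\emptyset$, $D$ is not minimal and contains a strictly smaller nontrivial closed, hence discrete, normal subgroup, and second-countability lets me iterate and diagonalize to a descending chain with trivial intersection.
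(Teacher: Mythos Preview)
Your overall architecture matches the paper's: first $\CC_H(N)=\triv$, then second-countability, then split on whether $\mc{M}_G$ is empty and whether some $M$ has trivial centralizer. Your second-countability argument via Kakutani--Kodaira is a pleasant alternative to the paper's route through Lemma~\ref{lem:irreducible:minimal_compact}(i) and Corollary~\ref{cor:second_countable}, and your treatment of cases (iii) and (iv) is essentially what the paper does (the ``hard step'' you defer to \cite{RW-EC} is exactly Lemma~\ref{lem:minimal_normal}).

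There is, however, a genuine gap in your derivation of $M\cap H=\triv$ in cases (i) and (ii). You argue that every proper closed $H$-invariant subgroup of $M$ is contained in $M\cap H$, and then write ``combined with $\CC_H(M)=\triv$ this yields $M\cap H=\triv$''. That implication is false: take $G$ finite simple, $H$ a maximal subgroup, $M=G$. Then every proper $H$-invariant subgroup of $M$ lies in $H=M\cap H$ (its normalizer is either $H$ or $G$), and $\CC_H(M)=\Z(G)\cap H=\triv$, yet $M\cap H=H\neq\triv$. Of course this example lives in case (iii), not (i) or (ii) --- and that is the point: what distinguishes your current branch is the standing hypothesis $\CC_G(M)\neq\triv$, which you have not used. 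The correct argument (this is Lemma~\ref{lem:irreducible_free}(iii) in the paper) is: $\CC_G(M)$ is a nontrivial closed normal subgroup, so $\CC_H(\CC_G(M))=\triv$ by your first paragraph; but $M$ centralizes $\CC_G(M)$, so $M\cap H\le\CC_H(\CC_G(M))=\triv$. Equivalently, $M\cap H$ is normalized by $H$ and centralized by $\CC_G(M)$, hence normal in $\ol{\CC_G(M)H}=G$, hence trivial by minimality of $M$. Your ``analogous argument with closures'' for case (ii) inherits the same defect and is fixed the same way; note that the intermediate-subgroup analysis via compactness of $S$ is then unnecessary for establishing $M\cap H=\triv$, though it (or rather its normalizer version, valid without compactness) is still what underlies the irreducibility of $(M\rtimes H)/H$.
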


Case (i) is well-understood, including the classification of the compact groups $M$ that can occur.  It is easy to see that every group $M$ that appears as a chief factor in a compactly generated locally compact group will also appear as $M \in \mc{M}_G$ for some irreducible coset space $G/H$ satisfying case (i) or (ii) of Theorem~\ref{thm:cg_irreducible}.  It is more difficult to characterize the (necessarily nonabelian) chief factors that can occur as $M \in \mc{M}_G$ in case (iii) of the theorem; certainly there are examples where $M$ is of semisimple type, as we discuss in Section~\ref{sec:semisimple}.  Note that case (iii) is the only case in which $H$ can have nontrivial intersection with some $M \in \mc{M}_G$.  Case (iv) of the theorem is difficult to tackle further with present methods, even to find examples.  In all cases except case (i), more examples could help shed light on the general situation; of particular interest would be more examples where there is a nontrivial closed normal subgroup $N$ such that $NH < G$.

\section{Preliminaries}

\subsection{Coset spaces}

In this article we will focus on transitive actions $(X,G)$ on locally compact spaces.  To avoid some technicalities it is useful to assume that in fact $(X,G)$ is a \defbold{coset space}, that is, $G$ is a Hausdorff topological group and $X = G/H$ for some closed subgroup $H$ of $G$.  Unless otherwise specified, the action on a coset space $G/H$ is always the left translation action of $G$.  Since it makes no difference for the structure of the action, if $K$ is a closed $G$-invariant subgroup of $H$, we will identify $(G/K)/(H/K)$ with $G/H$ where there is no ambiguity in doing so.

We write $\Sub(G)$ for the set of closed subgroups of the topological group $G$; $\Sub(G)^G$ for the set of $G$-fixed points of $\Sub(G)$ under conjugation, that is, the closed normal subgroups of $G$; and $\Sub(G/H)$ for the set of $K \in \Sub(G)$ such that $K \ge H$.

Coset spaces account for most of the ``natural'' occurrences of transitive topological group actions, as the following lemma indicates.

\begin{lem}[{See \cite[Theorem~2.2.2]{BeckerKechris}}]\label{lem:transitive_topology}
Let $G$ be a Polish group acting continuously and transitively on the separable metrizable space $X$ and let $x \in X$.  Then the map $gG_x \mapsto g.x$ is a homeomorphism from $G/G_x$ to $X$.
\end{lem}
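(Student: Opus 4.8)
The plan is to dispose of the formal properties of the map $\phi\colon gG_x\mapsto g.x$ quickly and then concentrate on the one substantial point, namely that $\phi$ is open. Transitivity makes $\phi$ surjective, the definition of $G_x$ makes it injective, and since the orbit map $a\colon G\to X$, $a(g)=g.x$, is continuous and factors through the quotient $G\to G/G_x$, the universal property of the quotient topology shows $\phi$ is continuous. Because the coset projection $q\colon G\to G/G_x$ is an open surjection and $a=\phi\circ q$, openness of $\phi$ is equivalent to openness of $a$; and since $G$ acts transitively by homeomorphisms, it is enough to prove that $a$ is open at the identity, i.e.\ that for every neighbourhood $U$ of $1$ the set $U.x$ is a neighbourhood of $x$ (micro-transitivity). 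Indeed, if $a$ is open at $1$ then for open $W$ and $g\in W$ we have $gU\subseteq W$ for some neighbourhood $U$ of $1$, whence $a(W)\supseteq g.(U.x)$ is a neighbourhood of $a(g)$, so $a(W)$ is open. Everything up to this point is routine; the content is entirely in the micro-transitivity assertion, a form of the open mapping theorem for Polish group actions.

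The first genuine obstacle is that the Baire category theorem is available on $G$ but not obviously on $X$, since $X$ is only assumed separable metrizable. I would resolve this by first showing that $X$ is a Baire space. As the continuous image $a(G)$ of a Polish space, $X$ is analytic, hence has the Baire property when embedded in its completion; on the other hand $X$ is homogeneous, because the transitive action is by homeomorphisms, so if some nonempty open subset of $X$ were meagre then, by second countability, $X$ would be meagre in itself. Upgrading ``has the Baire property'' to ``is a Baire space'' for an analytic homogeneous space, thereby excluding the meagre case, is the key preliminary step, and it is exactly here that one uses that $X$ carries a transitive action of a \emph{Polish} group rather than an arbitrary topological group. (In fact the conclusion will be that $X$ is Polish, being homeomorphic to $G/G_x$.)

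Granting that $X$ is Baire, I would run the standard Pettis-type argument. Given a neighbourhood $U$ of $1$, choose a symmetric open neighbourhood $V$ of $1$ with $VV\subseteq U$; by second countability $G=\bigcup_n g_nV$ for suitable $g_n$, so $X=\bigcup_n g_n.\overline{V.x}$ is a countable union of closed sets, and the Baire property of $X$ forces some, hence (translating by a homeomorphism) $\overline{V.x}$ itself, to have nonempty interior. Combined with homogeneity this yields the following: for every point $y$ and every neighbourhood $W$ of $1$, the set $\overline{W.y}$ has nonempty interior. The hard part is to remove the closure, i.e.\ to pass from $\overline{W.x}$ having interior to $x$ lying in the interior of $U.x$; this is the genuine Effros step and the only place where \emph{completeness} of $G$ is indispensable.

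I would carry this out by fixing a complete compatible metric on $G$, choosing rapidly shrinking symmetric neighbourhoods $1\in V_1\supseteq V_2\supseteq\cdots$ with $V_{k+1}V_{k+1}\subseteq V_k$ and $V_1\subseteq U$, and then, for a target $y$ sufficiently near $x$, building a convergent infinite product $g=\lim_k g_1g_2\cdots g_k$ with $g_k\in V_k$ by repeatedly invoking the interior-nonemptiness above to correct the approximation to $y$. Completeness guarantees that the limit $g$ exists, it lies in $U$, and $g.x=y$, so $U.x$ is a neighbourhood of $x$. Assembling these steps gives micro-transitivity, hence openness of $a$ and of $\phi$, completing the proof. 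The two places I expect real work are establishing that $X$ is Baire and carrying out this Cauchy-sequence construction cleanly.
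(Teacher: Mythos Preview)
The paper does not give its own proof of this lemma; it is simply quoted from \cite[Theorem~2.2.2]{BeckerKechris} and used as a black box. So there is nothing to compare your argument against except the standard Effros-type proof in the cited reference, and your outline follows that proof faithfully: reduce to micro-transitivity of the orbit map at the identity, use a Pettis-style category argument to get $\overline{V.x}$ somewhere dense, and then exploit completeness of $G$ via a telescoping product to strip the closure.

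Your sketch is sound and you correctly identify the two genuine pressure points. One small comment on the first of these: you flag but do not actually carry out the step ruling out that $X$ is meagre in itself. The cleanest way to close this is not to argue directly that $X$ is a Baire space, but to pull the category argument back to $G$. Writing $G=\bigcup_n g_nV$ and hence $G=\bigcup_n a^{-1}\bigl(g_n.\overline{V.x}\bigr)$, the Baire property of $G$ (not of $X$) already forces some $a^{-1}\bigl(g_n.\overline{V.x}\bigr)$ to have interior in $G$; translating, $a^{-1}\bigl(\overline{V.x}\bigr)$ is a closed set with nonempty interior containing $G_x$, and one then runs the Cauchy construction inside $G$ to produce the required $g\in U$ with $g.x=y$. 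This sidesteps having to establish in advance that $X$ is Baire, and is how the argument is usually organized in the descriptive-set-theory literature. With that adjustment your plan goes through, and of course the conclusion \emph{a posteriori} shows $X\cong G/G_x$ is Polish.
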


The following is an outline of how the decomposition of coset spaces (where we consider all closed subgroups containing a given closed subgroup) includes as a special case results about closed normal series in topological groups.

\begin{prop}\label{prop:diagonal}
Let $G$ be a Hausdorff topological group, write $G^2 = G \times G$, let $\Delta$ be the diagonal subgroup $\{(g,g) \mid g \in G\}$ and let $\iota: G \rightarrow G \times G; \; g \mapsto (g,1)$.
\begin{enumerate}[(i)]
\item There is an order-preserving bijection $\theta$ from $\Sub(G)^G$ to $\Sub(G^2/\Delta)$, given as follows: given $N \in \Sub(G)^G$, the corresponding element of $\Sub(G^2/\Delta)$ is $\theta(N) = \iota(N)\Delta$, and given $H \in \Sub(G^2/\Delta)$, the corresponding element of $\Sub(G)^G$ is $\theta\inv(H) = \iota\inv(H)$.
\item Let $M, N \in \Sub(G)^G$ such that $M \ge N$.  Then as topological $G$-spaces, $M/N$ is equivalent to $\theta(M)/\theta(N)$, where $G$ acts on $M/N$ by conjugation and $g \in G$ acts on $\theta(M)/\theta(N)$ as translation on the left by $(g,g)$.
\end{enumerate}
\end{prop}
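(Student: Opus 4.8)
The plan is to route both directions of the bijection $\theta$ through two continuous maps: the homomorphism $\iota : G \to G^2$ and the \emph{non-homomorphic} map $\phi : G^2 \to G$, $(a,b) \mapsto ab\inv$. The key observation is that $\theta(N) = \iota(N)\Delta$ coincides with the preimage $\phi\inv(N)$. Indeed $\phi(\iota(n)(g,g)) = \phi(ng,g) = n \in N$, while conversely $ab\inv \in N$ forces $(a,b) = (ab\inv,1)(b,b) \in \iota(N)\Delta$. Since $\phi$ is continuous and $N$ is closed, this shows at once that $\theta(N)$ is closed; that it is a subgroup containing $\Delta$ I would check by a direct computation using $N \normal G$, the point being that because $\phi$ is not a homomorphism I cannot simply quote ``preimage of a subgroup''. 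In the other direction, $\iota$ \emph{is} a continuous homomorphism, so $\iota\inv(H)$ is automatically a closed subgroup; its normality follows from $\Delta \le H$ via $(x,x)(g,1)(x,x)\inv = (xgx\inv,1)$, which lies in $H$ whenever $(g,1)$ does.

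With both maps in hand I would verify they are mutually inverse. For $\theta\inv\theta(N) = N$ I use the equivalence $(g,1) \in \phi\inv(N) \iff g \in N$. For $\theta\theta\inv(H) = H$ I set $N = \iota\inv(H)$ and show $\phi\inv(N) = H$: the inclusion $H \subseteq \phi\inv(N)$ comes from $(a,b)(b,b)\inv = (ab\inv,1) \in H$, forcing $ab\inv \in N$, and the reverse inclusion just reverses this step. Order-preservation in both directions is immediate, since $\phi\inv$ and $\iota\inv$ respect inclusions; this finishes part (i).

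For part (ii) I would define $\Psi : M/N \to \theta(M)/\theta(N)$ by $mN \mapsto \iota(m)\theta(N)$. Well-definedness and injectivity both reduce to $(m,1) \in \theta(N) \iff m \in N$, and surjectivity uses that any coset $(a,b)\theta(N)$ with $(a,b) \in \theta(M)$ equals $(ab\inv,1)\theta(N)$, since $(b,b) \in \Delta \le \theta(N)$. Equivariance is the computation $(x,x)\iota(m)\theta(N) = (xm,x)\theta(N) = \iota(xmx\inv)\theta(N)$, the last equality holding because the two representatives differ by $(x,x) \in \Delta \le \theta(N)$; this exactly matches the conjugation action $mN \mapsto xmx\inv N$ on $M/N$. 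To upgrade this continuous equivariant bijection to a homeomorphism, I would exhibit its inverse as the map induced by $\phi$, namely $(a,b)\theta(N) \mapsto ab\inv N$, which is continuous by the universal property of the quotient topology.

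I expect the genuinely delicate points, rather than any single hard step, to be twofold and both traceable to the same source: because $\phi$ is not a homomorphism, the subgroup, normality, and well-definedness assertions cannot be read off from standard preimage lemmas and must each be checked by hand using $N \normal G$ (in particular, that $(a,b)\theta(N) \mapsto ab\inv N$ is well-defined needs normality, since the relevant elements differ by a \emph{conjugate} of an element of $N$). Relatedly, producing the homeomorphism in (ii) requires explicitly constructing the continuous inverse through $\phi$ and invoking the quotient topology, rather than appealing to any compactness or openness of the quotient maps, which are not available at this level of generality.
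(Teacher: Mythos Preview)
Your proposal is correct and follows essentially the same route as the paper: the paper's net argument for closedness of $\theta(N)$ and its claim that $\pi \circ \iota$ is a homeomorphism are precisely your map $\phi(a,b) = ab\inv$ in disguise, and the equivariance computation is identical. If anything, naming $\phi$ explicitly and recording that well-definedness of the inverse $(a,b)\theta(N) \mapsto ab\inv N$ genuinely uses normality of $N$ makes the argument slightly cleaner than the paper's version.
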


\begin{proof}
(i)
Let $N \in \Sub(G)^G$.  Since $N$ is normal in $G$, we see that $\iota(N)$ is normal in $G^2$ and hence $H := \iota(N)\Delta$ is a subgroup of $G^2$ containing $\Delta$.  Given a net $(n_ig_i,g_i)$ of elements of $H$ converging to some $(a,b) \in G^2$, we see that $g_i \rightarrow b$ and hence $n_i \rightarrow ab\inv$, so $ab\inv \in N$ and hence $(a,b) \in H$, showing that $H$ is closed and hence $H \in \Sub(G^2/\Delta)$.  Moreover, we see that $\iota\inv(H) = N$.

Conversely, let $H \in \Sub(G^2/\Delta)$ and set $N:= \iota\inv(H)$.  Given $(a,b) \in H$, we can decompose $(a,b)$ uniquely as $(ab\inv,1)(b,b)$, where $(ab\inv,1) \in \iota(G)$ and $(b,b) \in \Delta$; thus $H = \iota(N)\Delta$.  Since $H$ is a closed subgroup of $G^2$, we see that $N$ is a closed subgroup of $G$.  Given $n \in N$ and $g \in G$, then 
\[
(gng\inv,1) = (g,g)(n,1)(g\inv,g\inv) \in H,
\]
 so $gng\inv \in N$.  Thus $N \in \Sub(G)^G$.  This completes the proof that $\theta$ is a bijection from $\Sub(G)^G$ to $\Sub(G^2/\Delta)$.  It is then clear that $\theta$ is order-preserving.

(ii)
Let $\pi: G^2 \rightarrow G^2/\Delta$ be the quotient map; it is easily seen that $\pi \circ \iota$ is a homeomorphism.

We define a map $\phi: M/N \rightarrow \theta(M)/\theta(N)$ by setting $\phi(mN) = \iota(mN)\Delta$.  Given the proof of (i), it is clear that $\phi$ is continuous and bijective, and from the fact that $\pi \circ \iota$ is a homeomorphism, we see that $\phi$ is also a homeomorphism.  For the equivalence of the $G$-actions, consider $g \in G$ and $m \in M$.  Then
\[
(g,g)\iota(mN)\Delta = \iota(gmN)(1,g)\Delta = \iota(gmN)(g\inv,1)\Delta = \iota(gmg\inv N)\Delta,
\]
so left translation by $(g,g)$ on $\theta(M)/\theta(N)$ is equivalent to conjugation by $g$ on $M/N$.
\end{proof}

\subsection{Preliminaries on locally compact groups}

We note the following relationship between compactly generated locally compact groups and their finitely generated subgroups.

\begin{lem}\label{lem:comp_gen:approximation}
Let $G$ be a compactly generated locally compact group, let $U$ be a compact identity neighbourhood in $G$ and let $H$ be a subgroup of $G$ (not necessarily closed) such that $G = UH$.  Then there is a finite subset $F$ of $H$ such that
\[
G = U\grp{F}.
\]
\end{lem}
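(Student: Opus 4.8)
The plan is to combine a word-length (partial-products) argument with a covering argument. First I would fix a compact symmetric identity neighbourhood $V$ with $G = \grp{V}$ (available since $G$ is compactly generated) and arrange $U \subseteq V$; then $G = \bigcup_{n} V^n$. I would then reduce the statement to producing a finite $F \subseteq H$ with $VU \subseteq U\grp{F}$. Indeed, if this holds then, writing $S = U\grp{F}$, we have $1 \in S$, $V = V\cdot 1 \subseteq VU \subseteq S$, and $V\cdot S = (VU)\grp{F} \subseteq U\grp{F}\grp{F} = S$; an immediate induction gives $V^n \subseteq V^n S \subseteq S$ for all $n$, whence $G = \bigcup_n V^n \subseteq S \subseteq G$ and so $G = U\grp{F}$.

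Next, since $G = UH$ we have $VU \subseteq UH$, so the closed translates $\{Uh : h \in H\}$ cover the compact set $VU$, and the task becomes to extract a finite subcover. The main obstacle is precisely that these translates are closed rather than open: the open translates $\{U^\circ h\}$ need not cover $VU$, because $U^\circ H$ can be properly smaller than $UH = G$ (already for $G = \Rb$, $U = [-1,1]$, $H = 2\Zb$ one has $1 \in UH \setminus U^\circ H$). So compactness does not hand us a finite subcover directly, and we must first understand which $h$ are genuinely needed.

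To pin down the relevant $h$, I would run the partial-products argument. Choosing a (set-theoretic) section $g = \sigma(g)\eta(g)$ of $G = UH$ with $\sigma(g) \in U$, $\eta(g) \in H$ and $\sigma(1) = \eta(1) = 1$, one computes for $v \in V$ that $\eta(vg)\eta(g)\inv = \sigma(vg)\inv v\, \sigma(g) \in U\inv V U \cap H =: K$. Telescoping along a word $g = v_1 \cdots v_n$ then shows $\eta(g) \in \grp{K}$, hence $G = U\grp{K}$; in particular only the relatively compact set $K$ matters, and along the way $\ol{H}$ is seen to be compactly generated. It remains to replace $K$ by a finite set $F$. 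Here I would exploit that $H$ is dense in $L := \ol{H}$ and that $G/L$ is compact (as $G = UL$): along the fibre direction density lets the open translates $U^\circ h$ do the work, so finitely many suffice on each fibre, while compactness of $G/L$ bounds the number of transverse sheets that must be covered. Assembling these contributions yields a finite $F \subseteq H$ with $VU \subseteq U\grp{F}$, which by the first paragraph finishes the proof. The delicate point, and the step I expect to require the most care, is exactly this last one: separating cleanly the transverse contribution (controlled by compactness of $G/L$) from the longitudinal contribution (controlled by density of $H$ in $L$) so that the final cover is by translates of the \emph{given} $U$ and not of some enlargement of it.
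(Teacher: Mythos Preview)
Your reduction in the first paragraph is correct and is essentially the paper's argument: the paper takes a compact symmetric generating set $X \ni 1$, sets $Y = UX$, finds a finite $F \subseteq H$ with $Y^2 \subseteq UF$, and then observes $Y^{n}\grp{F} = U\grp{F}$ for all $n$ by induction, so $G = U\grp{F}$. Your version, with $VU$ in place of $Y^2$, is the same idea.

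The gap is in your second paragraph. You correctly isolate the key step---extracting from the closed cover $\{Uh : h \in H\}$ of the compact set $VU$ a finite subcover---but then dismiss it because the translates are closed rather than open. This is a mistake: compactness in its finite-intersection-property form handles exactly this situation. Each $Uh$ is compact, hence closed in $G$, so each $VU \setminus Uh$ is closed in the compact space $VU$; since
\[
\bigcap_{h \in H}(VU \setminus Uh) \;=\; VU \setminus UH \;=\; VU \setminus G \;=\; \emptyset,
\]
compactness yields a finite $F \subseteq H$ with $\bigcap_{f \in F}(VU \setminus Uf) = \emptyset$, i.e.\ $VU \subseteq UF \subseteq U\grp{F}$. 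Combined with your first paragraph, this finishes the proof in one line---precisely the step the paper records as ``Since $G = UH$ it follows that $Y^2 \subseteq UF$ for some finite subset $F$ of $H$.'' Your $\Rb$, $U=[-1,1]$, $H = 2\Zb$ example does show that the \emph{open} translates $U^\circ h$ need not cover, but that is irrelevant: we never needed them to.

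Consequently the entire third paragraph---the section $\sigma,\eta$, the telescoping identity, the passage to $L = \ol{H}$, and the admittedly delicate splitting into transverse and longitudinal contributions---is unnecessary machinery built to circumvent a non-obstacle. Moreover, that final step is not actually carried out: you do not explain how density of $H$ in $L$ together with compactness of $G/L$ produces a finite $F$ with $VU \subseteq U\grp{F}$ using the \emph{given} $U$, and it is not clear that your sketch can be completed without eventually invoking the same FIP argument you avoided.
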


\begin{proof}
Let $X$ be a compact symmetric generating set for $G$ containing the identity and let $Y = \overline{U}X$.  By continuity, $Y$ and $Y^2$ are compact.  Since $G = UH$ it follows that $Y^2 \subseteq UF$ for some finite subset $F$ of $H$.  Thus $Y^2\grp{F} = Y\grp{F} = U\grp{F}$, and by induction on $n$, we have $Y^n\grp{F} = U\grp{F}$ for all $n \in \Nb$.  Since $Y$ contains $X$, and $X$ generates $G$ as a semigroup, we conclude that $G = Y\grp{F}$.
\end{proof}

A locally compact group $G$ is \defbold{regionally elliptic} if every compactly generated closed subgroup of $G$ is compact.  (Equivalently, by Lemma~\ref{lem:comp_gen:approximation}, $G$ is regionally elliptic if every finite subset is contained in a compact subgroup.) The \defbold{regionally elliptic radical}, denoted by $\RadRE(G)$, is the union of all closed normal regionally elliptic subgroups of $G$. 

\begin{thm}[Platonov, \cite{Plat66}]\label{thm:platonov_radical}
For $G$ a locally compact group, $\RadRE(G)$ is the unique largest regionally elliptic closed normal subgroup of $G$, and 
\[
\RadRE(G/\RadRE(G)) = \triv.
\] 
\end{thm}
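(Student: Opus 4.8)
The plan is to isolate the closure properties of the class $\mathcal{E}$ of regionally elliptic locally compact groups and then build $\RadRE(G)$ as a directed union of closed normal members of $\mathcal{E}$, with Lemma~\ref{lem:comp_gen:approximation} as the main technical tool. Two properties are elementary: a closed subgroup of a member of $\mathcal{E}$ again lies in $\mathcal{E}$ (a compactly generated closed subgroup of the subgroup is compactly generated and closed in the ambient group), and a Hausdorff quotient of a member of $\mathcal{E}$ lies in $\mathcal{E}$ (lift a compact generating set of a compactly generated closed subgroup of the quotient to a compact set in the domain, take the closure of the subgroup it generates, which is compact, and push it forward). The second assertion of the theorem is then immediate from the extension property below: if $\ol N/\RadRE(G)$ is a closed normal regionally elliptic subgroup of $G/\RadRE(G)$, then $\ol N$ is an extension of the regionally elliptic group $\RadRE(G)$ by a regionally elliptic group, hence regionally elliptic, hence contained in $\RadRE(G)$, so $\ol N/\RadRE(G)$ is trivial.

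The heart of the argument is the extension property: if $K\normal G$ is closed with $K\in\mathcal{E}$ and $G/K\in\mathcal{E}$, then $G\in\mathcal{E}$. To prove it, let $L\le G$ be compactly generated and closed and write $q\colon G\to G/K$. Then $q(L)$ is compactly generated, so its closure is a compactly generated closed subgroup of $G/K\in\mathcal{E}$ and is therefore compact; choosing a compact $D\subseteq G$ with $q(D)=\ol{q(L)}$ and setting $L_1=\ol{\grp{L\cup D}}$, we obtain a compactly generated closed subgroup with $L_1/(L_1\cap K)\cong q(L_1)=\ol{q(L)}$ compact (the isomorphism by the open mapping theorem, $L_1$ being $\sigma$-compact). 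Now $L_1\cap K$ is a closed regionally elliptic subgroup of $L_1$ with compact quotient, so $L_1=V(L_1\cap K)$ for some compact identity neighbourhood $V$; Lemma~\ref{lem:comp_gen:approximation} furnishes a finite $F\subseteq L_1\cap K$ with $L_1=V\grp F$, and since $\ol{\grp F}$ is compact (a compactly generated closed subgroup of $K\in\mathcal{E}$) we conclude that $L_1$, and hence $L$, is compact.

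Suppose first that $G$ is totally disconnected, so that by van Dantzig's theorem $G$ has a compact open subgroup $U$. Then for any closed normal $N\in\mathcal{E}$ the set $NU$ is an open, hence closed, subgroup, and it lies in $\mathcal{E}$ by the extension property, since $N\in\mathcal{E}$ is normal in $NU$ with compact quotient $NU/N\cong U/(U\cap N)$. Consequently the family of closed normal members of $\mathcal{E}$ is directed: for two such $M,N$ the subgroup $MNU$ is open, contains the normal subgroup $M\in\mathcal{E}$ with regionally elliptic quotient, and so lies in $\mathcal{E}$, whence $\ol{MN}\le MNU$ lies in $\mathcal{E}$. Finally, writing $R=\bigcup_i N_i$ for the directed union of all closed normal members of $\mathcal{E}$ and taking a compactly generated closed $L=\grp C\le\ol R$, density of $R$ in $\ol R$ gives $C\subseteq RU=\bigcup_i N_iU$, an increasing union of open sets; compactness of $C$ forces $C\subseteq N_{i^*}U$ for some $i^*$, so $L\le N_{i^*}U\in\mathcal{E}$ is compact. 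Thus $\ol R\in\mathcal{E}$, and being closed and normal it is the unique maximal such subgroup, namely $\RadRE(G)$.

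The main obstacle is the passage from the totally disconnected case to a general locally compact $G$, equivalently the closedness of $R$ in the presence of a nontrivial connected component. The sandwiching device above relies on a compact open subgroup, which exists only when $G$ is totally disconnected; in general one must first absorb the connected part. Here I would invoke the structure theory of connected locally compact groups—every connected locally compact group is compactly generated, so its regionally elliptic closed subgroups are compact, and it possesses a largest compact normal subgroup—to reduce modulo a suitable compact normal subgroup and thereby control the interaction between $G^\circ$ and the totally disconnected quotient $G/G^\circ$. Checking that this reduction is compatible with the directed-union argument, so that $R$ remains closed and regionally elliptic, is the technical crux of the theorem.
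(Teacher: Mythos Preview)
The paper does not prove this theorem: it is quoted as a result of Platonov with a reference to \cite{Plat66}, and no argument is given in the text. So there is no ``paper's own proof'' against which to compare your proposal.

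On the merits of your write-up: the totally disconnected case is essentially correct. The extension property is the right engine, and your use of Lemma~\ref{lem:comp_gen:approximation} to pass from $L_1 = V(L_1\cap K)$ to $L_1 = V\grp{F}$ with $F$ finite, followed by the observation that $\ol{\grp{F}}$ is compact because $K$ is regionally elliptic, is exactly the intended mechanism. The directedness argument via $MNU$ and the final compactness trick $C\subseteq RU=\bigcup_i N_iU$ are also sound; note that $RU$ is a subgroup containing the open subgroup $U$, hence open, hence closed, which is what makes $\ol{R}\subseteq RU$ work.

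You are right that the general case is where the difficulty lies, and you have honestly flagged it rather than papered over it. One clean way to finish: first observe that for any closed normal regionally elliptic $N\normal G$, the group $N\cap G^\circ$ is compact (being a compactly generated closed subgroup of $N$, since $G^\circ$ is compactly generated). Hence the image of $N$ in $G/G^\circ$ is closed (the map $N\to NG^\circ/G^\circ$ is proper) and regionally elliptic. The totally disconnected case applied to $G/G^\circ$ then gives a largest closed normal regionally elliptic $\ol{R}_0/G^\circ$, and one checks via the extension property and the compactness of the relevant piece of $G^\circ$ that $\RadRE(G)$ is exactly the preimage in $\ol{R}_0$ of $\RadRE(G/G^\circ)$ intersected appropriately with the compact radical of $G^\circ$. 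Alternatively, and more in the spirit of the paper's toolkit, one may invoke Theorem~\ref{thm:yamabe_radical} to reduce modulo a compact normal subgroup to the case where $G^\circ$ is Lie, in which $G^\circ$ has a \emph{largest} compact normal subgroup and the interaction with the totally disconnected quotient is straightforward. Either route completes the argument, but as written your proposal stops short of carrying one of them out.
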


Write $G^\circ$ for the connected component of the identity in the topological group $G$.  Note that when $G$ is locally compact, the quotient $G/G^\circ$ is a totally disconnected locally compact (\tdlc) group.  We say a locally compact group $G$ is \defbold{almost connected} if $G/G^\circ$ is compact (equivalently, profinite).

A (real) \defbold{Lie group} is a topological group that is a finite-dimensional analytic manifold over $\Rb$ such that the group operations are analytic maps.  A Lie group $G$ can have any number of connected components, but $G^\circ$ is always an \emph{open} subgroup of $G$. The group $G/G^\circ$ of components is thus discrete.

We say a locally compact group $G$ is \defbold{finite-dimensional}, and define $\dim_{\Rb}(G) = \dim_{\Rb}(G^{\circ})$, if $G^\circ$ is a Lie group.

By Van Dantzig's theorem, a \tdlc group has arbitrarily small compact open subgroups.  The following is an easy consequence. 

\begin{lem}\label{lem:vD}
Let $G$ be a locally compact group.  Then $G$ has an almost connected open subgroup $U$; moreover, $U^\circ = G^\circ$, and $U$ is unique up to finite index.
\end{lem}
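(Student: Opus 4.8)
The plan is to pull back a compact open subgroup from the totally disconnected quotient $G/G^\circ$ via Van Dantzig's theorem. Write $q \colon G \to G/G^\circ$ for the quotient map. Since $G/G^\circ$ is \tdlc, Van Dantzig's theorem supplies a compact open subgroup $W \le G/G^\circ$. I would set $U := q\inv(W)$; as the preimage of an open subgroup under a continuous homomorphism, $U$ is an open subgroup of $G$, and since $W$ contains the identity coset we have $G^\circ \le U$.

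Next I would verify the two asserted properties of $U$. For the identity $U^\circ = G^\circ$: any open subgroup is also closed, so $U \cap G^\circ$ is clopen in the connected set $G^\circ$ and contains the identity, forcing $G^\circ \le U$ (as already noted); since $U$ is open, its subspace topology agrees with that inherited from $G$, so the connected set $G^\circ$ lies in $U^\circ$, while conversely $U^\circ$ is a connected subset of $G$ through the identity and hence $U^\circ \subseteq G^\circ$. This gives $U^\circ = G^\circ$. For almost connectedness, the map $q$ induces a topological isomorphism $U/G^\circ \cong W$, which is compact; thus $U/U^\circ$ is compact and $U$ is almost connected.

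Finally, for uniqueness up to finite index, I would suppose $U_1, U_2$ are both almost connected open subgroups with $U_i^\circ = G^\circ$. Their intersection $U_1 \cap U_2$ is open and contains $G^\circ$, so its image in the compact group $U_1/G^\circ$ is an open subgroup, hence of finite index; therefore $[U_1 : U_1 \cap U_2] < \infty$, and symmetrically $[U_2 : U_1 \cap U_2] < \infty$. Thus any two such subgroups are commensurable, which is the intended sense of uniqueness.

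I do not anticipate a serious obstacle: the argument is a direct application of Van Dantzig's theorem together with the elementary fact that open subgroups of a topological group contain the identity component. The only point requiring a little care is the verification of $U^\circ = G^\circ$, rather than merely $G^\circ \le U^\circ$, which rests on the fact that $U$ being open makes its subspace topology compatible with connectedness computations in $G$.
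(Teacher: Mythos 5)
Your proof is correct and follows exactly the route the paper intends: the paper gives no explicit argument, stating only that the lemma is an easy consequence of Van Dantzig's theorem, and your pullback of a compact open subgroup of $G/G^\circ$ together with the standard observations about open subgroups and identity components is precisely that consequence.
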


The Gleason--Yamabe theorem in turn relates almost connected groups to Lie groups; here is the version we will use.

\begin{thm}[Gleason--Yamabe; see {\cite[Theorem 4.6]{MZ}}]\label{thm:yamabe_radical}
Let $G$ be an almost connected locally compact group.  Then $\RadRE(G)$ is compact, and the quotient $G/\RadRE(G)$ is a Lie group with finitely many connected components.  Moreover, every identity neighbourhood in $G$ contains a compact normal subgroup $N$ such that $G/N$ is a Lie group with finitely many connected components.
\end{thm}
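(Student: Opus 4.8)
The essential content here is the solution to Hilbert's fifth problem, and I would treat the Gleason--Yamabe approximation theorem as the key external input, in the form: \emph{every locally compact group $G$ has an open subgroup $G'$ such that for each identity neighbourhood $U$ there is a compact normal subgroup $K \trianglelefteq G'$ with $K \subseteq U$ and $G'/K$ a Lie group.} Granting this, the plan is to first establish the ``Moreover'' clause --- approximation by compact subgroups that are normal in all of $G$, with Lie quotient having finitely many components --- and then to read off the two statements about $\RadRE(G)$.

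For the approximation clause, fix $U$ and take $G', K$ as above. Since $G^\circ$ is connected and $G'$ is clopen, $G' \supseteq G^\circ$, and $G'/G^\circ$ is open in the compact group $G/G^\circ$, so $[G:G']$ is finite (this is exactly where almost-connectedness enters). Replacing $G'$ by its open, finite-index normal core and $K$ by its intersection with this core, I may assume $G' \trianglelefteq G$. Now set $N = \bigcap_{g \in G} gKg\inv$; since $G' \le \N_G(K)$ this is a finite intersection of compact conjugates, hence a compact subgroup normal in $G$ with $N \subseteq K \subseteq U$. Each conjugate $K^g := gKg\inv$ is normal in $G' = gG'g\inv$ with $G'/K^g \cong G'/K$ a Lie group, so the diagonal map $G'/N \to \prod_g G'/K^g$ is a continuous injection into a Lie group; as Lie groups have \emph{no small subgroups}, continuity forces the same of $G'/N$, and Gleason's theorem then gives that $G'/N$ is a Lie group. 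Since $G'/N$ is open of finite index in $G/N$, the quotient $G/N$ is a Lie group; and being a quotient of the almost connected group $G$ it is almost connected, so its (open) component group is compact and discrete, hence finite. This proves the clause.

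To deduce the first two assertions, fix one such $N$ and write $\bar G = G/N$, a Lie group with finitely many components. The regionally elliptic radical of such a group is compact: a closed subgroup of a Lie group is Lie, a connected regionally elliptic Lie group is compact, and the component group is finite since $\bar G$ has finitely many components. I would then verify the identity $\RadRE(G)/N = \RadRE(\bar G)$. The image of $\RadRE(G)$ in $\bar G$ is regionally elliptic, giving one inclusion; conversely the preimage of $\RadRE(\bar G)$ is an extension of a regionally elliptic group by the compact --- hence regionally elliptic --- group $N$, and such an extension is again regionally elliptic (any compactly generated closed subgroup has relatively compact image, so is itself compact), whence by Theorem~\ref{thm:platonov_radical} it lies in $\RadRE(G)$, giving the reverse inclusion. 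Consequently $\RadRE(G)$ is an extension of the compact group $\RadRE(\bar G)$ by the compact group $N$, so it is compact, and $G/\RadRE(G) \cong \bar G/\RadRE(\bar G)$ is the quotient of a Lie group with finitely many components by a compact normal subgroup, hence again Lie with finitely many components.

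The genuine obstacle is the cited core itself: producing even one compact subgroup (normal in an open subgroup) with Lie quotient, i.e.\ Gleason's ``no small subgroups implies Lie'' criterion together with Yamabe's construction of one-parameter subgroups and Gleason metrics. Everything downstream is bookkeeping, the only real subtleties being the passage from normality in an open finite-index subgroup to normality in $G$ while keeping the quotient Lie (handled by the normal core together with the embedding into a finite product) and the radical identity $\RadRE(G)/N = \RadRE(G/N)$.
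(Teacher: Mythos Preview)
The paper does not supply a proof of this statement: it is quoted as a classical result with a reference to Montgomery--Zippin, and is used thereafter as a black box. So there is no paper proof to compare your proposal against.

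That said, your outline is a correct and standard derivation of the stated form from the general Gleason--Yamabe approximation theorem. The reduction from ``normal in an open subgroup'' to ``normal in $G$'' via the finite-index normal core and the diagonal embedding into a finite product of Lie quotients is exactly right, as is the use of the no-small-subgroups criterion. One small point: your justification that $\RadRE(\bar G)$ is compact (``the component group is finite since $\bar G$ has finitely many components'') is slightly glib, since a closed subgroup of a group with finitely many components can itself have infinitely many components. The cleanest fix is to note that $\RadRE(\bar G) \cap \bar G^\circ$ has finite index in $\RadRE(\bar G)$, and that a regionally elliptic closed normal subgroup of a \emph{connected} Lie group is compact (its identity component is compact as you say, and the discrete quotient is a central, locally finite subgroup of a connected Lie group modulo a compact subgroup, hence finite by structure theory, or one can simply invoke the existence of a maximal compact normal subgroup in a connected Lie group). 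This is routine, not a real gap.
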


\begin{cor}\label{cor:second_countable}
Let $G$ be a $\sigma$-compact locally compact group without arbitrarily small nontrivial compact normal subgroups.  Then $G$ is second-countable.
\end{cor}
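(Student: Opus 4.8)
The plan is to reduce the statement to a Kakutani--Kodaira-type assertion and then invoke the hypothesis to eliminate the approximating subgroup. First recall that for a $\sigma$-compact locally compact group, being second-countable is equivalent to being metrizable (Birkhoff--Kakutani), and hence to having a countable basis of identity neighbourhoods, since a metrizable $\sigma$-compact space is second-countable; as $G$ is already $\sigma$-compact, it therefore suffices to produce a countable identity-neighbourhood basis. The hypothesis unpacks as: there is an identity neighbourhood $V_0$ such that $\triv$ is the only compact normal subgroup of $G$ contained in $V_0$. Accordingly I would aim at the intermediate claim that \emph{there is a compact normal subgroup $N$ of $G$ with $N \subseteq V_0$ and $G/N$ second-countable.} Granting this, the hypothesis forces $N = \triv$, so that $G = G/N$ is second-countable, which is the assertion.

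To build such an $N$ I would run the standard controlled-neighbourhood construction. Fix an exhaustion $G = \bigcup_n C_n$ by compact sets, with each $C_n$ symmetric, and (using local compactness) choose a symmetric identity neighbourhood $V_1 \subseteq V_0$ with $\overline{V_1}$ compact. Then inductively pick symmetric identity neighbourhoods $V_1 \supseteq V_2 \supseteq \dots$ with $V_{n+1}^2 \subseteq V_n$ and, using continuity of conjugation and compactness of $C_n$, also with $x V_{n+1} x\inv \subseteq V_n$ for all $x \in C_n$. Setting $N := \bigcap_n \overline{V_n}$, the relation $V_{n+1}^2 \subseteq V_n$ makes $N$ a closed subgroup, compactness of $\overline{V_1}$ makes it compact, the conjugation condition (together with $C_n \uparrow G$) makes it normal in $G$, and $N \subseteq V_1 \subseteq V_0$ by construction. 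Finally the cosets $V_n N/N$ form a countable identity-neighbourhood basis in $G/N$, so $G/N$ is first-countable, hence metrizable, hence second-countable since it inherits $\sigma$-compactness; this is the intermediate claim. (Van Dantzig's theorem via Lemma~\ref{lem:vD} and Gleason--Yamabe via Theorem~\ref{thm:yamabe_radical} give an alternative route through an almost connected open subgroup, but they are not needed for this weaker ``metrizable quotient'' conclusion.)

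The main obstacle is the simultaneous verification that the single subgroup $N$ is at once a \emph{compact subgroup}, \emph{normal in all of $G$}, \emph{contained in $V_0$}, and has \emph{second-countable quotient}; these four demands pull in different directions, and the content of the argument lies in choosing the neighbourhoods $V_{n+1}$ small enough to enforce the conjugation control over the growing compacta $C_n$ while keeping $V_{n+1}^2 \subseteq V_n$ so that the intersection is a group. Note in particular that the naive shortcut of replacing an arbitrary small compact subgroup by its normal core $\bigcap_{g} gKg\inv$ would yield a $G$-normal subgroup inside $V_0$ (which the hypothesis then makes trivial) but would surrender all control of the quotient, so triviality of a core by itself does not deliver second-countability of $G$. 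Once the intermediate claim is in hand, the deduction from the hypothesis is a single line, so I expect the inductive neighbourhood construction, rather than the final step, to be where the real work sits.
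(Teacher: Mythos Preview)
Your argument is correct and is essentially a self-contained proof of the Kakutani--Kodaira theorem specialized to the present hypothesis; the inductive choice of the $V_n$, the verification that $N=\bigcap_n\overline{V_n}=\bigcap_n V_n$ is a compact normal subgroup contained in $V_0$, and the first-countability of $G/N$ all go through as you indicate.  The paper proceeds differently: it passes to an almost connected open subgroup $U$ via Lemma~\ref{lem:vD}, writes $G=UC$ with $C$ countable, and for each finite $F\subseteq C$ invokes Gleason--Yamabe (Theorem~\ref{thm:yamabe_radical}) to produce a compact $N_F\trianglelefteq U$ with $U/N_F$ Lie and $N_F\subseteq\bigcap_{g\in UF}gOg\inv$; the hypothesis forces $\bigcap_F N_F=\triv$, so $U$ is an inverse limit of countably many Lie groups and hence second-countable, and then $G=\bigsqcup_{c\in C}cU$ is too.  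Your route is more elementary---it avoids the deep Gleason--Yamabe structure theory entirely and needs only continuity of conjugation and compactness---whereas the paper's route leans on machinery already in place (Theorem~\ref{thm:yamabe_radical}) and, as a by-product, exhibits $U$ concretely as a pro-Lie group rather than merely a metrizable one.  You even flag this alternative in your parenthetical remark; in fact that alternative is precisely what the paper does.
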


\begin{proof}
Let $U$ be an almost connected open subgroup of $G$ and let $O$ be an identity neighbourhood containing no nontrivial compact normal subgroups.  Since $G$ is $\sigma$-compact we have $G = UC$ where $C$ is countable.  Let $F$ be a finite subset of $C$.  By Theorem~\ref{thm:yamabe_radical} there is a compact normal subgroup $N_F$ of $U$ such that $U/N_F$ is a Lie group with finitely many connected components and $N_F \subseteq \bigcap_{g \in F}gOg\inv$; since $N$ is normal in $U$, in fact
\[
N_F \subseteq \bigcap_{g \in UF}gOg\inv.
\]
The condition on $O$ ensures that the compact normal subgroups $N_F$ have trivial intersection as $F$ ranges over the finite subsets of $C$.  Thus $U$ is an inverse limit of countably many Lie groups, showing that $U$ is second-countable.  In turn, since $G$ is partitioned into the countably many left cosets of $U$, we conclude that $G$ is second-countable.
\end{proof}

The set $\Sub(G)$ of closed subgroups of a locally compact group $G$ carries a natural compact Hausdorff topology, the \defbold{Chabauty topology}, with basic open neighbourhoods of $H \in \Sub(G)$ as follows, where $K$ ranges over the compact subsets and $U$ ranges over the nonempty open subsets of $G$ (see for example \cite[Chapitre 8, \S5]{Bourbaki}):
\[
V_{K,U}(H) := \{J \in \Sub(G) \mid J \cap K \subseteq HU \text{ and } H \cap K \subseteq JU\}.
\]
We will not need to use the Chabauty topology directly in this article, but it is useful to have in mind as another source of intuition on coset spaces in locally compact groups.

The Chabauty space of a locally compact group is also a natural example of an ordered topological space, with the partial order given by inclusion of subgroups.  Our investigation in the present article of decompositions of the coset space $G/H$ can also be interpreted as structure theory of the closed up-set $\Sub(G/H) := \{K \in \Sub(G) \mid K \ge H\}$, which is then itself a compact ordered topological space.

\section{The $\tCDI$ factorization of locally compact coset spaces}

\subsection{Thickenings}

\begin{defn}
Let $G$ be a group, let $O$ and $S$ be subsets and let $H$ be a subgroup of $G$.  We say $H$ is \defbold{$(O,S)$-reduced} if $H = \grp{S,H \cap O}$.

Given a subgroup $U$ of $G$, the \defbold{$U$-thickening} of $H$ is the set
\[
T_U(H) = \bigcap_{h \in H} h(UH \cap HU)h\inv.
\]
We say $H$ is \defbold{$U$-thick} if $T_U(H) = H$.
\end{defn}

\begin{lem}[See {\cite[Corollary~1.20]{HilgertNeeb}}]\label{lem:compact_submonoid}
Let $G$ be a Hausdorff topological group, let $M$ be a closed submonoid of $G$ and let $N = M \cap M\inv$.  Suppose that $M/N$ is compact.  Then $M=N$ and $M$ is a subgroup of $G$.
\end{lem}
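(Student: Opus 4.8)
The plan is to reduce the statement to showing that for every $s \in M$ one has $s\inv \in M$, and to obtain this from the classical fact that a compact subsemigroup of a topological group is a group, transported to the present non-compact setting.

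First I would record the elementary structure. Since $N = M \cap M\inv$ is the unit group of the monoid $M$, it is a subgroup, and it is closed as the intersection of the closed sets $M$ and $M\inv$. Because $N \subseteq M$ we have $MN = M$, so $M$ is a union of right cosets of $N$ and $X := M/N$ is a compact Hausdorff space; as $N$ is closed, $X$ is realised $G$-equivariantly as a compact subset of the homogeneous space $G/N$, on which all of $G$ — in particular the elements $s\inv$ — acts by homeomorphisms. For $s \in M$ the left translation $\lambda_s \colon X \to X$, $aN \mapsto saN$, is continuous and injective (from $saN = sa'N$ one gets $aN = a'N$). The desired conclusion $M = N$ is then equivalent to the assertion that each $\lambda_s$ is surjective: indeed $\lambda_s$ is surjective iff $sM = M$ iff $s\inv \in M$, and $M = N$ exactly when every element of $M$ is invertible in $M$.

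The engine is the compact-case statement: if $T$ is a compact subsemigroup of a topological group, then $\overline{\{t^n : n \ge 1\}} \subseteq T$ is a compact (abelian) topological semigroup, hence contains an idempotent, which in a group must be $1$; taking a net $t^{n_\alpha} \to 1$ with $n_\alpha \ge 1$ gives $t^{n_\alpha - 1} \to t\inv$ inside the closed set $T$, so $t\inv \in T$. To run this with only $M/N$ compact I would use a lifting principle: if $a_iN \to bN$ in $X$ with $a_i, b \in M$, then translating by $b\inv$ in $G/N$ and using that $G \to G/N$ is open produces $\nu_i \in N$ with (a subnet of) $a_i\nu_i \to b$ in $G$. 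Combined with the closedness of $M$, this lets me ``peel off'' one factor $s$ from any limit relation of the form $s^k \cdot (\text{coset}) \to (\text{coset})$ and conclude that the peeled element lies in $M$.

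Fixing $s$ and writing $f = \lambda_s$, I would form the Ellis semigroup $E = \overline{\{f^n : n \ge 0\}} \subseteq X^X$, a compact right-topological semigroup, and pick a minimal idempotent $u \in E$. Then $\eta := u(\xi)$ (with $\xi = N$ the base point) is a recurrent point and $s^{n_\alpha}\xi \to \eta$; writing $\eta = bN$ and lifting gives $s^{n_\alpha}\mu_\alpha \to b$, and peeling one factor $s$ yields $s\inv b \in M$, and more generally $s^{-j}b \in M$ for all $j \ge 0$. Equivalently, the eventual range $M_\infty := \bigcap_{n} s^n M$ is a nonempty closed set, invariant and reversible under $\lambda_s$ (so $sM_\infty = M_\infty$), with $M_\infty/N$ compact. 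The main obstacle is that all of this only controls $\lambda_s$ on $M_\infty$ — it produces statements of the form ``$s\inv \cdot(\text{something in }M) \in M$'' — without placing the identity coset $\xi$ into $M_\infty/N$. The crux, and the step I expect to be hardest, is precisely to show $\xi \in M_\infty/N$, i.e.\ that $\lambda_s$ is surjective; here the possibly non-compact unit group $N$ prevents extracting a convergent subnet of $(s^n)$ directly in $G$, and one must use the closedness of $M$ together with the proximality of $\xi$ and $\eta = u(\xi)$ — transporting the recurrence of $\eta$ back to the base point by an inverse translation in $G/N$ — to force $\eta = \xi$ and hence recover $s\inv \in M$.
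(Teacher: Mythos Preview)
The paper gives no proof of this lemma; it is simply quoted from Hilgert--Neeb. So there is nothing to compare your approach against except the bare statement.

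Your write-up is careful up to the point you yourself flag as the crux, but the gap there is real and your proposed fix does not close it. You correctly arrive at an element $b \in M$ with $s^{-k}b \in M$ for all $k \ge 0$, equivalently $bN \in M_\infty/N$, and you correctly observe that the whole problem is to get the base coset $\xi = N$ into $M_\infty/N$. Your plan is to use that the idempotent $u$ makes $\xi$ and $\eta = u(\xi) = bN$ proximal. Unwinding this: since $u(\eta)=\eta=u(\xi)$, after passing to a common subnet and lifting you obtain $s^{n_\alpha}\nu_\alpha \to b$ and $s^{n_\alpha}b\mu_\alpha \to b$ with $\nu_\alpha,\mu_\alpha \in N$, whence
\[
\nu_\alpha^{-1} b \mu_\alpha = (s^{n_\alpha}\nu_\alpha)^{-1}(s^{n_\alpha}b\mu_\alpha) \to 1.
\]
But this only says $1 \in \overline{NbN}$, and since $N$ need not be normal in $G$, that does \emph{not} force $b \in N$. (Concretely, in $G = \mathrm{SL}_2(\mathbb{R})$ with $N$ the upper unipotents, $NbN$ is dense for any $b \notin N$, so $1 \in \overline{NbN}$ is automatic.) The ``inverse translation in $G/N$'' you allude to runs into the same wall: from $\nu_\alpha^{-1}b\mu_\alpha \to 1$ you get $(\nu_\alpha^{-1}b)N \to N$, but the left factor $\nu_\alpha^{-1}$ varies with $\alpha$ and does not fix the coset $bN$, so you cannot conclude $bN = N$.

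In short, the Ellis-semigroup and minimal-set machinery you set up gives exactly the statement ``$\lambda_s$ is bijective on its eventual range $M_\infty/N$'', and nothing in your outline bridges from that to ``$\lambda_s$ is surjective on all of $M/N$''. That bridge is the entire content of the lemma, so as written the proposal is incomplete at the essential step rather than at a technicality.
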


\begin{prop}\label{prop:thick}
Let $G$ be a Hausdorff topological group, let $H$ be a closed subgroup of $G$ and let $U$ be a subgroup of $G$ such that $UH/H$ is compact.  Write $T = T_{U}(H)$.
\begin{enumerate}[(i)]
\item We have
\[
T = \bigcap_{h \in H}hUH = \bigcap_{h \in H}HUh.
\]
\item $T$ is the largest closed subgroup of $G$ such that $H \le T \subseteq UH$.
\item $T$ is $U$-thick.
\item Let $K$ be a closed subgroup of $G$ such that $H \le K$ and let $V = U \cap K$.  Then $T_V(H) = T_U(H) \cap K$.
\item Let $S$ be a subset of $G$.  If $H$ is $(U,S)$-reduced, then so is $T$.
\end{enumerate}
\end{prop}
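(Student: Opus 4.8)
The plan is to prove (i) and (ii) together, since both hinge on identifying $T$ as a group via the compact-submonoid lemma, and then to derive (iii)--(v) as essentially formal consequences. First I would simplify the conjugates in the definition of $T$: since each $h \in H$ satisfies $hH = H = Hh\inv$, one gets $h(UH)h\inv = hUH$ and $h(HU)h\inv = HUh\inv$, so distributing the conjugation over the intersection yields
\[
T = \Big(\bigcap_{h \in H} hUH\Big) \cap \Big(\bigcap_{h \in H} HUh\Big).
\]
Writing $A = \bigcap_{h} hUH$ and $B = \bigcap_h HUh$, we have $T = A \cap B$, and taking inverses and reindexing $h \mapsto h\inv$ gives $B = A\inv$. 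The core of the argument is to show that $A$ is in fact a group.

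I would verify three facts about $A$. First, $H \le A$ and $HA = AH = A$, by reindexing the intersection under left/right translation by elements of $H$. Second, $A$ is a \emph{submonoid}: given $x,y\in A$ and $h\in H$, write $x = huh_1$ with $u\in U$, $h_1\in H$; then $h_1y\in HA = A \subseteq UH$, say $h_1y = u'h_2$, so $xy = h(uu')h_2 \in hUH$ (using $UU=U$), and as $h$ was arbitrary, $xy\in A$. Third, $A$ is \emph{closed}, because $UH$ is the preimage under $G\to G/H$ of the set $UH/H$, which is compact and hence closed as $G/H$ is Hausdorff, so $A$ is an intersection of closed sets. With $A$ a closed submonoid and $N := A\cap A\inv$ the associated subgroup, the decisive step is the compactness of $A/N$: since $H \le N$, it suffices that $A/H$ be compact, and this holds because $A/H$ is a closed subset of the compact space $UH/H$. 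Lemma~\ref{lem:compact_submonoid} then forces $A = N = A\inv$, whence $A = B = T$, proving (i); in particular $T$ is a closed subgroup with $H \le T \subseteq UH$. For the maximality asserted in (ii), any closed subgroup $K$ with $H \le K \subseteq UH$ satisfies $K = hK \subseteq hUH$ for every $h\in H$, so $K \subseteq \bigcap_h hUH = T$. I expect the submonoid verification together with the correct assembly of the compactness hypothesis for Lemma~\ref{lem:compact_submonoid} to be the main obstacle; everything else is bookkeeping.

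The remaining parts follow from (i) and (ii). For (iii), note $UT = UH$ (immediate from $H \le T \subseteq UH$ and $UU=U$), so $UT/T = UH/T$ is compact, and (ii) applied to the pair $(T,U)$ describes $T_U(T)$ as the largest closed subgroup between $T$ and $UT = UH$; this must be $T$ by the maximality already proved for $(H,U)$. For (iv), a direct set computation gives $VH = UH \cap K$ (if $x = uh \in K$ then $u = xh\inv \in U\cap K = V$), so $VH/H = (UH/H)\cap(K/H)$ is compact; the inclusion $T_V(H) \subseteq T_U(H)\cap K$ is immediate from $V \subseteq U$ and $hVH \subseteq K$, while the reverse inclusion holds because $T\cap K$ is a closed subgroup with $H \le T\cap K \subseteq VH$ and hence lies in the maximal such subgroup $T_V(H)$. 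Finally, for (v) I would exploit $T \subseteq UH$: any $t \in T$ factors as $t = uh$ with $u\in U$, $h\in H$, and then $u = th\inv \in T\cap U$ while $h \in H = \grp{S, H\cap U} \subseteq \grp{S, T\cap U}$, so $t \in \grp{S, T\cap U}$; the reverse inclusion $\grp{S,T\cap U}\subseteq T$ is clear since $S\subseteq H\subseteq T$, giving $T = \grp{S, T\cap U}$.
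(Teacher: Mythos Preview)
Your proof is correct and follows essentially the same strategy as the paper: both reduce (i)--(ii) to showing that $A=\bigcap_{h\in H} hUH$ is a closed submonoid with $A/H$ compact, then invoke Lemma~\ref{lem:compact_submonoid} to conclude $A=A\inv=T$; parts (iii)--(v) are then derived in the same formal way. The only cosmetic difference is that the paper packages the submonoid check by defining $W$ as the stabilizer of the family $\{HUh : h\in H\}$ under right multiplication (making closure under products automatic), whereas you verify it by a direct element chase.
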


\begin{proof}
Let $\mc{W} = \{HUh \mid h \in H\}$ and let 
\[
W = \{g \in G \mid Y \in \mc{W} \Rightarrow Yg \in \mc{W}\}.
\]
We see that $1 \in H \subseteq W$ and $W$ is closed under multiplication; thus $W$ is a submonoid of $G$.

We claim next that $W$ is a closed subgroup of $G$; by Lemma~\ref{lem:compact_submonoid} it is enough to show that $W$ is closed and $W/H$ is compact.  More specifically, it will suffice to show that 
\[
W = \bigcap_{h \in H}hUH;
\]
Let $g \in W$.  Then $HUg \in \mc{W}$, so $HUg = HUh\inv$ for some $h \in H$, and hence $HU = HU h\inv g\inv$.  In particular, since $1 \in HUh\inv$, we have $g\inv \in HU$, or in other words $g \in UH$.  Thus $W \subseteq UH$.  In fact, since $W$ is a monoid containing the group $H$, it follows that $W \subseteq hUH$ for all $h \in H$.  Conversely, let $g \in G$ be such that $g \in hUH$ for all $h \in H$.  Given $h \in H$, we can write $g = h\inv uk$ for some $u \in U$ and $k \in H$.  It follows that
\[
HUh g = HUuk = HUk \in \mc{W};
\]
thus $Y \in \mc{W} \Rightarrow Yg \in \mc{W}$.  Thus $g \in W$, proving that $W$ is the claimed intersection.

Part (i) now follows by observing that
\[
T = \bigcap_{h \in H} h(UH \cap HU)h\inv = \bigcap_{h \in H}(hUH \cap HUh\inv) = W \cap W\inv = W.
\]

We have just seen that $T$ is a closed subgroup of $G$.  Consider a subgroup $H'$ of $G$ such that $H \le H' \subseteq UH$.  Then $H'$ is invariant under left translation by $h$, so $H' \subseteq hUH$ for all $h \in H$.  Thus $H' \le T$; this completes the proof of (ii).

We observe that $UT = UH$, so $T$ is the largest closed subgroup of $G$ such that $H \le T \subseteq UT$.  Thus $T$ is $U$-thick, proving (iii).

For (iv), let $T' = T_{U}(H) \cap K$.  Clearly $T_{V}(H)$ is contained in both $T_{U}(H)$ and $K$, so $T_{V}(H) \le T'$.  On the other hand, we see that $T'$ is a closed subgroup of $G$ such that $H \le T' \subseteq UH \cap K$: since $H \le K$, in fact $UH \cap K = (U \cap H)K = VK$.  Since $T_V(H)$ is the largest closed subgroup between $H$ and $VH$, it follows that $T' \le T_{V}(H)$, and hence equality holds.

For (v), let $S$ be a subset of $G$ and suppose that $H$ is $(U,S)$-reduced.  Since $H \le T \subseteq UH$, we have $T = (U \cap T)H$; hence
\[
T = \grp{U \cap T,H} = \grp{U \cap T,U \cap H,S} = \grp{U \cap T, S},  
\]
showing that $T$ is $(U,S)$-reduced.
\end{proof}

Here is a more concrete family of examples to illustrate the concept.

\begin{example}
Let $\mc{T}$ be a locally finite regular tree, let $G = \Aut(\mc{T})$ and for each vertex $v \in V\mc{T}$ and $k \ge 0$, let $G_{v,k}$ be the pointwise fixator of the ball of radius $k$ around $v$.  Let $H$ be a closed subgroup of $G$.  The \defbold{$\propP{k}$-closure} $H^{\propP{k}}$ of $H$ consists of all $g \in G$ such that, for each $v \in V\mc{T}$, there is some $h \in H$ with the same action as $g$ on the $k$-ball around $v$, that is, such that $gG_{v,k} = hG_{v,k}$.  The $\propP{0}$-closure is just the largest subgroup of $G$ that stabilizes setwise each $H$-orbit on vertices; for $k \ge 1$ then $H^{\propP{k}}$ is the smallest closed subgroup of $G$ containing $H$ that has property $\propP{k}$.  (See \cite{BanksElderWillis} for more information about these properties.)  One sees that we have a descending chain
\[
H^{\propP{0}} \ge H^{\propP{1}} \ge \dots
\]
of closed subgroups of $G$, with intersection $H$.

Now choose a set $A$ of representatives of the $H$-orbits on $V\mc{T}$ and fix $k \ge 0$.  For all vertices $v$ of $\mc{T}$ it is clear that $H^{\propP{k}}$ is a subgroup containing $H$ and contained in $HG_{v,k}$, so $H^{\propP{k}} \le T_{G_{v,k}}(H)$.  On the other hand, let $L = \bigcap_{v \in A} T_{G_{v,k}}(H)$ and consider $g \in L$ and $v \in V\mc{T}$.  Then there is some $h \in H$ such that $hv \in A$; from the definition of $L$, we see that $hgh\inv \in HG_{hv,k}$.  After conjugating by $h\inv$ we have $g \in HG_{v,k}$, showing that $g \in H^{\propP{k}}$.  Thus
\[
H^{\propP{k}}  = \bigcap_{v \in A} T_{G_{v,k}}(H).
\]
In particular, if $H$ is vertex-transitive, then $H$ is $G_{v,k}$-thick if and only if $H$ has property $\propP{k}$.
\end{example}

We note that $U$-thickness implies a certain dynamical property on the coset space.

\begin{defn}
Let $G$ be a group acting on a uniform space $(X,\ms{U})$.  We say the action is \defbold{expansive} at $x \in X$ there is an entourage $E_x \in \ms{U}$ such that if $(gx,gy) \in E_x$ for all $g \in G$, then $x=y$.  The action is \defbold{expansive} on $X$ if such an entourage $E_x$ exists for all $x \in X$, and \defbold{uniformly expansive} if a single entourage $E = E_x$ can be chosen for all $x \in X$.

Now suppose that $G$ is a Hausdorff topological group and let $H$ be a closed subgroup of $G$.  We equip $G/H$ with the right uniformity: basic entourages in $G/H$ are sets of the form
\[
E_U = \{ (xH,yH) \in G/H \times G/H \mid xH \subseteq UyH \},
\]
where $U$ is a symmetric identity neighbourhood in $G$.
\end{defn}

\begin{prop}\label{prop:coexpansive}
Let $G$ be a Hausdorff topological group and let $H$ be a closed subgroup of $G$.
\begin{enumerate}[(i)]
\item Suppose that $H$ is $U$-thick for some open subgroup $U$ such that $UH/H$ is compact.  Then $G$ is uniformly expansive on $G/H$.
\item Suppose that $G$ is a \tdlc group, $G$ is expansive at the trivial coset on $G/H$ and $G/H$ is compact.  Then $H$ is $U$-thick for some compact open subgroup $U$ of $G$.
\end{enumerate}
\end{prop}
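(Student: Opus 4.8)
The common thread in both parts is a dictionary between the entourage $E_U$ and the thickening $T_U(H)$, and the plan is to set this up first. On the dynamical side, since $G/H$ is homogeneous and the definition of expansiveness quantifies over all of $G$, I would substitute $g = g'a\inv$ to rewrite the pair $(gaH,gbH)$ as $(g'H,g'cH)$ with $c = a\inv b$, and then compute directly that $(g'H,g'cH)\in E_U$ if and only if $g'(cH)g'\inv\cap U\neq\emptyset$, equivalently $c\in g'\inv Ug'H$. Hence the expansiveness hypothesis for the pair $(aH,bH)$ says precisely that $a\inv b\in\bigcap_{g\in G}g\inv UgH$. On the thickening side, Proposition~\ref{prop:thick}(i) (valid whenever $UH/H$ is compact) gives $T_U(H)=\bigcap_{h\in H}hUH$, and unwinding the membership condition yields the characterization that $t\in T_U(H)$ if and only if $htH\cap U\neq\emptyset$ for every $h\in H$.

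For part (i) I would take the single entourage $E=E_U$, which is legitimate because the open subgroup $U$ is a symmetric identity neighbourhood. By the dictionary, the hypothesis for a pair $(aH,bH)$ reads $a\inv b\in\bigcap_{g\in G}g\inv UgH$. Intersecting over $H$ rather than over all of $G$ only enlarges the set, so $\bigcap_{g\in G}g\inv UgH\subseteq\bigcap_{h\in H}h\inv UhH=\bigcap_{h\in H}hUH=T_U(H)$, and this last set equals $H$ by $U$-thickness. Thus $a\inv b\in H$, so $aH=bH$; since $E_U$ does not depend on the base point, this is exactly uniform expansiveness.

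Part (ii) is the substantive direction, and its difficulty is precisely the reverse passage: expansiveness supplies a separating element in $G$, whereas $U$-thickness demands a separating element in $H$. After shrinking the entourage to a basic one, I may assume there is a symmetric open identity neighbourhood $V$ such that for every $c\notin H$ some $g\in G$ satisfies $g(cH)g\inv\cap V=\emptyset$; I must manufacture a compact open subgroup $U$ for which, for every $t\notin H$, some $h\in H$ satisfies $htH\cap U=\emptyset$. The bridge from $G$-conjugation to $H$-translation is where compactness of $G/H$ enters. The plan is: write $G=KH$ with $K\subseteq G$ compact; by continuity of conjugation together with compactness of $K$, choose an identity neighbourhood $W$ with $kWk\inv\subseteq V$ for all $k\in K$; and by Van Dantzig's theorem choose a compact open subgroup $U\subseteq W$.

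Finally I would verify $T_U(H)=H$. Given $t\notin H$, expansiveness yields $g$ with $g(tH)g\inv\cap V=\emptyset$. Writing $g=kh$ with $k\in K$ and $h\in H$, and using $h(tH)h\inv=htH$, this becomes $k(htH)k\inv\cap V=\emptyset$, that is $htH\cap k\inv Vk=\emptyset$; since $U\subseteq W\subseteq k\inv Vk$, we obtain $htH\cap U=\emptyset$ with $h\in H$. By the characterization above, $t\notin T_U(H)$, so $T_U(H)\subseteq H$ and hence $T_U(H)=H$ (note that $UH/H$ is compact, so Proposition~\ref{prop:thick}(i) is available throughout). I expect the conjugation-to-translation step to be the only real obstacle; once the compact transversal $K$ and the uniform conjugation bound $W$ are in place, the rest is a routine unwinding of definitions.
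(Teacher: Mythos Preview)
Your proof is correct and follows essentially the same strategy as the paper. Part (i) is the same argument, just with the homogeneity reduction done first rather than last. In part (ii) there is a minor variation in how compactness of $G/H$ is exploited: the paper takes $V$ to be a compact open subgroup, uses that $G$ then has only finitely many $(V,H)$-double cosets $Vg_1H,\dots,Vg_nH$, and sets $U=\bigcap_{i=1}^n g_i^{-1}Vg_i$; you instead choose a compact set $K$ with $G=KH$ and invoke the tube lemma to shrink $V$ uniformly over $K$-conjugation before passing to a compact open subgroup via Van Dantzig. Both devices accomplish the same ``$G$-separation to $H$-separation'' step, and neither is more general than the other here; the paper's finite double coset version is slightly more concrete, while your tube-lemma version would adapt more readily if $V$ were not assumed to be a subgroup.
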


\begin{proof}
(i)
Given Proposition~\ref{prop:thick}(i), we have
\[
H = \bigcap_{h \in H}hUH.
\]
Consider now $y \in G \setminus H$.  Then there exists $h \in H$ such that $y \not\in hUH$; equivalently, $Uh\inv yH \neq Uh\inv H$.  Thus $(h\inv H,h\inv yH) \not\in E_U$.  Thus $E_U$ is an entourage of expansivity for the action of $H$ on $G/H$ at the trivial coset.

Now consider an arbitrary pair of distinct points $xH,yH \in G/H$.  Then we can write $y = xz$ where $z \in G \setminus H$, and there is $h \in H$ such that $(hH,hzH) \not\in E_U$.  We can rewrite $(hH,hzH)$ as
\[
(hH,hzH) = ((hx\inv)xH,(hx\inv)yH);
\]
since $hx\inv \in G$, this shows that $E_U$ is an entourage of expansivity for the action of $G$ on $G/H$ at all points, so the action is uniformly expansive.

(ii)
We suppose that $E_V$ is an entourage of expansivity for the action of $G$ on $G/H$ at the trivial coset, where $V$ is a compact open subgroup of $G$.  Since $G/H$ is compact, there are only finitely many $(V,H)$-double cosets $Vg_1H,\dots,Vg_nH$ in $G$.  Given $y \in G \setminus H$ there is some $g_y \in G$ such that $Vg_yH \neq Vg_yyH$.  We are free to replace $g_y$ with any element of the coset $Vg_y$, so we may take $g_y = g_{i(y)}h_y$ for some $h_y \in H$ and $1 \le i \le n$.  We now have 
\[
UH = Uh_yH \neq Uh_yyH,
\]
where $U = \bigcap^n_{i=1}g\inv_i Vg_i$; note that $U$ is a compact open subgroup of $G$ that does not depend on $y$.  In particular, $h_yy \not\in UH$, so $y \not\in h\inv_y UH$.  Since $y \in G \setminus H$ was arbitrary, we conclude that
\[
H = \bigcap_{h \in H}hUH,
\]
so $H$ is $U$-thick.
\end{proof}

Proposition~\ref{prop:coexpansive}(ii) fails if we do not assume $G/H$ is compact.  Consider for example the case $H=\triv$: the left translation action of $G$ on itself is expansive if and only if there is an identity neighbourhood $U$ such that $\bigcap_{g \in G}gUg\inv = \triv$.  (Conjugation arises here because we are considering \emph{left} translation with respect to the \emph{right} uniformity.)  So if $G$ is a nondiscrete topologically simple group, then $G$ acts expansively on $G/\triv$, but clearly $\triv$ is not $U$-thick for any open subgroup $U$ of $G$.

\subsection{Thin factors}

We now introduce sharper versions of types $\tC$ and $\tD$.

\begin{defn}
Let $G$ be a Hausdorff topological group, let $H$ be a closed subgroup of $G$ and let $O$ and $S$ be subsets of $G$ such that $OH/H$ is a compact neighbourhood of the identity in $G/H$.  With respect to $(O,S)$, we say that a $G$-factor $\pi: G/H \rightarrow G/K$ (or $K/H$) is
\begin{itemize}
\item Type $\tC_O$ if $K \subseteq OH$.
\item Type $\tD_O$ if $K \cap O = H \cap O$.
\item Type $\tC_{O,S}$ if $O$ is a subgroup of $G$, $K = T_O(H)$ and $H$ is $(O,S)$-reduced.
\item Type $\tD_{O,S}$ if $K \cap O = H \cap O$ and $H$ is $(O,S)$-reduced.
\item Type $\tI_{O,S}$ if $H$ is maximal among proper closed subgroups of $K$ and $K$ is $(O,S)$-reduced.
\end{itemize}
A factorization of a $G$-space is a \defbold{$\tCDI_{O}$ factorization} if each $G$-factor in the sequence is of type $\tC_O$, $\tD_O$ or $\tI$ for some fixed $O$, and a \defbold{$\tCDI_{O,S}$ factorization} if each $G$-factor in the sequence is of type $\tC_{O,S}$, $\tD_{O,S}$ or $\tI_{O,S}$, for some fixed $O$ and $S$.
\end{defn}

The following observations are all easily verified.

\begin{lem}
Let $G/H$ be a coset space, let $O$ be a subset of $G$ such that $OH/H$ is a compact neighbourhood of the identity $G/H$, and let $S$ be a symmetric subset $S$ of $H$ containing the identity.  Let $K$ be a closed subgroup of $G$ containing $H$.
\begin{enumerate}[(i)]
\item If $K/H$ is of type $\tC_O$, then it is of type $\tC$.
\item If $K/H$ is of type $\tD_O$, then it is of type $\tD$.
\item If $K/H$ is of types $\tC_O$ and $\tD_O$, then $K=H$.
\end{enumerate}
\end{lem}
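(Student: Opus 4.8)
The plan is to translate each ``sharpened'' hypothesis into the corresponding plain type via the dictionary recorded in the introduction: $K/H$ is of type $\tC$ exactly when $H$ is cocompact in $K$ (that is, $K/H$ is compact), and of type $\tD$ exactly when $H$ is open in $K$. The symmetric set $S \subseteq H$ plays no role in this lemma, so I would simply ignore it. Before treating the three parts I would record two consequences of the standing hypothesis on $O$. First, $OH/H$ is compact. Second, writing $q\colon G \to G/H$ for the (open, continuous) quotient map, the fact that $OH/H$ is a \emph{neighbourhood} of the trivial coset yields an open set $W$ of $G$ with $1 \in W$ and $W \subseteq q\inv(OH/H) = OH$. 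I would also note once that $K/H$ is closed in $G/H$, since $q\inv(K/H) = KH = K$ is closed and $q$ is a quotient map.

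For (i), the hypothesis $K \subseteq OH$ gives $K/H \subseteq OH/H$. As $K/H$ is a closed subset of the compact set $OH/H$, it is compact; hence $H$ is cocompact in $K$ and $K/H$ is of type $\tC$.

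For (ii), I would use the open set $W$ with $1 \in W \subseteq OH$. Given any $k \in K \cap W$, I can write $k = oh$ with $o \in O$ and $h \in H$, because $k \in W \subseteq OH$. Since $h \in H \le K$ and $k \in K$, we get $o = kh\inv \in K \cap O$, and the hypothesis $K \cap O = H \cap O$ forces $o \in H$; hence $k = oh \in H$. Thus $K \cap W \subseteq H$, so the subgroup $H$ of $K$ contains the identity neighbourhood $K \cap W$ of $K$ and is therefore open in $K$. So $K/H$ is of type $\tD$.

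For (iii), I would argue directly from both hypotheses, without invoking (i) or (ii). Given an arbitrary $k \in K$, now $k \in K \subseteq OH$, so I again write $k = oh$ with $o \in O$, $h \in H$; exactly as in (ii), $o = kh\inv \in K \cap O = H \cap O \subseteq H$, so $k \in H$. Hence $K \subseteq H$ and therefore $K = H$. There is no substantive obstacle in any of this: the single recurring idea is the factorization $k = oh \Rightarrow o \in K \cap O$, and the only care required is the point-set bookkeeping in the first paragraph — in particular, extracting the genuinely open $W$ inside $OH$ is what makes (ii) work with only the local hypothesis $K \cap O = H \cap O$, whereas (iii) instead exploits the global containment $K \subseteq OH$.
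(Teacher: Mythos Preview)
Your proof is correct in all three parts. The paper does not actually supply a proof of this lemma: it simply states that ``the following observations are all easily verified'' and moves on. Your argument is exactly the natural verification one would give, using the dictionary between the coset-space formulations of types $\tC$ and $\tD$ and the general definitions, together with the factorization trick $k = oh \Rightarrow o = kh\inv \in K \cap O$.
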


\begin{defn}\label{defn:thin}
Let $G/H$ be a locally compact coset space, let $U \le G$ and let $K$ be a closed subgroup of $G$ containing $H$.  Say that $K$ is \defbold{$U$-thin over $H$} (or that $K/H$ or the $G$-factor $G/H \rightarrow G/K$ is \defbold{$U$-thin}) if $(K \cap U)H$ is a closed subgroup of $G$.
\end{defn}

In particular, if $U$ is a subgroup of $G$ such that $UH/H$ is compact and open in $G/H$, and $K/H$ is $U$-thin, then $K/H$ factorizes into a factor $(K \cap U)H/H$ of type $\tC_U$ and a factor $K/(K \cap U)H$ of type $\tD_U$.  Conversely, if $K/H$ decomposes into a factor $L/H$ of type $\tC_U$ and a factor $K/L$ of type $\tD_U$, we see immediately that $L$ must be $(K \cap U)H$, so $K/H$ is $U$-thin.

The motivation for Definition~\ref{defn:thin} is that we have a criterion for $K/H$ to be $U$-thin, given in the next lemma, that can be leveraged (along with compactness) to obtain chain conditions in $\Sub(G/H)$.

\begin{lem}\label{lem:thin_criterion}
Let $G$ be a topological group, with closed subgroups $H$ and $U$ such that $UH$ is a neighbourhood of the identity in $G$, and a subset $S$ of $H$ such that $H = \grp{S, \N_H(U)}$.  Then for each closed subgroup $K$ containing $H$, $K/H$ is $U$-thin if and only if $K \cap sUH \subseteq UH$ for all $s \in S \cup S\inv$.  Moreover, if $K/H$ is $U$-thin and $UH/H$ is compact, then 
\[
K \cap HUH = (K \cap U)H \le T_U(H).
\]
\end{lem}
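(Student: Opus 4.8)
The plan is to route everything through the single set $L := (K \cap U)H$, using that, because $U$ is a subgroup, $L$ has the cleaner description $L = K \cap UH$: if $g = uh \in K$ with $u \in U$, $h \in H$, then $u = gh\inv \in K$, so $u \in K \cap U$, giving $K \cap UH \subseteq (K\cap U)H$, and the reverse inclusion is clear. The value of this identity is that $L$ behaves well under left translation by $H$: for $s \in H$ we have $s \in K$, so $K \cap sUH = s(K \cap UH) = sL$. Hence the hypothesis $K \cap sUH \subseteq UH$ is equivalent to $sL \subseteq UH$, and since $sL \subseteq K$ always, equivalent to $sL \subseteq L$; applying this to both $s$ and $s\inv$ turns it into $sL = L$. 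With this in place the forward (``only if'') direction is immediate: if $K/H$ is $U$-thin then $L$ is a (closed) subgroup, so every $s \in S \cup S\inv \subseteq H \subseteq L$ gives $K \cap sUH = sL = L \subseteq UH$.

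For the converse I would first upgrade ``$sL = L$ for generators $s$'' to ``$hL = L$ for all $h \in H$''. The key device is the stabilizer $\Stab(L) = \{g \in G : gL = L\}$, which is a subgroup of $G$ since $g \mapsto gL$ is a left action on subsets of $G$. The hypothesis places $S \cup S\inv$ in $\Stab(L)$ by the reduction above, and every $n \in \N_H(U)$ lies in $\Stab(L)$ automatically, because $nUH = (nUn\inv)(nH) = UH$ forces $nL = n(K \cap UH) = K \cap UH = L$. As $H = \grp{S, \N_H(U)}$, we get $H \le \Stab(L)$, i.e.\ $HL = L$. The subgroup property of $L$ then follows formally: $H(K\cap U) \subseteq HL = (K\cap U)H$, and since $K \cap U$ is a subgroup,
\[
L\cdot L = (K\cap U)\,H(K\cap U)\,H \subseteq (K\cap U)(K\cap U)H = L, \qquad L\inv = H(K\cap U) \subseteq L .
\]
For closedness, $UH$ contains an open set $W \ni 1$, so $K \cap W \subseteq L$ is an identity neighbourhood of $K$; a subgroup containing an identity neighbourhood is open, hence closed, in $K$, and $K$ is closed in $G$, so $L$ is a closed subgroup and $K/H$ is $U$-thin.

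For the ``moreover'' clause, assume $K/H$ is $U$-thin and $UH/H$ is compact, so $L$ is a closed subgroup with $H \le L \subseteq UH$. The identity $K \cap HUH = L$ comes from the same absorption trick: $L = K \cap UH \subseteq K \cap HUH$ is clear, while any $g = h_1 u h_2 \in K$ with $u \in U$ has $u = h_1\inv g h_2\inv \in K \cap U$, so $g \in H(K\cap U)H \subseteq HLH = L$. Finally, since $UH/H$ is compact, Proposition~\ref{prop:thick}(ii) identifies $T_U(H)$ as the largest closed subgroup lying between $H$ and $UH$; as $L$ is such a subgroup, $L \le T_U(H)$.

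The main obstacle is the converse direction, and within it the single step of converting the per-generator coset inclusions into the global equality $HL = L$. The right way to see this is to make $\Stab(L)$ a subgroup and feed it precisely the two generating families $S$ and $\N_H(U)$ furnished by $H = \grp{S,\N_H(U)}$; the normalizer hypothesis is exactly what makes the $\N_H(U)$-part free. Everything afterwards---deducing the subgroup and closedness properties of $L$, and comparing with $T_U(H)$ via Proposition~\ref{prop:thick}---is routine.
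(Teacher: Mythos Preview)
Your proof is correct and follows essentially the same route as the paper's. Both arguments hinge on the identity $L = (K\cap U)H = K\cap UH$, the translation formula $sL = K\cap sUH$ for $s\in H$, and the observation that $\N_H(U)$ stabilizes $L$ automatically while $S\cup S\inv$ stabilizes it by hypothesis; the paper packages the conclusion as $\langle L\rangle = L$ by checking left-invariance under the generators $K\cap U$, $\N_H(U)$, $S$, whereas you phrase it via the stabilizer subgroup $\Stab(L)$ and then verify $LL\subseteq L$, $L\inv\subseteq L$ directly---a cosmetic difference only.
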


\begin{proof}
Let $P = (K \cap U)H$; note that, since $H \le K$, we have $P = K \cap UH$.  In particular, $P$ is a neighbourhood of the identity in $K$.  For the purposes of this proof we may replace $S$ with $S \cup S\inv$ and assume that $S$ is symmetric.

Suppose $P$ is a closed subgroup and let $g \in K \cap sUH$ for $s \in S$.  Then $g = suh$ for some $u \in U$ and $h \in H$.  Since $s,g,h \in K$, we in fact have $u \in K \cap U$.  Hence
\[
g \in S(K \cap U)H \subseteq SP = P \subseteq UH,
\]
showing that $K \cap sUH \subseteq UH$ for all $s \in S$.

Conversely, suppose that $K \cap sUH \subseteq UH$ for all $s \in S$ and let $L = \grp{P}$.  Then certainly $P \subseteq L$, so $L$ is a subgroup containing $H$; moreover, since $L$ contains an identity neighbourhood in $K$, it is open in $K$ and hence closed in $G$.  To show that $L = P$, it suffices to show that $lP=P$ for all $l \in L$.  We note that
\[
L = \grp{K \cap U, H} = \grp{K \cap U, \N_H(U), S};
\]
thus it is enough to show that $P$ is $K \cap U$-invariant, $\N_H(U)$-invariant and $S$-invariant under left multiplication.  That $P$ is $K \cap U$-invariant is clear; we also see, given $h \in \N_H(U)$, that $h$ normalizes $K \cap U$ and hence
\[
hP = h(K \cap U)H = (K \cap U)hH = (K \cap U)H = P.
\]
Consider now $s \in S$: since $s \in K$, and using our assumption that $K \cap sUH \subseteq UH$, we have
\[
sP = s(K \cap UH) = K \cap sUH \subseteq K \cap UH = P.
\]
By applying the same argument to $s\inv$ (recalling that $S$ is symmetric), in fact we have $sP = P$.  Thus $P = L$.

Suppose now that $K/H$ is $U$-thin.  Then $L = P$ is a subgroup of $G$ such that $H \le L \subseteq UH$, so $L \le T_U(H)$.  Moreover, for all $h \in H$ we have
\[
K \cap hUH = h(K \cap UH) = hL = L,
\]
showing that $K \cap HUH = L$.
\end{proof}

In particular, under mild assumptions, $U$-thin subgroups over $H$ satisfy an ascending chain condition.

\begin{cor}\label{cor:ascending}
Let $G$ be a topological group, with closed subgroups $H$ and $U$ such that $UH$ is open in $G$ and $UH/H$ is compact, and a subset $S$ of $H$ such that $H = \grp{S, \N_H(U)}$.  Let $\{K_i \mid i \in I\}$ be a family of closed $U$-thin subgroups over $H$ directed by inclusion and let $K = \ol{\bigcup_{i \in I}K_i}$.  Then $K$ is $U$-thin over $H$.
\end{cor}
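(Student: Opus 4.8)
The plan is to verify the criterion of Lemma~\ref{lem:thin_criterion} for the subgroup $K$. The first observation is that $UH$ is \emph{clopen}: it is open by hypothesis, and since $UH/H$ is compact and $G/H$ is Hausdorff (as $H$ is closed), $UH/H$ is a compact, hence closed, subset of $G/H$, so being also open it is clopen. Pulling back along the continuous quotient map $G \to G/H$, whose preimage of $UH/H$ is exactly $UH$, we conclude that $UH$ is clopen in $G$, and therefore so is each left translate $sUH$. This clopenness is the technical fact that makes the whole argument run.

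Writing $L = \bigcup_{i \in I} K_i$, so that $K = \ol{L}$, I would first record the criterion at the level of $L$. Fix $s \in S \cup S\inv$. For each $i$, the subgroup $K_i$ is $U$-thin, so Lemma~\ref{lem:thin_criterion} gives $K_i \cap sUH \subseteq UH$; taking the union over the directed family yields $L \cap sUH \subseteq UH$. Now I pass to the closure. Because $sUH$ is open, one has the general inclusion $\ol{L} \cap sUH \subseteq \ol{L \cap sUH}$ (any neighbourhood of a point of $\ol{L} \cap sUH$ may be shrunk inside $sUH$, where it must meet $L$). Since $UH$ is closed and $L \cap sUH \subseteq UH$, the right-hand side is contained in $UH$, and hence
\[
K \cap sUH = \ol{L} \cap sUH \subseteq UH.
\]
As $s \in S \cup S\inv$ was arbitrary and $H = \grp{S, \N_H(U)}$, Lemma~\ref{lem:thin_criterion} now shows that $K$ is $U$-thin over $H$.

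The main obstacle is exactly the transition from the union $L$ to its closure $K$: the criterion controls each $K_i$, but $K$ consists of limit points that need not lie in any single $K_i$. The resolution is to exploit that $UH$ is simultaneously open and closed, which lets closure interact correctly with the intersection — openness permits localizing a limit point inside $sUH$ so that approximating elements of $L$ already satisfy the criterion, while closedness confines the resulting limit back inside $UH$. Neither property alone suffices, and this is precisely where the compactness of $UH/H$ (beyond the mere openness of $UH$ assumed in Lemma~\ref{lem:thin_criterion}) is used. As an alternative route avoiding the criterion, one can set $P_i = (K_i \cap U)H = K_i \cap UH$, note that these are closed subgroups forming a directed family so that $Q := \bigcup_i P_i$ is a subgroup contained in the clopen set $UH$, and then check directly, by the same open/closed manipulation, that $\ol{Q} = K \cap UH = (K \cap U)H$, which exhibits $(K \cap U)H$ as a closed subgroup.
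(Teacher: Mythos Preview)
Your proof is correct and follows essentially the same strategy as the paper's: verify the criterion of Lemma~\ref{lem:thin_criterion} for each $K_i$, use openness of the relevant set to pass the containment from $\bigcup_i K_i$ to its closure, and use closedness of a target set to trap the limit. The only variation is that the paper invokes the ``Moreover'' clause of Lemma~\ref{lem:thin_criterion} (so the open set is $HUH$ and the closed target is $T_U(H)$, which is closed by Proposition~\ref{prop:thick}), whereas you work directly with $sUH$ and instead observe that $UH$ itself is closed via compactness of $UH/H$ in the Hausdorff space $G/H$; your route is marginally more self-contained since it avoids appealing to the thickening.
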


\begin{proof}
By Lemma~\ref{lem:thin_criterion}, for each $i \in I$ we have $K_i \cap HUH \subseteq T_U(H)$.  Since $UH$ is open, so is $HUH$, and hence every point in $K \cap HUH$ is approximated by points in $K' \cap HUH$, where $K' = \bigcup_{i \in I}K_i$.  Since $T_U(H)$ is closed we deduce that $K \cap HUH \subseteq T_U(H)$, and hence $K/H$ is $U$-thin.
\end{proof}

\begin{cor}\label{cor:thin_reduction}
Let $G$ be a topological group, with closed subgroups $H$ and $U$ such that $UH$ is open in $G$ and $UH/H$ is compact.  Then $K$ is $U$-thin over $H$ if and only if $\grp{H,K \cap U}$ is $U$-thin over $H$.
\end{cor}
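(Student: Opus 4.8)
The plan is to observe that $U$-thinness of a closed subgroup $M$ over $H$ depends only on the subset $(M \cap U)H$ of $G$, and that this subset is unchanged when $K$ is replaced by $L := \grp{H, K \cap U}$; the corollary then follows by transporting the defining condition across the set identity $(K\cap U)H = (L\cap U)H$.

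First I would record the containment $L \le K$: since $H \le K$ and $K \cap U \le K$, the subgroup generated by these two sets lies in $K$. From this I would extract the identity $L \cap U = K \cap U$. Indeed, $L \le K$ yields $L \cap U \le K \cap U$, while $K \cap U \subseteq L$ (it is one of the generating sets of $L$) together with $K \cap U \subseteq U$ gives the reverse inclusion. Consequently $(L \cap U)H = (K \cap U)H$ as subsets of $G$, which is the only computation the proof really needs.

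Next I would run the equivalence off this set identity. For the forward direction, if $K$ is $U$-thin over $H$ then $(K \cap U)H$ is a closed subgroup; since any subgroup containing both $H$ and $K \cap U$ contains $L$, while $(K\cap U)H \subseteq L$ always holds, I get $L = (K \cap U)H$. Thus $L$ is closed, contains $H$, and satisfies $(L \cap U)H = L$, so $L$ is $U$-thin over $H$. For the converse, the assertion that $L$ is $U$-thin over $H$ already carries (via Definition~\ref{defn:thin}) the information that $L$ is a closed subgroup, so $(L \cap U)H$ is a closed subgroup; by the set identity this set is exactly $(K \cap U)H$, whence $K$ is $U$-thin over $H$.

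I do not expect a serious obstacle here: the argument is essentially set-theoretic and does not even appeal to the hypotheses that $UH$ be open or that $UH/H$ be compact (these are inherited from the ambient setting of the preceding results and could be dropped for this particular statement). The only point that demands care is the bookkeeping around closedness, since $\grp{H, K \cap U}$ is not closed a priori but becomes closed precisely when either thinness condition holds; verifying that each direction is therefore well-posed is what the two cases above are designed to handle.
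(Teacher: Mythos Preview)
Your argument is correct and in fact more elementary than the paper's. The paper proves the corollary by invoking the criterion of Lemma~\ref{lem:thin_criterion} with $S=H$, namely that a closed overgroup $M$ of $H$ is $U$-thin if and only if $M \cap HUH \subseteq UH$; it then checks this criterion transfers between $K$ and $K' = \grp{H,K\cap U}$, the nontrivial direction being the contrapositive computation that a witness $g = huh' \in K \cap HUH \setminus UH$ forces $u \in K \cap U$ and hence $g \in K'$. Your route bypasses Lemma~\ref{lem:thin_criterion} entirely: the observation $L \cap U = K \cap U$, hence $(L \cap U)H = (K \cap U)H$, reduces the statement to the definition. This is shorter and, as you note, makes no use of the openness of $UH$ or the compactness of $UH/H$, whereas the paper's route needs at least the former to apply Lemma~\ref{lem:thin_criterion}.

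One small correction to your closing commentary: under the stated hypotheses $L$ \emph{is} closed a priori, not merely when a thinness condition holds. Since $H \le K$ we have $(K \cap U)H = K \cap UH$, which is open in $K$ because $UH$ is open in $G$; thus $L$ contains an identity neighbourhood in $K$, so $L$ is open in $K$ and therefore closed in $G$. This does not affect your proof (your bookkeeping of closedness in each direction is self-contained), but it does mean your remark that ``$\grp{H,K\cap U}$ is not closed a priori'' is only accurate once the hypotheses are dropped.
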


\begin{proof}
Let $K' = \grp{H,K \cap U}$; we use throughout the characterization of the $U$-thin property given by Lemma~\ref{lem:thin_criterion}, taking $S=H$.  Clearly, if $K \cap HUH \subseteq UH$, then also $K' \cap HUH \subseteq UH$ so if $K$ is $U$-thin then $K'$ is $U$-thin.  On the other hand, if $K$ is not $U$-thin, then there exist $h \in H$ and $g \in K \cap hUH$ such that $g \not\in UH$, say $g = huh'$, for $h,h' \in H$ and $u \in U$; we see then that $u = h\inv g(h')\inv$ is an element of $K \cap U$.  Thus $g \in H(K \cap U)H \subseteq K'$, showing that $K'$ is not $U$-thin.
\end{proof}

\begin{rmk}
Suppose that $G$ is a \tdlc group, $S$ is a compact symmetric subset of $G$, $U$ is a compact open subgroup of $G$ and $H = \grp{S}$.  Then the closed subgroups $K$ of $G$ that are $U$-thin over $H$ form a subspace $\Sub(G/H)_U$ of $\Sub(G/H)$.  In this situation, Lemma~\ref{lem:thin_criterion} is equivalent to saying that for $K \in \Sub(G/H)$, we have $K \in \Sub(G/H)_U$ if and only if $K \cap SU \subseteq UH$, and we also note that $H \cap SU \subseteq UK$.  Thus $\Sub(G/H)_U$ is a clopen subspace of $\Sub(G/H)$.  Clearly also $\Sub(G/H)_U$ is a down-set in $\Sub(G/H)$ and its complement in $\Sub(G/H)$ is an up-set in $\Sub(G)$.  We will obtain an irreducible factor in the $\tCDI$ factorization of $G/H$ by pairing a minimal element $K$ of $\Sub(G/H) \setminus \Sub(G/H)_U$ with a maximal element of $\Sub(G/H)_U \cap \Sub(K)$.
\end{rmk}

\subsection{The degree and further chain conditions}

To obtain a descending chain condition complementary to Corollary~\ref{cor:ascending}, we need an additional finiteness condition.

\begin{lem}\label{lem:descending_chain}
Let $G$ be a topological group, with a family $\{K_i \mid i \in I\}$ of closed subgroups directed under reverse inclusion; let $U$ be a closed subgroup of $G$ and $S$ a subset of $K = \bigcap_{i \in I}K$.  Let $H$ be a closed subgroup of $K$ such that $H = \grp{S,\N_H(U)}$ and $K/H$ is $U$-thin (for example, $H = \grp{S,\N_K(U)}$ will work).  Suppose also that $UH$ is open and that $\bigcup_{s \in S \cup S\inv}sUH/H$ is compact.  Then there is $i_0 \in I$ such that $K_i/H$ is $U$-thin for all $i \ge i_0$.
\end{lem}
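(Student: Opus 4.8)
The plan is to reduce everything to the membership criterion of Lemma~\ref{lem:thin_criterion} and then run a finite-intersection argument in the coset space $G/H$. First I note that the hypotheses of Lemma~\ref{lem:thin_criterion} are in force: we have $S \subseteq H$ (since $H = \grp{S,\N_H(U)}$), $UH$ is an open identity neighbourhood, and each $K_i$ is a closed subgroup containing $H$ (as $H \le K \le K_i$). Hence, writing $\pi \colon G \to G/H$ for the quotient map and $C = \bigcup_{s \in S \cup S\inv} sUH$, Lemma~\ref{lem:thin_criterion} tells us that $K_i/H$ is $U$-thin if and only if $K_i \cap C \subseteq UH$. The sets $K_i$, $C$ and $UH$ are all unions of left cosets of $H$ (i.e.\ saturated), so $\pi$ preserves the relevant intersections and this condition is equivalent to $\pi(K_i) \cap \pi(C) \subseteq \pi(UH)$ in $G/H$.

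Next I would package the obstruction to $U$-thinness as a directed family of compact sets. Set
\[
F_i = \pi(K_i) \cap \pi(C) \cap (G/H \setminus \pi(UH)).
\]
Since $\pi(UH)$ is open and $\pi(K_i)$ is closed, $F_i$ is a closed subset of $\pi(C)$, which is compact by hypothesis, so $F_i$ is compact; and $K_i/H$ is $U$-thin exactly when $F_i = \emptyset$. Because the $K_i$ are directed under reverse inclusion, the sets $\pi(K_i)$—and therefore the $F_i$—form a family directed downward by inclusion. The key computation is that, by saturation, $\bigcap_{i} \pi(K_i) = \pi(K)$: a coset $gH$ meets every $K_i$ iff $g \in \bigcap_i K_i = K$. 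Consequently
\[
\bigcap_{i} F_i = \pi(K) \cap \pi(C) \cap (G/H \setminus \pi(UH)),
\]
and this is empty precisely because $K/H$ is assumed $U$-thin, i.e.\ $\pi(K) \cap \pi(C) \subseteq \pi(UH)$.

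Finally, I would invoke compactness. The $F_i$ are closed subsets of the compact space $\pi(C)$, directed downward, with $\bigcap_i F_i = \emptyset$; were all of them nonempty, downward directedness would give them the finite intersection property (any finite intersection contains some $F_k$), forcing a nonempty total intersection. Hence $F_{i_0} = \emptyset$ for some $i_0$, and downward directedness then yields $F_i \subseteq F_{i_0} = \emptyset$, i.e.\ $K_i/H$ is $U$-thin, for every $i \ge i_0$. The only real subtlety—and the step I would handle with care—is the passage to $G/H$: one must verify the saturation identities (so that $\pi(A \cap B) = \pi(A) \cap \pi(B)$ and $\bigcap_i \pi(K_i) = \pi(K)$ hold) and observe that the compactness hypothesis on $\bigcup_{s \in S \cup S\inv} sUH/H$ is exactly what makes each $F_i$ compact, so that the finite intersection property can be applied. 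Everything else is a routine translation through Lemma~\ref{lem:thin_criterion}.
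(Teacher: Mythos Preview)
Your proof is correct and follows essentially the same route as the paper: both apply Lemma~\ref{lem:thin_criterion} to reduce $U$-thinness of $K_i/H$ to the condition $K_i \cap \big(\bigcup_{s \in S\cup S^{-1}} sUH \setminus UH\big) = \emptyset$, then use compactness of this ``bad set'' modulo $H$ together with the downward-directed family $\{K_i\}$ to force emptiness at some index. The paper simply names the set $Y = \bigcup_{s \in S\cup S^{-1}} sUH \setminus UH$ and works in $G$, whereas you push everything through $\pi$ and are more explicit about the saturation identities, but the argument is the same.
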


\begin{proof}
Our hypotheses ensure that $Y/H$ is a compact subset of $G/H$, where
\[
Y = \bigcup_{s \in S \cup S\inv}sUH \setminus UH.
\]

By Lemma~\ref{lem:thin_criterion}, the intersection $K \cap Y$ is empty; moreover, both $Y$ and each of the groups $K_i$ is a union of left cosets of $H$.  Since $Y/H$ is compact, and since the family $\{K_i \mid i \in I\}$ is directed by reverse inclusion, there must be some $i_0 \in I$ such that $K_{i_0} \cap Y = \emptyset$, and hence $K_i \cap Y = \emptyset$ for all $i \ge i_0$.  By Lemma~\ref{lem:thin_criterion}, $K_i/H$ is $U$-thin over $H$ for all $i \ge i_0$.
\end{proof}

For the purposes of an induction argument later, we introduce a certain numerical invariant.

\begin{defn}\label{defn:degree}
Let $G$ be a topological group, with closed subgroups $H$ and $U$ and a subset $S$ of $H$.  Then the \defbold{degree} $\deg(G/H,U,S)$ of $(G/H,U,S)$ is the smallest cardinality of a subset $Y$ of $G$ containing the identity such that $\bigcup_{s \in S \cup S\inv}sUH \subseteq \bigcup_{y \in Y}UyH$.
\end{defn}

\begin{lem}\label{lem:degree_control}
Let $G$ be a topological group, with closed subgroups $H$ and $U$ and a subset $S$ of $H$.  Suppose that $\bigcup_{s \in S \cup S\inv}sUH/H$ is compact, that $UgH/H$ is compact and open in $G/H$ for all $g \in G$ and that $H = \grp{S,\N_H(U)}$.
\begin{enumerate}[(i)]
\item The degree of $(G/H,U,S)$ is finite.
\item Suppose that $K$ is a closed subgroup of $G$ containing $H$.  Then 
\[
\deg(G/K,U,S) \le \deg(G/H,U,S).
\]
If equality occurs, then $K$ is $U$-thin over $H$.
\end{enumerate}
\end{lem}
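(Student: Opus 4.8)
The plan is to read the degree as a count of double cosets and to track how these merge when $H$ is enlarged to $K$. I will use throughout that the sets $UyH$ $(y\in G)$ are precisely the $(U,H)$-double cosets, so they \emph{partition} $G$, and that $UyH\cdot K=UyK$ and $sUH\cdot K=sUK$ because $HK=K$. For (i), I would note that since $UgH/H$ is open in $G/H$ for every $g$, the family $\{UyH/H\mid y\in G\}$ is an open cover of $G/H$, hence in particular covers the compact set $\bigcup_{s\in S\cup S\inv}sUH/H$. Extracting a finite subcover and adjoining the identity coset yields a finite set $Y\ni 1$ with $\bigcup_{s}sUH\subseteq\bigcup_{y\in Y}UyH$, so $\deg(G/H,U,S)<\infty$.

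For the inequality in (ii), take an optimal $Y$ for $H$, so that $1\in Y$, $|Y|=\deg(G/H,U,S)=:n$ and $\bigcup_{s}sUH\subseteq\bigcup_{y\in Y}UyH$. Multiplying this inclusion on the right by $K$ and using $HK=K$ turns it into $\bigcup_{s}sUK\subseteq\bigcup_{y\in Y}UyK$, a cover of $\bigcup_s sUK$ by sets $UyK$ indexed by the same $Y\ni 1$. Hence $\deg(G/K,U,S)\le n$.

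For the equality clause I would argue the contrapositive: if $K$ is \emph{not} $U$-thin over $H$, then $\deg(G/K,U,S)<\deg(G/H,U,S)$. The hypotheses guarantee that $UH$ is an identity neighbourhood and $H=\grp{S,\N_H(U)}$, so Lemma~\ref{lem:thin_criterion} applies and provides $s\in S\cup S\inv$ and $g\in K\cap sUH$ with $g\notin UH$. Keeping the optimal cover $Y$ from the previous paragraph, the point $g\in\bigcup_s sUH$ lies in $Uy_0H$ for some $y_0\in Y$; since $g\in K$ we get $Uy_0K=UgK=UK$, while $g\notin UH$ forces $y_0\neq 1$. Thus the assignment $y\mapsto UyK$ identifies the two distinct indices $1,y_0\in Y$, so the cover $\{UyK\mid y\in Y\}$ of $\bigcup_s sUK$ consists of at most $n-1$ distinct $(U,K)$-double cosets. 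Choosing one representative from each (and $1$ for the coset $UK$) produces a cover of $\bigcup_s sUK$ by at most $n-1$ sets $UyK$ whose index set contains $1$, whence $\deg(G/K,U,S)\le n-1<n$, as required.

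The parts I expect to be routine are (i) and the inequality, which amount to a compactness argument and a one-line right-multiplication. The step requiring care is the equality case: extracting the non-thinness witness $g$ from Lemma~\ref{lem:thin_criterion} and verifying that it forces a genuine collision $Uy_0K=UK$ with $y_0\neq 1$, so that passing from $H$ to $K$ strictly drops the double-coset count. A minor bookkeeping subtlety throughout is the constraint $1\in Y$ in the definition of degree; this is harmless here because $1\in UH$ always holds, so the coset $UK$ is available as the distinguished member of every cover.
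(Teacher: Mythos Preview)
Your proof is correct and follows essentially the same route as the paper. The paper also reduces to a finite cover by $(U,H)$-double cosets, right-multiplies by $K$ for the inequality, and for the equality clause pivots on Lemma~\ref{lem:thin_criterion} together with the observation that equality holds precisely when the double cosets $Uy_iK$ remain pairwise disjoint; the only organizational difference is that the paper argues the equality case directly (disjointness $\Rightarrow UK\cap\bigcup_s sUH=UH\Rightarrow$ $U$-thin), whereas you run the contrapositive by exhibiting a collision from a non-thinness witness.
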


\begin{proof}
We may assume that $S$ is a symmetric subset containing the identity.  Let $O = \bigcup_{s \in S}sUH$.

(i)
By hypothesis $O/H$ is compact.  Since $U$ has open orbits on $G/H$, it follows that $O$ is contained in the union of finitely many $U$-orbits.

(ii)
Note that the hypotheses of the lemma still apply to $(G/K,U,S)$.  In particular, the $U$-orbits form a clopen partition of $G/K$, corresponding to the partition of $G$ into $(U,K)$-double cosets.

In light of (i), there are $y_1,\dots,y_n \in G$, with $n = \deg(G/H,U,S)$, such that $O \subseteq \bigcup^n_{i=1}Uy_iH$ and such that the double cosets $Uy_1H, \dots, Uy_nH$ are disjoint.  Since $1 \in O$, one of the double cosets is the trivial double coset, without loss of generality $y_1=1$.  We then have 
\[
\bigcup_{s \in S} sUK = OK \subseteq  \bigcup^n_{i=1}Uy_iK,
\]
so $\deg(G/K,U,S) \le \deg(G/H,U,S)$.  We see that equality occurs if and only if the double cosets $Uy_1K, \dots, Uy_nK$ are disjoint.  In particular, if this occurs then $UK \cap O = UH$, so
\[
\forall s \in S: UK \cap sUH \subseteq UH,
\]
and hence $K/H$ is $U$-thin by Lemma~\ref{lem:thin_criterion}.
\end{proof}

The degree also has an interesting consequence for ascending chains of closed subgroups.

\begin{lem}\label{lem:ascending_degree}
Let $G,H,U,S$ be as in Lemma~\ref{lem:degree_control}.  Let $\{K_i \mid i \in I\}$ be a family of closed overgroups of $H$, directed under inclusion, let $K = \ol{\bigcup_{i \in I}K_i}$ and suppose $V = K \cap U$ is open in $K$.  Then there is $i_0 \in I$ such that $K/\grp{H,K_i \cap V}$ is $U$-thin for all $i \ge i_0$.
\end{lem}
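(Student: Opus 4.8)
The plan is to prove the statement directly, bypassing the degree; in fact for this route neither Lemma~\ref{lem:degree_control} nor the openness of the sets $UgH/H$ is needed, only the compactness hypothesis and the normalizer condition on $H$. As in the proof of Lemma~\ref{lem:degree_control} I would assume $S$ is symmetric and contains the identity, and abbreviate $H_i = \grp{H,K_i\cap V}$, noting $K_i\cap V = K_i\cap U$ since $K_i\le K$. Because $V = K\cap U$ is open (hence closed) in $K$, the set $VH_i = \bigcup_{h\in H_i}Vh$ is open in $K$, and so, being a subset of the closed subgroup $K$, it is a closed subgroup of $G$ whenever it is a subgroup at all. Since $(K\cap U)H_i = VH_i$, the assertion that $K/H_i$ is $U$-thin is by Definition~\ref{defn:thin} exactly the assertion that $VH_i$ is a subgroup; proving this is my target, and it conveniently removes any need to know a priori that $H_i$ itself is closed.

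First I would reduce to a single containment. One has $\grp{V,H_i} = \grp{V,H} = \grp{V,S,\N_H(U)}$, and every $n\in\N_H(U)$ normalizes $V$, since $n\in K\cap\N_G(U)$ fixes both $K$ and $U$ under conjugation. Hence left multiplication of $VH_i$ by any element of $V$ or of $\N_H(U)\subseteq H_i$ returns $VH_i$, so (using that $S$ is symmetric) $VH_i$ is a subgroup if and only if it is stable under left multiplication by $S$, that is, if and only if $sV\subseteq VH_i$ for all $s\in S$. Setting $P = \bigcup_{s\in S}sV$, this is the containment $P\subseteq VH_i$, equivalently $PH\subseteq VH_i$ because $VH_i$ is right-$H$-invariant.

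The gain from passing to $PH$ is compactness. Using $H\le K$ one checks $sVH = K\cap sUH$ for each $s$, so $PH = K\cap\bigcup_{s\in S}sUH$, and therefore $PH/H = (K/H)\cap\bigl(\bigcup_{s\in S}sUH/H\bigr)$ is closed in the compact set $\bigcup_{s\in S\cup S\inv}sUH/H$; thus $PH/H$ is compact. Meanwhile the subgroups $VH_i$ form a family directed by inclusion (as the $K_i$ are), each $VH_i/H$ is open in $K/H$, and I claim $\bigcup_i VH_i = \grp{H,V}$: indeed $\bigcup_i H_i$ is dense in the open, hence closed, subgroup $\grp{H,V}$ — because $\bigcup_i(K_i\cap V)$ is dense in $V$ — and any $x\in\grp{H,V}$ lies in the open coset $Vx$, which must then meet some $H_i$, giving $x\in VH_i$. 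Since $PH\subseteq\grp{H,V} = \bigcup_i VH_i$, the directed open family $\{VH_i/H\}$ covers the compact set $PH/H$.

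Compactness together with directedness then produces a single $i_0\in I$ with $PH/H\subseteq VH_{i_0}/H$, that is $PH\subseteq VH_{i_0}$; and for every $i\ge i_0$ one has $VH_i\supseteq VH_{i_0}\supseteq PH\supseteq P$, whence $sV\subseteq VH_i$ for all $s\in S$ and $VH_i$ is a closed subgroup — i.e. $K/\grp{H,K_i\cap V}$ is $U$-thin, as required. I expect the only delicate point to be the compactness of $PH/H$: it is what lets the increasing open cover collapse to a single member, and it rests on the identity $PH = K\cap\bigcup_s sUH$, which converts the hypothesis that $\bigcup_{s\in S\cup S\inv}sUH/H$ is compact into compactness inside $K/H$. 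The remaining pieces — the reduction of ``$VH_i$ is a subgroup'' to the containment $P\subseteq VH_i$, and the density statement $\bigcup_i VH_i = \grp{H,V}$ — are routine once $V$ is known to be a clopen subgroup of $K$.
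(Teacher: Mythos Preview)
Your proof is correct, and it takes a genuinely different route from the paper's.

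The paper argues via the degree: it sets $L_i = \grp{H,K_i\cap V}$, observes from Lemma~\ref{lem:degree_control} that $\deg(G/L_i,U,S)$ is finite and nonincreasing along the directed system, hence eventually constant; once constant, Lemma~\ref{lem:degree_control}(ii) gives that $L_j/L_i$ is $U$-thin for all $j\ge i$, then Corollary~\ref{cor:ascending} passes to the limit $L/L_i$, and finally Corollary~\ref{cor:thin_reduction} upgrades to $K/L_i$. Your argument replaces this chain of lemmas by a single compactness step: you reduce ``$(K\cap U)H_i$ is a subgroup'' to the containment $P\subseteq VH_i$, identify $PH/H$ as a compact set inside $K/H$ via $PH = K\cap\bigcup_{s\in S}sUH$, and cover it by the directed open family $\{VH_i/H\}$. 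This is more self-contained --- you invoke neither the degree nor Corollaries~\ref{cor:ascending} and~\ref{cor:thin_reduction} --- and, as you note, it does not use the openness of the orbits $UgH/H$. What the paper's approach buys is that the degree is the engine of the induction in Proposition~\ref{prop:cdi_disconnected}, so Lemma~\ref{lem:degree_control} is already in hand and the proof is short as a corollary; your approach shows the result stands on its own with weaker hypotheses.
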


\begin{proof}
For each $i \in I$ let $L_i = \grp{H,K_i \cap V}$ and let $L = \ol{\bigcup_{i \in I}L_i}$.  Since $L$ is closed and contains $K_i \cap V$ for all $i \in I$, we have $V \le L$, and also $H \le L$, so $\grp{H,V} \le L$; conversely, it is clear that $\grp{H,V}$ is open in $K$, hence closed, and contains $L_i$ for all $i \in I$, so $L = \grp{H,V}$.

By Lemma~\ref{lem:degree_control}, $\deg(G/H,U,S)$ is finite and 
\[
\deg(G/L_i,U,S) \le \deg(G/L_j,U,S) \le \deg(G/H,U,S) \text{ for all } i \ge j.
\]
There is therefore some $i_0 \in I$ such that $\deg(G/L_{i_0},U,S)$ achieves its minimum value over all $i \in I$, and then for all $i \ge i_0$ we have $\deg(G/L_{i_0},U,S) = \deg(G/L_{i},U,S)$.  Thus, after replacing $I$ with the set $\{i \in I \mid i \ge i_0\}$, we may assume $\deg(G/L_i,U,S)$ is constant over $i \in I$.  It then follows that for all pairs $i,j \in I$ such that $i \le j$, the coset space $L_j/L_i$ is $U$-thin.  By Corollary~\ref{cor:ascending}, the coset space $L/L_i$ is also $U$-thin for all $i \in I$.  By Corollary~\ref{cor:thin_reduction}, we deduce that $K/L_i$ is $U$-thin for all $i \in I$.
\end{proof}

In particular, we deduce the following about containment in maximal subgroups of a \tdlc group.

\begin{prop}\label{prop:cdi_maximal_in_G}
Let $G$ be a \tdlc group, let $H$ be a compactly generated closed subgroup of $G$ and let $U$ be a compact open subgroup of $G$.  Suppose that $G = \grp{H,U}$.  Then at least one of the following holds:
\begin{enumerate}[(i)]
\item There is an ascending chain $(H_i)$ of proper closed subgroups of $G$, each containing $H$, such that $G = \overline{\bigcup H_i}$ and such that $H_i = \grp{H,H_i \cap U}$ and $G = UH_i$ for all $i \in I$;
\item $H$ is contained in a maximal proper closed subgroup $K$ of $G$.
\end{enumerate}
\end{prop}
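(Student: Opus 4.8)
The plan is to run Zorn's lemma on the poset $\mc{P}_0$ of proper closed subgroups of $G$ containing $H$, and to use the degree machinery---concretely Lemma~\ref{lem:ascending_degree}---to convert the one way in which Zorn can fail into the ascending chain demanded by~(i). First I would fix a compact symmetric generating set $S$ of $H$ containing the identity, available since $H$ is compactly generated, and check that $(G,H,U,S)$ meets the standing hypotheses of Lemma~\ref{lem:degree_control} (hence of Lemma~\ref{lem:ascending_degree}): as $U$ is compact open, $UgH/H$ is compact and open in $G/H$ for every $g$, the set $\bigcup_{s \in S\cup S\inv}sUH=(S\cup S\inv)UH$ has compact image in $G/H$, and $H=\grp{S}=\grp{S,\N_H(U)}$. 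We may assume $H<G$, so that $\mc{P}_0\neq\emptyset$, and then split on the dichotomy: either every chain in $\mc{P}_0$ has the closure of its union again lying in $\mc{P}_0$, or some chain has closure equal to $G$.

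In the first case, the closure of the union of a chain is a closed subgroup containing $H$ which, by assumption, is proper, hence an upper bound inside $\mc{P}_0$. Zorn's lemma then produces a maximal element $K$ of $\mc{P}_0$. Any closed subgroup strictly containing $K$ is either proper---impossible by maximality, since it would lie in $\mc{P}_0$ above $K$---or equal to $G$, so $K$ is a maximal proper closed subgroup of $G$ containing $H$, which is~(ii).

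In the second case I take a chain $(K_i)_{i\in I}$ in $\mc{P}_0$, directed under inclusion, with $\overline{\bigcup_i K_i}=G$. Applying Lemma~\ref{lem:ascending_degree} with $K=G$ and $V=G\cap U=U$ (open in $G$) yields some $i_0$ such that $G/L_i$ is $U$-thin for all $i\ge i_0$, where $L_i=\grp{H,K_i\cap U}$. Each $L_i$ is closed, since it contains the subgroup $K_i\cap U$ which is open in $K_i$, so $L_i$ is open and therefore closed in $K_i$, hence in $G$; it is reduced by construction and proper because $L_i\le K_i<G$. The $U$-thinness says that $UL_i=(G\cap U)L_i$ is a closed subgroup, and as it contains both $U$ and $H$ it equals $\grp{U,H}=G$, so $G=UL_i$. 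The crucial final step is to recover $K_i$ itself: writing any $k\in K_i\subseteq G=UL_i$ as $k=u\ell$ with $u\in U$ and $\ell\in L_i\le K_i$ forces $u=k\ell\inv\in K_i\cap U\subseteq L_i$, whence $k\in L_i$; thus $K_i=L_i$ for all $i\ge i_0$. Consequently the tail $(K_i)_{i\ge i_0}$ consists of proper closed reduced subgroups with $G=UK_i$ and $\overline{\bigcup_{i\ge i_0}K_i}=G$, which is exactly~(i).

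I expect the main obstacle to be the second case, specifically the passage from ``$\overline{\bigcup_i K_i}=G$'' to a chain whose members are individually cocompact ($G=UK_i$) \emph{and} reduced ($K_i=\grp{H,K_i\cap U}$). This is precisely what Lemma~\ref{lem:ascending_degree} supplies, via the stabilization of the degree along the chain; the short modular computation identifying $K_i$ with $L_i$ is then what makes the resulting chain satisfy all four requirements of~(i) at once.
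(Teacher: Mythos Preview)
Your argument is correct, and it is a genuinely different organization of the proof from the paper's. The paper assumes~(i) fails and then runs Zorn's lemma \emph{twice}: first on the poset $\mc{H}$ of $(U,H)$-reduced proper closed overgroups of $H$ (using Lemma~\ref{lem:ascending_degree} and the negation of~(i) to show chains in $\mc{H}$ stay in $\mc{H}$), obtaining a maximal reduced $K$; then a second time on the poset $\mc{K}$ of all proper closed overgroups of $K$, where maximality of $K$ in $\mc{H}$ forces every $L \in \mc{K}$ to satisfy $L \cap U = K \cap U$, so chains in $\mc{K}$ are trivially bounded. Your approach instead runs Zorn once on the full poset $\mc{P}_0$ and, when a chain escapes, applies Lemma~\ref{lem:ascending_degree} directly with $K = G$ to get $G = UL_i$ on a tail; the short Dedekind-style identity $K_i = (K_i \cap U)L_i = L_i$ then shows the original chain members were already reduced and $U$-cocompact, yielding~(i).

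What your route buys is economy: one Zorn argument instead of two, and no auxiliary poset $\mc{H}$. What the paper's route buys is slightly more information in case~(ii): its maximal subgroup is exhibited as lying above a maximal \emph{reduced} subgroup $K$ satisfying $L \cap U = K \cap U$ for every proper closed $L \ge K$, a structural fact that is not visible in your argument (though also not claimed in the proposition). Both proofs hinge on the same lemma, so neither is more elementary; yours is just more direct for the statement as given.
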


\begin{proof}
We will assume that (i) is false, and prove (ii).

Let $S$ be a compact generating set for $H$.  Then it is clear that $G,H,U,S$ are as in Lemma~\ref{lem:degree_control}.

Let $\mc{H}$ be the set of closed subgroups $K$ such that $H \le K < G$ and $K$ is $(U,H)$-reduced.  Clearly $H \in \mc{H}$, so $\mc{H}$ is nonempty.  Let $H_+ = \overline{\bigcup_{i \in I} H_i}$, where $(H_i)_{i \in I}$ is an ascending chain of elements of $\mc{H}$.  Then 
\[
\forall i \in I: H_i = \grp{H,H_i \cap U} \le \grp{H,H_+ \cap U},
\]
and hence $H_+$ is $(U,H)$-reduced.  Suppose $G = H_+$.  Then by Lemma~\ref{lem:ascending_degree}, there is $i \in I$ such that $G/H_j$ is $U$-thin, in other words $UH_j$ is an open subgroup of $G$, for all $j \ge i$; since $G = \grp{H,U}$, it follows that $G = UH_j$ for all $j \ge i$.  Since (i) is assumed false, this is a contradiction.  So in fact $H_+ \in \mc{H}$.

We now apply Zorn's lemma to obtain a maximal element $K$ of $\mc{H}$.  By the definition of $\mc{H}$, we see that $K$ is closed, proper, and $(U,H)$-reduced.  Since $\grp{H,U} = G$ we must have $K \cap U < U$.  Let $\mc{K}$ be the set of proper closed subgroups of $G$ containing $K$.  Then given $L \in \mc{K}$, we have $\grp{H,L \cap U} = K$ by the maximality of $K$; in particular, $L \cap U = K \cap U < U$.  Taking an ascending chain $(K_i)_{i \in I}$ in $\mc{K}$, with $K_+ := \ol{\bigcup_{i \in I}K_i}$, we therefore find that $K_+ \cap U = K \cap U$, so $K_+ < G$ and hence $K_+ \in \mc{K}$.  Applying Zorn's lemma again, we obtain a maximal element $L$ of $\mc{K}$, which is then a maximal proper closed subgroup of $G$ that contains $H$.
\end{proof}

\subsection{The $\tCDI$ factorization for zero-dimensional coset spaces}

In this subsection, we will obtain a $\tCDI$ factorization for a certain class of zero-dimensional coset spaces.

\begin{defn}
Let $G/H$ be a locally compact zero-dimensional coset space.  We say $G/H$ has \defbold{type CG} (with respect to a subgroup $U$ of $G$ and a subset $S$ of $H$) if the following conditions are satisfied:
\begin{enumerate}[(a)]
\item $UgH/H$ is compact and open in $G/H$ for all $g \in G$;
\item $\bigcup_{s \in S \cup S\inv} sUH/H$ is compact;
\item $H$ is $(U,S)$-reduced.
\end{enumerate}
\end{defn}

If $G/H$ is of type CG and $H \le K \le G$, then the coset space $G/K$ satisfies conditions (a) and (b) of type CG with respect to $U$ and $S$.  Thus $G/K$ is itself of type CG if and only if $K$ is $(U,S)$-reduced, or equivalently, if and only if $K = \grp{H,K \cap U}$.

Given a \tdlc group $G$, a compact subset $S$ such that $H = \grp{S}$ is closed and a compact open subgroup $U$ of $G$, it is clear that $G/H$ is of type CG with respect to $U$ and $S$.  We can now state and prove a factorization result for coset spaces of type CG.

\begin{prop}\label{prop:cdi_disconnected}
Let $G/H$ be a coset space of type CG with respect to a subgroup $U$ of $G$ and a subset $S$ of $H$.  Then $G/H$ admits a $\tCDI_{U,S}$ factorization.
\end{prop}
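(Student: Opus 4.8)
The plan is to prove this by induction on the degree $d = \deg(G/H,U,S)$, which is finite by Lemma~\ref{lem:degree_control}(i), peeling off one factor of each type in the course of a single inductive step. The organizing observation I would record first is that $G$ is $U$-thin over $H$ if and only if $d = 1$: if $(G \cap U)H = UH$ is a closed subgroup then $sUH = UH$ for every $s \in S \subseteq H$, forcing $d = 1$; conversely $d = 1$ means $\bigcup_{s \in S \cup S\inv} sUH \subseteq UH$, so by Lemma~\ref{lem:thin_criterion} every closed overgroup of $H$, in particular $G$, is $U$-thin. This pins down the base case: when $d = 1$, the subgroup $UH = T_U(H)$ is clopen (open by type CG~(a), a subgroup by $U$-thinness), and $G/H$ factorizes as $H \to UH$ of type $\tC_{U,S}$ followed by $UH \to G$ of type $\tD_{U,S}$ (with any degenerate factor omitted).

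For the inductive step $d > 1$ I would begin by peeling off the thickening. Setting $H' = T_U(H)$, Proposition~\ref{prop:thick} shows $H'$ is $(U,S)$-reduced and $U$-thick, so $H \to H'$ is of type $\tC_{U,S}$ and $G/H'$ is again of type CG. If $\deg(G/H',U,S) < d$ the inductive hypothesis applied to $G/H'$ finishes the argument, so I may assume $\deg(G/H',U,S) = d$; since $d > 1$, $G$ is then not $U$-thin over $H'$. The next step is to find, via Zorn's lemma, a closed overgroup $K$ of $H'$ minimal among those that are \emph{not} $U$-thin: along any descending chain of non-$U$-thin subgroups over $H'$, Lemma~\ref{lem:descending_chain} (whose hypotheses follow from type CG together with $H' = \grp{S,\N_{H'}(U)}$) shows the intersection cannot be $U$-thin, so every chain has a lower bound in the set. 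Corollary~\ref{cor:thin_reduction} combined with minimality of $K$ forces $K = \grp{H',K \cap U}$, so $K$ is $(U,S)$-reduced. Dually, using Corollary~\ref{cor:ascending} and Zorn, I would pick $L$ maximal among $U$-thin subgroups with $H' \le L < K$; any closed $M$ with $L < M < K$ is non-$U$-thin by maximality of $L$, yet then satisfies $H' \le M < K$, contradicting minimality of $K$, so no such $M$ exists and $L$ is maximal among proper closed subgroups of $K$. Hence $K/L$ is of type $\tI_{U,S}$.

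It then remains to recognize the two outstanding factors. Because $H'$ is $U$-thick, Lemma~\ref{lem:thin_criterion} gives $(L \cap U)H' \le T_U(H') = H'$, whence $L \cap U = H' \cap U$ and $L/H'$ is of type $\tD_{U,S}$. Finally, since $K$ is $(U,S)$-reduced the coset space $G/K$ is of type CG, and because $K$ is not $U$-thin over $H'$, Lemma~\ref{lem:degree_control}(ii) yields the strict inequality $\deg(G/K,U,S) < d$; the inductive hypothesis then supplies a $\tCDI_{U,S}$ factorization of $G/K$. Concatenating the factors $H \to H' \to L \to K$ with this factorization gives the desired $\tCDI_{U,S}$ factorization of $G/H$.

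The crux of the argument, and what I expect to be the main obstacle, is arranging the two opposing chain conditions so that a minimal non-$U$-thin $K$ and a maximal $U$-thin $L$ beneath it exist \emph{simultaneously}: the descending-chain input (Lemma~\ref{lem:descending_chain}) secures the minimal $K$, the ascending-chain input (Corollary~\ref{cor:ascending}) secures the maximal $L$, and the interplay between minimality of $K$ and maximality of $L$ is precisely what promotes $L$ to a maximal proper closed subgroup of $K$. The degree, being a finite quantity that must strictly drop once we pass above a non-$U$-thin subgroup, is then the well-founded invariant that guarantees the induction terminates.
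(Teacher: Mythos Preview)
Your proposal is correct and follows essentially the same approach as the paper's proof: induction on the degree, passing to the $U$-thickening to peel off a $\tC_{U,S}$ factor, then using Lemma~\ref{lem:descending_chain} with Zorn to find a minimal non-$U$-thin overgroup $K$, Corollary~\ref{cor:ascending} with Zorn to find a maximal $U$-thin $L$ beneath it, and Lemma~\ref{lem:degree_control}(ii) to drop the degree for the inductive step. The only differences are cosmetic (your explicit treatment of the base case $d=1$ versus the paper's ``we may assume $U\nleq H$'', and your notation $H',L,K$ versus the paper's $K_2,K_1$).
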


\begin{proof}
We proceed by induction on $\deg(G/H,U,S)$.  Without loss of generality, assume $S = S\inv$ and $1 \in S$.

As a first step, we can pass from $G/H$ to $G/T_U(H)$.  The factor $T_U(H)/H$ is then of type $\tC_{U,S}$; $T_U(H)$ is $(U,S)$-reduced; and $\deg(G/T_U(H),U,S) \le \deg(G/H,U,S)$.  So without loss of generality, we can assume that $H$ is $U$-thick.  We may then assume that $U \nleq H$, in other words, that $G/H$ is not of type $\tD_{U,S}$.  In that case, since $H$ is $U$-thick, we see that $UH$ is not a subgroup and hence $G$ is not $U$-thin over $H$.

Let $\mc{K}_1$ be the set of all closed overgroups of $H$ that are not $U$-thin over $H$.   By Lemma~\ref{lem:descending_chain} and Zorn's lemma,  $\mc{K}_1$ has a minimal element $K_1$.  By Corollary~\ref{cor:thin_reduction} we see that $K_1$ is $(U,S)$-reduced.  Now let $\mc{K}_2$ be the set of closed subgroups $H \le K \le K_1$ such that $K/H$ is $U$-thin.  For each $K \in \mc{K}_2$ we have a closed subgroup $(K \cap U)H \subseteq UH$; since $H$ is already $U$-thick, this is only possible if $(K \cap U)H = H$, in other words, $K \cap U = H \cap U$.  By Corollary~\ref{cor:ascending}, $\mc{K}_2$ has a maximal element $K_2$, and since $K_2/H$ is $U$-thin we have $K_2 \neq K_1$.  On the other hand, we see that no closed subgroup $K$ of $G$ can lie strictly between $K_2$ and $K_1$: if $K$ were $U$-thin it would contradict the maximality of $K_2$, whereas if it were not $U$-thin it would contradict the minimality of $K_1$.

The properties we have shown so far demonstrate that $K_2/H$ is of type $\tD_{U,S}$, while $K_1/K_2$ is of type $\tI_{U,S}$.  Since $K_1$ is $(U,S)$-reduced, the coset space $G/K_1$ also satisfies the hypotheses of the proposition and we can continue the factorization.  Moreover, since $K_1/H$ is not $U$-thin, by Lemma~\ref{lem:degree_control} we have 
\[
\deg(G/K_1,U,S) < \deg(G/H,U,S).
\]
We now conclude that $G/H$ has a $\tCDI_{U,S}$-factorization, by the induction hypothesis.
\end{proof}

We can extract another variant of Proposition~\ref{prop:cdi_maximal_in_G} from the proof of Proposition~\ref{prop:cdi_disconnected}.

\begin{cor}\label{cor:cdi_maximal}
Let $G/H$ be a coset space of type CG with respect to a subgroup $U$ of $G$ and some $S \subseteq H$; for example, $G$ is a \tdlc group, $H$ is a compactly generated closed subgroup and $U$ is a compact open subgroup.  Suppose that $HU$ is not a subgroup of $G$.  Then there is a closed subgroup $V$ of $U$ such that $H$ is contained in a maximal proper closed subgroup of $K := \grp{H,V}$ and $K \cap U = V$.
\end{cor}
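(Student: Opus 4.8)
The plan is to reuse the construction of the irreducible factor $K_1/K_2$ from the proof of Proposition~\ref{prop:cdi_disconnected}, but \emph{without} the preliminary passage to $T_U(H)$: here the hypothesis already hands us a non-$U$-thin subgroup to start from, so we can run the Zorn apparatus directly at $H$. The first observation I would make is that, for $K=G$, the set $(G\cap U)H = UH$ is not a subgroup at all, since $HU$ (hence $UH$) fails to be one; therefore $G$ is not $U$-thin over $H$, and the collection $\mc{K}_1$ of closed overgroups of $H$ that are not $U$-thin over $H$ is nonempty. I would also record that type CG supplies exactly what the chain-condition lemmas need: condition (a) makes $UH$ open with $UH/H$ compact, condition (b) makes $\bigcup_{s\in S\cup S\inv}sUH/H$ compact, and condition (c) gives $H=\grp{S,H\cap U}=\grp{S,\N_H(U)}$ because $H\cap U\le \N_H(U)$.

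Next I would produce a minimal non-$U$-thin overgroup. Ordering $\mc{K}_1$ by reverse inclusion, a descending chain $\{K_i\}$ has intersection $K_\infty$; were $K_\infty$ $U$-thin over $H$, Lemma~\ref{lem:descending_chain} (applied with base $H$ to this chain) would force some $K_i$ to be $U$-thin, a contradiction. Hence $K_\infty\in\mc{K}_1$ is a lower bound, and Zorn's lemma yields a minimal element $K_1\in\mc{K}_1$. I would then set $V=K_1\cap U$, a closed subgroup of $U$, and $K=\grp{H,V}$.

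The key structural step is to identify $K$ with $K_1$. Corollary~\ref{cor:thin_reduction} says $K_1$ is $U$-thin over $H$ if and only if $\grp{H,K_1\cap U}$ is; since $K_1$ is not $U$-thin, neither is $\grp{H,V}$, which lies between $H$ and $K_1$, so minimality of $K_1$ forces $\grp{H,V}=K_1$. Thus $K=K_1$ and $K\cap U=K_1\cap U=V$, as required. To locate the maximal subgroup, let $\mc{K}_2$ be the closed subgroups $K'$ with $H\le K'\le K_1$ and $K'/H$ $U$-thin; this is nonempty as $H\in\mc{K}_2$, and Corollary~\ref{cor:ascending} shows it is closed under directed unions, so Zorn gives a maximal element $K_2$. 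Since $K_1$ is not $U$-thin we have $K_2\neq K_1$, and exactly as in the proof of Proposition~\ref{prop:cdi_disconnected} nothing can lie strictly between $K_2$ and $K_1$: a $U$-thin intermediate would contradict maximality of $K_2$, while a non-$U$-thin one would contradict minimality of $K_1$. Hence $K_2$ is a maximal proper closed subgroup of $K=K_1$ containing $H$.

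I would expect the main obstacle to be conceptual rather than computational, namely recognizing that the hypothesis ``$HU$ is not a subgroup'' is precisely the statement that $G$ is not $U$-thin over $H$, which is what lets us start the Zorn argument at $H$ itself and bypass the thickening step used in Proposition~\ref{prop:cdi_disconnected}. Once this is in place, the two complementary chain conditions (Lemma~\ref{lem:descending_chain} descending, Corollary~\ref{cor:ascending} ascending) do the real work, and the only point requiring care is the identification $K=\grp{H,V}=K_1$ via Corollary~\ref{cor:thin_reduction}.
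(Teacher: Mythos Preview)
Your proof is correct and follows the same strategy as the paper: extract the $K_1/K_2$ irreducible factor from the proof of Proposition~\ref{prop:cdi_disconnected} and set $V=K_1\cap U$. The one difference is that the paper first replaces $H$ by $T_U(H)$ (asserting ``no loss of generality'') and then quotes the construction of $K_1,K_2$ verbatim, whereas you observe that the hypothesis ``$HU$ is not a subgroup'' already says $G$ is not $U$-thin over $H$, so the thickening step is unnecessary and the Zorn apparatus can be run directly at $H$. This is a genuine simplification: the paper's reduction to $T_U(H)$ requires a small check (that $T_U(H)U=HU$ and that $\grp{T_U(H),V}=\grp{H,V}$ once $V\supseteq T_U(H)\cap U$), which your route avoids entirely. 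The only point you leave implicit is why $\grp{H,K_1\cap U}$ is closed, so that minimality of $K_1$ actually forces equality rather than just $\overline{\grp{H,K_1\cap U}}=K_1$; but this is immediate from condition (a) of type CG, since $UH\cap K_1=(K_1\cap U)H$ is open in $K_1$, making $\grp{H,K_1\cap U}$ an open (hence closed) subgroup of $K_1$.
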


\begin{proof}
There is no loss of generality in replacing $H$ with $T_U(H)$, so we may assume $H$ is $U$-thick.  Obtain the subgroups $K_1$ and $K_2$ as in the proof of Proposition~\ref{prop:cdi_disconnected}.  Then $K_2$ is a maximal proper closed subgroup of $K_1$.  Moreover, $K_1$ is $(U,S)$-reduced and contains $H$, so $K_1 = \grp{S,K_1 \cap U} = \grp{H,K_1 \cap U}$; setting $V = K_1 \cap U$ yields the desired conclusion.
\end{proof}

\begin{rmk}
At this point it is worth considering another phenomenon in \tdlc groups related to coset decompositions.  In \cite[Theorem~B]{ReidDistal}, it was shown that given a \tdlc group $G$ and a compactly generated closed subgroup $H$, there is an open subgroup $E$ of $G$, called a \defbold{reduced envelope} for $H$, with the properties that $H \le E$ and for all open subgroups $O$ of $G$, one has
\[
|H:H \cap O| < \infty \Leftrightarrow |E:E \cap O| < \infty.
\]
Thus up to finite index, $E$ is the unique smallest open subgroup of $G$ containing $H$.  Moreover, for a sufficiently small compact open subgroup $U$, one can choose $E$ to be of the form $E = HRU$ where $R := \Res_G(H)$ is the intersection of all open $H$-invariant subgroups of $G$, and one also has $R = \Res_E(E)$, so $R$ is normal in $E$.  It is also the case that $R$ has no proper $H$-invariant open subgroup; indeed, given a proper closed $H$-invariant subgroup $S$ of $R$, then the conjugation action of $H$ on $R/S$ is nondistal (\cite[Proposition~3.6]{ReidDistal}; a \defbold{nondistal} action of a group $H$ on a Hausdorff space $X$ means there exist distinct points $x,y \in X$ and a net $(h_i)$ in $H$ such that $h_ix$ and $h_iy$ converge to the same point).  One thus obtains the following sequence of closed subgroups:
\[
H \le L \le E \le G,
\]
where $L = \ol{HR}$; one can also characterize $L$ as the intersection of all open subgroups of $G$ containing $H$.  We see that $E/L$ is of type $\tC_U$, while $G/E$ is of type $\tD_U$.  The factor $L/H$, assuming it is nontrivial, then admits some $\tCDI_{U}$ factorization; what we know from the way $L$ was constructed is that no proper open subgroup of $L$ contains $H$, so the uppermost factor $L/M$ in the $\tCDI_{U}$ factorization of $L/H$ must be of type $\tC_U$ (that is, $L = (L \cap U)M$) or type $\tI$.  The structure of $L/M$ is further restricted by the fact that one has a continuous $H$-equivariant map with dense image
\[
\theta: R/(M \cap R) \rightarrow L/M; \; g(M \cap R) \mapsto gM,
\]
where $H$ acts nondistally on $R/(M \cap R)$ by conjugation, and hence also $H$ acts nondistally on $L/M$ by translation.
\end{rmk}

\subsection{The $\tCDI$ factorization for locally compact coset spaces}\label{sec:connected}

We can now combine the results of the previous subsection with some basic properties of Lie groups to obtain a $\tCDI$ factorization theorem for coset spaces of locally compact groups, as stated in the introduction.

\begin{proof}[Proof of Theorem~\ref{thm:cdi}]
Without loss of generality we can replace $G$ with $G/R$ and $H$ with $HR/R$, so we may assume that $G^\circ$ is a Lie group.

By Lemma~\ref{lem:vD} there is an open subgroup $U$ of $G$ such that $U \subseteq OG^\circ$ and $U/G^\circ$ is compact.  Fix a compact generating set $S$ for $H$ such that $1 \in S$ and $S = S\inv$.

Suppose that $H^\circ < G^\circ$, and let $\mc{K}$ be the set of closed subgroups $H \le K \le G$ such that $H^\circ < K^\circ$.  Since $G^\circ$ is finite-dimensional, it is easy to see that $\mc{K}$ has a minimal element $K$.  The minimality of $K$ ensures that $K/H$ is connected, so $K = \grp{H,K \cap O}$.  There is then an almost connected open subgroup $V$ of $K$ contained in $U \cap K$; note that $K = VH$.  Let $\mc{L}$ be the set of closed subgroups $H \le L < K$.  Then given $L \in \mc{L}$, we have $L^\circ = H^\circ$, so the coset space $L/H$ is zero-dimensional, and we see (after dividing out by $H^\circ$) that in fact $L/H$ is a coset space of type CG with respect to $W$ and $S$, where $W$ is an almost connected open subgroup of $L$ contained in $V$.  By Proposition~\ref{prop:cdi_disconnected}, $L/H$ admits a $\tCDI_{W,S}$ factorization.  In particular, all the factor spaces in this factorization are as in (i)--(iii) of the theorem.  We can choose a suitable $L \in \mc{L}$ in one of two ways.  If $\mc{L}$ has a maximal element, take $L \in \mc{L}$ maximal; then the factor $K/L$ is as in (iii).  On the other hand if $\mc{L}$ does not have a maximal element, then there is an increasing sequence $(L_i)_{i \in I}$ in $\mc{L}$ such that $\bigcup_{i \in I} L_i$ is dense in $K$, so $\bigcup_{i \in I}L_i \cap V$ is dense in $V$.  It then follows from Lemma~\ref{lem:comp_gen:approximation} that $V = (O \cap V)L_j$ for some $j \in I$.  We now take $L = L_j$, so that
\[
K = VH = (O \cap V)L.
\]
In this case, we see that $L$ is cocompact in $K$, so it is compactly generated, and the factor $K/L$ is as in (ii).  In either case, having factorized $K/H$ in the required way, we can continue the argument with the coset space $G/K$ in place of $G/H$.

By repeating the argument of the last paragraph and using induction on $\dim_{\Rb}(G^\circ) - \dim_{\Rb}(H^{\circ})$, we reduce to the case that $H^\circ = G^\circ$.  In this case, we see that $G/H$ is of type CG with respect to $U$ and $S$, so by Proposition~\ref{prop:cdi_disconnected} it admits a $\tCDI_{U,S}$ factorization.  As before, all the factor spaces in this factorization are as in (i)--(iii) of the theorem, and we are done.
\end{proof}

\subsection{Example: Groups of tree automorphisms fixing one end}

The following is a natural class of examples of transitive actions of \tdlc groups on noncompact locally compact spaces where the $\tCDI$ factorization may be useful.

\begin{defn}
Let $\mc{T}$ be a locally finite leafless tree.  A closed subgroup $G$ of $\Aut(\mc{T})$ is \defbold{almost boundary-transitive} if $G$ fixes one end $\xi$ of $\mc{T}$ and acts transitively on the remaining ends $\partial \mc{T} \setminus \{\xi\}$.
\end{defn}

The transitive action we wish to consider here is the action of $G$ on the locally compact (usually not compact) space $X = \partial \mc{T} \setminus \{\xi\}$, where $\partial \mc{T}$ has its usual compact topology, on which $G$ acts continuously.  By Lemma~\ref{lem:transitive_topology}, we can identify $X$ with the coset space $G/G_{\xi'}$, where $\xi'$ is any end of $T$ other than $\xi'$.  There are a few possibilities.

\begin{lem}\label{lem:tree_end_types}
Let $\mc{T}$ be a locally finite leafless tree and let $G$ be a closed subgroup of $\Aut(\mc{T})$ fixing one end $\xi$ and acting transitively on the remaining ends.  Let $X = \partial \mc{T} \setminus \{\xi\}$, let $\xi' \in X$, let $\mc{L}$ be the line between $\xi$ and $\xi'$ and let $H = G_{\xi'}$.  Then $H$ is compactly generated and exactly one of the following holds.
\begin{enumerate}[(i)]
\item (Isolated end case) $\xi$ is an isolated point in $\partial \mc{T}$, $X$ is compact and $G$ acts without translation.
\item (Lineal case) $\mc{T} = \mc{L}$ and $G = H = \Zb$, acting by translation.
\item (Horocyclic case) $\xi$ is an accumulation point in $\partial \mc{T}$ but $G$ acts without translation; in particular, $H \le G_v$ for every $v \in V\mc{L}$, so $X$ has a $\tCDI$ factorization
\[
X \rightarrow G/G_v \rightarrow \{\ast\}.
\]
Moreover, every compactly generated subgroup of $G$ fixes a vertex of the tree, so $G$ is not compactly generated; also, $G$ has infinitely many orbits on $V\mc{T}$, since it preserves the horospheres around $\xi$.
\item (Focal case) $H$ has a semidirect decomposition $H = U_0 \rtimes \grp{s}$, where $U_0$ is the pointwise fixator of $\mc{L}$ and $s$ is a translation towards $\xi$ of length $l > 0$.  The space $\partial \mc{T}$ is homeomorphic to the Cantor set.  Moreover, $G$ has exactly $l$ orbits on $V\mc{T}$: each orbit is a union of horospheres, where two horospheres belong to the same orbit if and only if the distance between them is a multiple of $l$. In particular, $G$ is compactly generated.
\end{enumerate}
\end{lem}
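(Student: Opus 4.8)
The plan is to organize everything around the single homomorphism recording translation toward $\xi$. Fix the combinatorial horocyclic (Busemann) function $b : V\mc{T} \to \Zb$ associated with $\xi$, normalized so that the step toward $\xi$ increases $b$ by $1$; its level sets are the horospheres. Since every $g \in G$ fixes $\xi$, it carries horospheres to horospheres and shifts $b$ by a constant $\beta(g) \in \Zb$, giving a homomorphism $\beta : G \to \Zb$. An element is hyperbolic exactly when $\beta(g) \neq 0$ (its axis then has $\xi$ as an endpoint) and is elliptic, fixing a sub-ray toward $\xi$, exactly when $\beta(g) = 0$; write $G^0 = \ker\beta \trianglelefteq G$ and let $l \ge 0$ be the nonnegative generator of $\beta(G) = l\Zb$. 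The first dichotomy is $l = 0$ (\emph{no translation}) versus $l > 0$, and the second is whether $\mc{T} = \mc{L}$; I will check the four combinations give exactly cases (i)--(iv), keyed also to whether $\xi$ is isolated in $\partial\mc{T}$. The easy implications are that $\xi$ is isolated iff $X$ is compact (a point removed from a compact space), and that a translation toward an isolated $\xi$ forces $\mc{T} = \mc{L}$, since a hyperbolic element must preserve degrees along its axis, which is impossible once the axis enters the eventual single ray toward $\xi$.

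For the structure of $H = G_{\xi'}$, every element of $H$ fixes both $\xi$ and $\xi'$, hence stabilizes $\mc{L}$ and acts on it by translation, so $H = U_0 \rtimes (H/U_0)$, where $U_0$ is the pointwise fixator of $\mc{L}$ (compact, being closed in a vertex stabilizer) and $H/U_0$ embeds in $\Zb$. When $l = 0$ this quotient is trivial, so $H = U_0$ is compact; this yields the compact generation of $H$ in cases (i) and (iii) and shows $H \le G_v$ for all $v \in V\mc{L}$, as claimed in (iii). When $l > 0$ I produce a translation $s \in H$ along $\mc{L}$ of length $l$ by taking any hyperbolic $g \in G$ with $\beta(g) = l$ and conjugating by an element carrying the non-$\xi$ end of its axis to $\xi'$ (using transitivity on $X$); then $H = U_0 \rtimes \grp{s}$ is compactly generated, giving case (iv), and case (ii), where $\mc{T} = \mc{L}$ forces $U_0 = \triv$ and $G = H \cong \Zb$.

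The heart of the argument is the focal case (iv), where $l > 0$ and $\mc{T} \neq \mc{L}$. I first record that $G = G^0\grp{s}$ with $G^0$ normal and $s$ fixing $\xi'$, whence $X = G\xi' = G^0\xi'$, so $G^0$ already acts transitively on $X$. For each level $m$ let $\phi_m : X \to b^{-1}(m)$ send an end $\eta$ to the vertex at level $m$ on the line from $\eta$ to $\xi$; this map is surjective by leaflessness and, since $G^0$ preserves $b$, it is $G^0$-equivariant, so $G^0$ is transitive on each horosphere. As $\beta(g) = b(gv) - b(v)$, a horosphere moves only to one congruent modulo $l$, while $s$ realizes every such shift; hence the $G$-orbits on $V\mc{T}$ are exactly the $l$ unions of horospheres grouped by level modulo $l$, giving all orbit assertions of (iv). To see $\partial\mc{T}$ is a Cantor set it remains to show perfectness: down-degrees along $\mc{L}$ are $l$-periodic because $s$ preserves $\mc{L}$ and degrees, and since $\mc{T} \neq \mc{L}$ some vertex of $\mc{L}$ has down-degree $\ge 2$; periodicity forces down-degree $\ge 2$ at levels tending to $-\infty$, so branches accumulate at $\xi'$, and transitivity on $X$ (isolatedness of an end being $G$-invariant) propagates non-isolation to every end. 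Finally, finitely many vertex-orbits means $G$ acts properly and cocompactly on the connected locally finite graph $\mc{T}$, so $G$ is compactly generated.

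It remains to finish case (iii), where $l = 0$ and $\xi$ is an accumulation point. Preservation of each horosphere gives infinitely many vertex-orbits, and the factorization $X = G/U_0 \to G/G_v \to \{\ast\}$ is then of type $\tC$ ($U_0$ is cocompact in the compact group $G_v$) followed by type $\tD$ ($G_v$ is open in $G$). For the last claim, each elliptic element fixes a sub-ray toward $\xi$ above a level $\lambda(g)$ that is locally constant, hence bounded on a compact generating set, so any compactly generated $K \le G$ fixes a common sub-ray, in particular a vertex; consequently $G$ itself fixes no vertex, since a vertex-fixing $G$ would fix a ray to $\xi$ and thereby preserve the level at which each end branches off, contradicting transitivity on the infinite set $X$, and therefore $G$ is not compactly generated. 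I expect the focal orbit structure and perfectness to be the main obstacle; the clean resolution is that $s$ can be chosen inside $H$, which makes $G^0$ transitive on $X$ and forces the degree periodicity along $\mc{L}$.
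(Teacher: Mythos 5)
Your proof is correct and follows essentially the same route as the paper's: the same case division, the same description $H = U_0 \rtimes \grp{s}$ with $U_0$ the compact pointwise fixator of $\mc{L}$, and the same mechanism (an increasing union of open vertex stabilizers) for showing that compactly generated subgroups are elliptic in the horocyclic case. Your systematic use of the Busemann homomorphism $\beta\colon G \to \Zb$ makes the focal case cleaner in places: producing $s \in H$ of minimal translation length $l$ by conjugating an arbitrary minimal hyperbolic element, and then reading off the vertex orbits from transitivity of $\ker\beta$ on each horosphere via the equivariant surjections $\phi_m$, is tidier than the paper's argument (which shows every orbit meets $\mc{L}$ and then invokes the imprimitivity system of horospheres together with the minimal translation length). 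Two small points of precision. First, in the horocyclic case you rule out a global fixed vertex by saying that a constant branch-level function ``contradicts transitivity on the infinite set $X$''; an infinite $X$ with constant branch level is perfectly possible (hang a bushy leafless tree below a single vertex of a degree-two ray to $\xi$), so infinitude alone is not the contradiction. The correct contradiction is with the case hypothesis that $\xi$ is an accumulation point, which forces ends to branch off the fixed ray at arbitrarily high levels, so the branch level cannot be constant; alternatively, if $G$ fixed a vertex it would be compact, hence $X = G\xi'$ would be compact and $\xi$ isolated. Second, when deducing that $\partial\mc{T}$ is perfect, transitivity on $X$ does not reach the fixed end $\xi$ itself; your $l$-periodicity of down-degrees along $\mc{L}$ supplies branch vertices at levels tending to $+\infty$ as well as $-\infty$, and that is what makes $\xi$ an accumulation point. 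Both repairs are one line from facts already in your argument.
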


\begin{proof}
Note that in all cases we have $X \cong G/H$ by Lemma~\ref{lem:transitive_topology}.

Let $U_0$ be the pointwise fixator of $\mc{L}$.  We see that $H$ acts on $\mc{L}$, so either $H = U_0$ or $H = U_0 \rtimes \grp{s}$ where $s$ is a translation towards $\xi$.  In either case, $U_0$ is compact, so $H$ is compactly generated.

The cases (i)--(iv) are clearly mutually exclusive.  Suppose $\xi$ is isolated; then $X$ is clearly compact.  If $G$ acts with translation, then $G$ has a translation with attracting end $\xi$; the fact that $\xi$ is isolated then means there are no ends other than $\xi$ and $\xi'$, and then since $\mc{T}$ is leafless, we must have $\mc{T} = \mc{L}$, and we see that $G = H = \Zb$.   We may assume from now on $\xi$ is an accumulation point in $\partial \mc{T}$.

Suppose $H = U_0$, that is, $H$ acts without translation.  Then $\mc{L}$ is not an axis of translation of $G$.  Since $G$ acts transitively on $X$, in fact no axis of translation of $G$ has $\xi$ as an end.  Since $G$ fixes $\xi$, we conclude that $G$ acts without translation, and hence each element fixes pointwise a ray representing $\xi$.  It is then easy to see, via Lemma~\ref{lem:comp_gen:approximation} and the fact that vertex stabilizers are open, that every compactly generated subgroup fixes pointwise a ray representing $\xi$.  The remaining assertions for the horocyclic case are also easily verified.

From now on we assume the remaining possibility for $H$, that is, $H = U_0 \rtimes \grp{s}$ where $s$ is a translation, of length $l$ say.  From the structure of $H$ we see that $s$ achieves the minimum translation length of $H$, and hence the minimum translation length of $G$ along the axis $\mc{L}$.  By considering how $s$ acts on the other ends, we see that $\xi'$ is an accumulation point in $\partial \mc{T}$.  Since $G$ acts transitively on $X$, in fact every point of $\partial \mc{T}$ is an accumulation point, and indeed every point in $X$ is the end of the axis of some $G$-conjugate of $s$, where the other end of the axis is $\xi$.  All axes of translation of $G$ are achieved in this way, so $G$ has minimum translation length $l$.  Since $\partial \mc{T}$ is perfect, second-countable, totally disconnected and compact, we see that $\partial \mc{T}$ is homeomorphic to the Cantor set.

We next observe that every $G$-orbit on $V\mc{T}$ intersects $\mc{L}$: given a vertex $v \in V\mc{T}$, then since $v$ is not a leaf, $v$ lies on some line $\mc{L}'$ between $\xi$ and some other end $\xi''$.  We see that in fact $\mc{L}' = g\mc{L}$ for some $g \in G$, so $g\inv v \in V\mc{L}$.  Since $\grp{s}$ has $l$ orbits on $V\mc{L}$, it follows that $G$ has at most $l$ orbits on $V\mc{L}$.  On the other hand, the horospheres around the fixed end $\xi$ form a system of imprimitivity for $G$, and since $G$ has minimum translation length $l$, two horospheres can only intersect the same $G$-orbit if their distance is a multiple of $l$.  Thus the orbits of $G$ on $V\mc{T}$ are as described in the focal case.  The fact that $G$ acts with finitely many orbits on $V\mc{T}$ ensures that $G$ is cocompact in $\Aut(\mc{T})$, and hence $G$ is compactly generated.
\end{proof}

From now on, we will confine our attention to the focal case.  The motivation for considering such groups comes from two sources of examples.

\begin{enumerate}[(1)]
\item A \defbold{scale group action} of a \tdlc group is a continuous, proper, vertex-transitive and almost boundary-transitive action on a locally finite tree $\mc{T}$ with more than two ends.  Scale groups were recently studied by G. Willis in \cite{WillisSG}, and play an important role in the general theory of \tdlc groups; in effect, whenever one has a \tdlc group $G$ and an automorphism $\alpha$ of $G$ that does not stabilize any compact open subgroup, then the large-scale dynamics of $\alpha$ can be represented by a scale group action (not necessarily unique) on a suitable tree.

\item Suppose we have a locally finite tree $\mc{T}$ and a closed subgroup $H$ of $\Aut(\mc{T}$) acting $2$-transitively on $\partial \mc{T}$.  As with $2$-transitive finite permutation groups, it is natural to ask what kind of group can appear as the point stabilizer $G = H_{\xi}$ for $\xi \in \partial \mc{T}$; clearly $G$ is then almost boundary-transitive.  Boundary-$2$-transitive subgroups of $\Aut(\mc{T})$ for a locally finite tree $\mc{T}$ are a rich source of compactly generated topologically simple \tdlc groups: it was shown by M. Burger and Sh. Mozes in \cite[Proposition~3.1.2]{BurgerMozes} that any such group has a cocompact topologically simple normal subgroup, and P.-E. Caprace and N. Radu showed in \cite{CapRad} that for a fixed $\mc{T}$, the topologically simple boundary-$2$-transitive closed subgroups of $\Aut(\mc{T})$ form a closed subspace of $\Sub(\Aut(\mc{T}))$.  Some of the groups that arise have been classified directly by Radu (\cite{Radu}).  Boundary-$2$-transitive actions on locally finite trees also play prominent roles in \cite{CCMT} and \cite{CaretteDreesen}.
\end{enumerate}

\begin{lem}\label{lem:tree_special_types}
Let $\mc{T}$ be a locally finite leafless tree and let $G$ be a closed subgroup of $\Aut(\mc{T})$.
\begin{enumerate}[(i)]
\item $G$ is a scale group on $\mc{T}$ if and only if the action is almost boundary-transitive and focal, with minimum translation length $1$.
\item Suppose that $\mc{T}$ has more than two ends, $G$ acts $2$-transitively on $\partial \mc{T}$ and let $\xi \in \partial \mc{T}$.  Then $G_{\xi}$ is almost boundary-transitive and focal, with minimum translation length at most $2$.
\end{enumerate}
\end{lem}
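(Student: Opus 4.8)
The plan is to prove both parts by translating the definitions of \emph{scale group} and \emph{$2$-transitivity} into statements about how $G$ (or $G_\xi$) acts on the line $\mc{L}$ and on the ends $\partial\mc{T}$, and then invoke Lemma~\ref{lem:tree_end_types} to land in the focal case. For part (i), the implication from scale group to almost boundary-transitive, focal, minimum translation length $1$ should be essentially unwinding definitions: a scale group action is by definition vertex-transitive, proper, and almost boundary-transitive on a tree with more than two ends, so I must check that this forces the focal case of Lemma~\ref{lem:tree_end_types} (ruling out isolated, lineal, and horocyclic) and that the minimum translation length is exactly $1$. Vertex-transitivity immediately excludes the horocyclic case (which has infinitely many vertex orbits) and the lineal case (where $\mc{T}=\mc{L}$ has only two ends, contradicting ``more than two ends''); the isolated end case gives compact $X$, and I would argue that vertex-transitivity together with $\xi$ being fixed forces $\xi$ to be an accumulation point, again excluding case (i) of the lemma. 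That leaves the focal case. Since $G$ is vertex-transitive, it has a single orbit on $V\mc{T}$, and by the orbit description in the focal case (orbits are unions of horospheres at distance a multiple of $l$), having one orbit forces $l=1$. Conversely, if the action is almost boundary-transitive, focal, with minimum translation length $1$, then the one-orbit computation in reverse shows $G$ is vertex-transitive; properness is automatic since $G\le\Aut(\mc{T})$ is closed and vertex stabilizers are compact, so I recover the scale group definition.

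For part (ii), I start from $G$ acting $2$-transitively on $\partial\mc{T}$ and fix $\xi$, setting $H := G_\xi$. First I would verify $H$ is almost boundary-transitive: since $G$ is $2$-transitive on $\partial\mc{T}$, the stabilizer $H$ of $\xi$ acts transitively on $\partial\mc{T}\setminus\{\xi\}$, which is exactly the definition. Then I must show the action of $H$ is focal (not isolated, lineal, or horocyclic) and bound the minimum translation length by $2$. The key geometric input is that $2$-transitivity of $G$ lets me find, for the line $\mc{L}$ between $\xi$ and a chosen $\xi'$, elements of $G$ producing translations along $\mc{L}$: given any two disjoint pairs of ends, there is a group element carrying one pair to the other, and by applying this to nested configurations along $\mc{L}$ I can manufacture a hyperbolic element of $H$ with axis $\mc{L}$ translating towards $\xi$. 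This rules out the horocyclic case (where $H$ acts without translation) and hence, since $\mc{T}$ has more than two ends (excluding lineal) and $\xi$ is then an accumulation point (excluding isolated), places us in the focal case of Lemma~\ref{lem:tree_end_types}.

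The main obstacle will be the translation-length bound of $2$. I expect the argument to go as follows: by $2$-transitivity of $G$ on $\partial\mc{T}$, for any two pairs of distinct ends there is an element of $G$ mapping one ordered pair to the other. I would use this to show that $H=G_\xi$ acts transitively enough on the horospheres around $\xi$ that the minimum translation length $l$ of $H$ along $\mc{L}$ cannot exceed $2$; concretely, if $l$ were large one could produce two ends $\eta,\eta'\neq\xi$ whose confluent vertices with $\xi'$ lie at incompatible distances, contradicting the ability of $H$ to carry one to the other while respecting the combinatorics imposed by $2$-transitivity. The delicate point is that $2$-transitivity on $\partial\mc{T}$ constrains how vertices (equivalently, finite configurations in the tree) can be permuted: roughly, $G$ may fail to be vertex-transitive but its action on pairs of ends forces translation lengths to be small, and pinning down why $2$ (rather than $1$) is the correct bound---accounting for the possibility that $G$ only becomes vertex-transitive after passing to an index-$2$ subgroup, e.g.\ when the tree is bipartite and $G$ preserves the bipartition---is where the real care is needed. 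I would handle this by examining the induced action of $\grp{s}$ on horospheres and using transitivity of $G$ on $\partial\mc{T}$ to show every horosphere-class at distance $l$ is identified, then checking that the only obstruction to $l=1$ is a parity (bipartite) phenomenon giving $l=2$.
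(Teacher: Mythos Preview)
Your treatment of part (i) is correct and is essentially the paper's argument spelled out; the paper simply says part (i) is clear from Lemma~\ref{lem:tree_end_types}, and you have unpacked exactly why.

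For part (ii) your outline is sound up to the point where you try to bound the minimum translation length, but the argument you sketch for $l\le 2$ has a gap. You propose to ``examine the induced action of $\grp{s}$ on horospheres and use transitivity of $G$ on $\partial\mc{T}$ to show every horosphere-class at distance $l$ is identified'', but the horosphere classes at distance $l$ are \emph{already} identified by $s$; what you need is to merge horosphere classes at distance less than $l$, and transitivity of $G$ on $\partial\mc{T}$ gives you no new elements of $G_\xi$ beyond those you already have. Similarly, the single inversion $r\in G$ swapping $\xi$ and $\xi'$ (which $2$-transitivity provides) does not by itself produce short translations: conjugating $r$ by powers of $s$ only gives inversions centred at points spaced $l$ apart along $\mc{L}$, and products of these yield translations of length a multiple of $l$. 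So the ``parity'' heuristic does not close the argument as stated.

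The paper takes a different and much shorter route. It invokes \cite[Lemma~3.1.1]{BurgerMozes}: boundary-$2$-transitivity of $G$ is equivalent to every vertex stabilizer $G_v$ acting transitively on $\partial\mc{T}$. Two things follow at once. First, $G_v$ is transitive on the neighbours of $v$, so $G$ is edge-transitive and hence has at most two orbits on $V\mc{T}$. Second, $G_v$ being boundary-transitive gives $G = G_\xi G_v$ for every vertex $v$, so $G_\xi$ has exactly the same vertex orbits as $G$, in particular at most two. This already excludes the horocyclic case of Lemma~\ref{lem:tree_end_types} (which has infinitely many vertex orbits), and once $G_\xi$ is focal, Lemma~\ref{lem:tree_end_types}(iv) identifies the minimum translation length with the number of $G_\xi$-orbits on $V\mc{T}$, giving $l\le 2$ immediately. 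I would recommend replacing your horosphere argument with this: the Burger--Mozes input is exactly the missing mechanism that links $2$-transitivity to a vertex-orbit bound.
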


\begin{proof}
Part (i) is clear from Lemma~\ref{lem:tree_end_types}.

For (ii), by \cite[Lemma~3.1.1]{BurgerMozes}, the property of having $2$-transitive action on $\partial \mc{T}$ is equivalent to the property that every stabilizer of a vertex of $\mc{T}$ acts transitively on the boundary.  In particular, in this case we see that $G$ has $l \le 2$ orbits on vertices.  From there it is easy to see that $G_{\xi}$ is focal, with minimum translation length $l$.
\end{proof}

Note that in the focal case of Lemma~\ref{lem:tree_end_types}, $G/G_{\xi'}$ is neither compact nor discrete, but there is also no obvious $\tCDI$ factorization of $G/G_{\xi'}$ coming from the geometry of the tree.  The following example shows that, while the coset space $G/G_{\xi'}$ can be irreducible, in general the number of factors needed for a $\tCDI$ factorization depends on the action. 

\begin{example}
Let $M = \Qb^d_p$ and let $U = \Zb^d_p$ be a compact open subgroup of $M$.  Let $\mc{T}$ be the tree formed by the cosets of $p^nU$ in $M$, where two cosets are adjacent if one has index $p^d$ in the other.  Let $s$ act on $M$ by multiplication by $p\inv$ and form the semidirect product $G = M \rtimes \grp{s}$.  Then there is a natural scale group action of $G$ on $\mc{T}$, where $m.(n+p^kU) = m+n+p^kU$ for $m \in M$ and $s.(n+p^kU) = p\inv n + p^{k-1}U$.  Now take $\xi$ to be the end specified by the ray $(p^{-i}U)_{i \ge 0}$, and $\xi'$ the end specified by the ray $(p^iU)_{i \ge 0}$.  Then we see that $G$ fixes $\xi$, while $G_{\xi'} = \grp{s}$, so as a $G$-space, in this case $X \cong G/\grp{s}$.  We then have a $\tCDI$ factorization
\[
G/\grp{s} \rightarrow G/(M_1 \rtimes \grp{s}) \rightarrow \dots \rightarrow G/(M_d \rtimes \grp{s}) = \{\ast\}
\]
where $M_1 \subseteq M_2 \dots \subseteq M_d = M$ is a chain of $\grp{s}$-invariant subgroups of $M$, with $M_i \cong \Qb^i_p$.
\end{example}

In this context though we can make some general observations about the closed subgroups containing $G_{\xi'}$.

\begin{prop}\label{prop:focal_almost_transitive}
Let $\mc{T}$ be a locally finite leafless tree and let $G$ be a closed subgroup of $\Aut(\mc{T})$ with focal almost boundary-transitive action, with fixed end $\xi$ and minimum translation length $l$. Let $X = \partial \mc{T} \setminus \{\xi\}$, let $\xi' \in X$, let $\mc{L}$ be the line between $\xi$ and $\xi'$ and let $H = G_{\xi'}$.
\begin{enumerate}[(i)]
\item For each $K \in \Sub(G/H)$ there is a tree $\mc{T}_K$, the \defbold{minimal tree} of $K$, which is the smallest $K$-invariant subtree of $\mc{T}$.
\item Given $K,K' \in \Sub(G/H)$, we have $\mc{T}_K \subseteq \mc{T}_{K'}$ if and only if $K \le K'$.  In particular, $\mc{L} \subseteq \mc{T}_K$ for all $K \in \Sub(G/H)$, with equality only if $K = H$.
\item Given $K \in \Sub(G/H) \setminus \{H\}$, then $K$ has focal almost boundary-transitive action on $\mc{T}_K$ with fixed end $\xi$ and minimum translation length $l$.  In particular, $K$ is compactly generated, and if $G$ is a scale group on $\mc{T}$ then $K$ is a scale group on $\mc{T}_K$.
\item Let $U$ be a compact open subgroup of $G$ containing $U_0$ and let $K \in \Sub(G/H) \setminus \{H\}$.  Then $K$ is not is $U$-thin over $H$, and in addition, given $v \in V\mc{L}$, then $K_v$ does not fix $s\inv v$.
\item Every element of $\Sub(G/H) \setminus \{H\}$ contains a minimal such element.
\end{enumerate}
\end{prop}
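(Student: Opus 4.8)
The plan is to build everything on a single geometric dictionary: to each $K\in\Sub(G/H)$ I attach the convex hull $\mc{T}_K$ of $K.\xi'\cup\{\xi\}$ in $\mc{T}$, that is, the union $\bigcup_{\eta\in K.\xi'}\mc{L}_\eta$ of the lines from $\xi$ to the points of the orbit $K.\xi'$. Since all these lines share the end $\xi$, their union is connected, so $\mc{T}_K$ is a $K$-invariant subtree, and it is leafless (every vertex lies on a bi-infinite line) and locally finite. For (i) I would note that $s\in H\le K$ is hyperbolic with axis $\mc{L}=\mc{L}_{\xi'}$, so any $K$-invariant subtree contains $\mc{L}$ and hence each translate $k\mc{L}=\mc{L}_{k\xi'}$; thus $\mc{T}_K$ is contained in every $K$-invariant subtree and is the desired minimal tree.

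For (iii) the crucial point is the identity $\partial\mc{T}_K\setminus\{\xi\}=K.\xi'$. The inclusion $\supseteq$ is clear since $\mc{L}\subseteq\mc{T}_K$ and $\mc{T}_K$ is $K$-invariant. For $\subseteq$ I would use that $K.\xi'=K/H$ is closed in $G/H\cong X$ (as $K$ is closed and $H\le K$): given $\eta\in\partial\mc{T}_K\setminus\{\xi\}$, the ray towards $\eta$ lies in $\mc{T}_K=\bigcup_{\zeta\in K.\xi'}\mc{L}_\zeta$, so the vertices $v_n$ descending to $\eta$ lie on lines $\mc{L}_{\zeta_n}$ with $\zeta_n\in K.\xi'$ agreeing with $\eta$ at least down to $v_n$; hence $\zeta_n\to\eta$, and the orbit being closed in $X$ forces $\eta\in K.\xi'$. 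This shows $K$ is transitive on $\partial\mc{T}_K\setminus\{\xi\}$, so the $K$-action on $\mc{T}_K$ is almost boundary-transitive; it contains the translation $s$, so it is focal, and its minimum translation length is $l$ because every hyperbolic element of $K$ keeps its $\mc{T}$-translation length (which is $\ge l$) while $s$ attains $l$. The remaining assertions of (iii) then follow by applying Lemma~\ref{lem:tree_end_types}(iv) and Lemma~\ref{lem:tree_special_types}(i) to the image of $K$ in $\Aut(\mc{T}_K)$, whose kernel (the pointwise fixator of $\mc{T}_K$) lies in the compact group $U_0$, so that compact generation lifts back to $K$. Part (ii) is now formal: $\mc{T}_K\subseteq\mc{T}_{K'}$ iff $\partial\mc{T}_K\subseteq\partial\mc{T}_{K'}$ (each tree is the hull of its boundary) iff $K.\xi'\subseteq K'.\xi'$, and the latter is equivalent to $K\le K'$ because $k\xi'=k'\xi'$ forces $k'^{-1}k\in G_{\xi'}=H\le K'$; the special case gives $\mc{L}=\mc{T}_H\subseteq\mc{T}_K$ with equality exactly when $K=H$.

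For (iv) I would treat the two assertions separately. For the claim about $K_v$, note that $K>H$ forces $\mc{T}_K\supsetneq\mc{L}$, so by (iii) and Lemma~\ref{lem:tree_end_types}(iv) the tree $\mc{T}_K$ has at least one branching $K$-orbit of vertices; since the $l$ orbits are exactly the horosphere-classes modulo $l$, among the $l$ vertices strictly above $s^{-1}v$ and at most $v$ on $\mc{L}$ some vertex $w$ has a second child $c_1$ in $\mc{T}_K$ besides the child $c_0$ pointing towards $\xi'$. Choosing an end $\eta$ of $\mc{T}_K$ through $c_1$ and $k\in K$ with $k\xi'=\eta$, I would correct $k$ by a power of $s$ (legitimate since $s\xi'=\xi'$ and the heights of vertices in one $K$-orbit agree modulo $l$) to a height-preserving $\tilde{k}\in K$; such $\tilde{k}$ fixes the ray above $w$ pointwise, hence fixes $v$, but sends $c_0$ to $c_1$ and therefore moves $s^{-1}v$, giving $\tilde{k}\in K_v$ not fixing $s^{-1}v$. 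For the non-$U$-thinness, the clean device is to reduce, via Lemma~\ref{lem:thin_criterion} with $S=\{s\}$ (valid since $U_0\le U$ gives $U_0\le\N_H(U)$, so $H=\grp{s,\N_H(U)}$), the $U$-thin condition to $K.\xi'\cap sU.\xi'\subseteq U.\xi'$ together with its $s^{-1}$-analogue; using that $K.\xi'$ is $\grp{s}$-invariant, these two inclusions say exactly that $A:=K.\xi'\cap U.\xi'$ is $\grp{s}$-invariant. But $U.\xi'$ is compact, so the confluence heights of its points with $\xi'$ are bounded, whereas $s$ raises this height by $l>0$; thus any $\zeta\in A$ with $\zeta\ne\xi'$ would have $s^n\zeta$ leaving $U.\xi'$, a contradiction, and $A\ne\{\xi'\}$ because $\xi'$ is non-isolated in the Cantor set $\partial\mc{T}_K$. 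Hence $K$ is not $U$-thin.

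Finally, (v) follows from (iv) together with the descending chain condition. For a fixed compact open $U\ge U_0$ I would apply Zorn's lemma, for reverse inclusion, to $\{L\in\Sub(G/H):H<L\le K\}$: the only thing to check is that a descending chain $(K_i)$ of such subgroups has intersection $L_\infty$ still strictly above $H$. If instead $L_\infty=H$, then $L_\infty/H$ is (trivially) $U$-thin and Lemma~\ref{lem:descending_chain} yields some $i_0$ with $K_i/H$ $U$-thin for all $i\ge i_0$, contradicting (iv) since each $K_i>H$. A maximal element for reverse inclusion produced by Zorn's lemma is then a minimal element of $\Sub(G/H)\setminus\{H\}$ contained in $K$. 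I expect the main obstacle to be (iii): pinning down that the orbit $K.\xi'$ is not merely dense in $\partial\mc{T}_K\setminus\{\xi\}$ but equal to it, so that the action is genuinely boundary-transitive and Lemma~\ref{lem:tree_end_types}(iv) applies. Together with the $\grp{s}$-invariance trick for the non-thinness in (iv), these are the two steps that require the most care.
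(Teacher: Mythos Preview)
Your argument is correct.  Parts (i)--(iii) and (v) follow the paper's proof essentially verbatim: the minimal tree is the union of the $K$-translates of $\mc{L}$, the identification $\partial\mc{T}_K\setminus\{\xi\}=K.\xi'$ uses closedness of the orbit, and (v) comes from (iv) via Lemma~\ref{lem:descending_chain}.

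Part (iv) is where you diverge from the paper.  The paper first computes $T_U(H)=H$ directly (showing $H$ is $U$-thick), and then argues that if $K/H$ were $U$-thin, Lemma~\ref{lem:thin_criterion} would give $K\cap HUH=H$, hence $K_w\le H$ for a suitable vertex $w$; a short geometric argument then shows $\xi'$ would be isolated in $K.\xi'$, forcing $K=H$.  The statement about $K_v$ not fixing $s^{-1}v$ is then \emph{deduced} from non-$G_v$-thinness by a second application of Lemma~\ref{lem:thin_criterion}.  You instead prove the two assertions independently: the $K_v$ claim by an explicit construction of a height-preserving element through a branching vertex of $\mc{T}_K$ in the window between $s^{-1}v$ and $v$, and non-$U$-thinness by translating the thin criterion into the $\grp{s}$-invariance of $A=K.\xi'\cap U.\xi'$ and ruling this out via confluence heights.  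Your route is more hands-on and avoids the explicit computation of $T_U(H)$; the paper's route is more uniform in that it extracts both conclusions from the single fact that $H$ is $U$-thick, illustrating the thickening machinery developed earlier.  Either way the content is the same: the key point is that $\bigcap_{n\in\Zb}s^nU.\xi'=\{\xi'\}$, which is exactly $T_U(H)=H$ in orbit language and exactly your confluence-height bound.
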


\begin{proof}
Write $H = U_0 \rtimes \grp{s}$ as in Lemma~\ref{lem:tree_end_types}(iv).  In particular, $H$ is generated by the compact set $S = U_0 \cup \{s\}$.

Let $K \in \Sub(G/H)$ and let $\mc{T}_K = \bigcup_{k \in K}k\mc{L}$.  Since $\xi$ is fixed, all the lines $k\mc{L}$ have a common end $\xi$, ensuring that $\mc{T}_K$ is connected and hence a subtree of $\mc{T}$.  Since $\mc{L}$ is the axis of translation for $s \in K$, we see that for all $k \in K$, then $k\mc{L}$ is the axis of the translation $ksk\inv \in K$.  Thus $\mc{T}_K$ is a union of axes of translation of $K$; since $\mc{T}_K$ is $K$-invariant, it is the smallest $K$-invariant subtree of $\mc{T}$.  This proves (i).

Given that $X = \partial \mc{T} \setminus \{\xi\}$ is $G$-equivariantly homeomorphic to $G/H$, we see that the $K$-orbit $K\xi'$ is closed in $X$, so $K\xi' \cup \{\xi\}$ is closed in $\partial T$.  Note that any closed subset of $\partial T$ that is not a singleton is the set of ends of some subtree of $T$; the minimality of $\mc{T}_K$ ensures that $\{\xi\}$ is maximal among proper closed $K$-invariant subsets of $\partial \mc{T}_K$, so in fact $K\xi' \cup \{\xi\} = \partial T_K$.  In particular, every end of $\mc{T}_K$ is an end of translation of $K$, which means that the set $\{k\mc{L} \mid k \in K\}$ is the set of all bi-infinite lines through $\mc{T}_K$ ending at $\xi$.  In particular, given $K,K' \in \Sub(G/H)$, we have
\[
\mc{T}_K \subseteq \mc{T}_{K'} \Leftrightarrow \{k\mc{L} \mid k \in K\} \subseteq \{k\mc{L} \mid k \in K'\} \Leftrightarrow K \le K',
\]
where the second equivalence is due to the fact that $K$ and $K'$ both contain the setwise stabilizer $H$ of $\mc{L}$ in $G$.  This proves (ii).  If $K \in \Sub(G/H) \setminus \{H\}$, then $\mc{T}_K$ properly contains $\mc{L}$.  In addition, $K$ acts with minimum translation length $l$ since this is the translation length of $s$ and also the minimum translation length of $G$.  As we have noted, $K$ acts transitively on $\partial \mc{T}_K \setminus \{\xi\}$; hence Lemma~\ref{lem:tree_end_types} applies to the action $(\mc{T}_K,K)$.  We then see that all types are ruled out except focal type, proving (iii).

Our next aim is to show that no element of $\Sub(G/H) \setminus \{H\}$ is $U$-thin over $H$, where $U$ is a compact open subgroup of $G$ containing $U_0$.  By Proposition~\ref{prop:thick} we have
\[
T_U(H) = \bigcap_{h \in H}hUH = \bigcap_{n \in \Zb}s^nU\grp{s};
\]
we claim that in fact $H = T_U(H)$, for which it is enough to show $T_H(U) \cap U = H \cap U$. Consider $T_n = s^nU\grp{s} \cap U$ for $n \in \Zb$.  Then $T_n$ does not contain any translations and hence stabilizes each horosphere around $\xi$; thus $T_n = s^nUs^{-n} \cap U$.  Now letting $n$ vary over $\Zb$, we see that
\[
T_U(H) \cap U = \bigcap_{n \in \Zb}(s^nUs^{-n} \cap U) \le \bigcap_{n \in \Zb}s^nUs^{-n} = U_0 \le H,
\]
as claimed.  Note that the fact that $U$ is open and contains $U_0$ means that $U$ contains $G_w$ for some $w \in V\mc{L}$.

Now consider $K \in \Sub(G/H)$ such that $K/H$ is $U$-thin; by Lemma~\ref{lem:thin_criterion}, we have $K \cap HUH = H$, so in particular $K_w \le H$.  Consider now a $K$-translate $k\mc{L}$ of $\mc{L}$ containing the vertex $w$.  After replacing $k$ with $ks^n$ for some $n \in \Zb$ we may assume that $k$ is an elliptic element of $K$; the fact that $k\mc{L}$ and $\mc{L}$ both contain $w$ then ensures $kw = w$, so $k \in H$ and hence $k\mc{L} = \mc{L}$.  Thus $\xi'$ is an isolated point in $K\xi'$; by part (iii) we deduce that $\partial \mc{T}_K = \{\xi,\xi'\}$ and hence $K = H$.  So no element of $\Sub(G/H) \setminus \{H\}$ is $U$-thin over $H$.

Given $K \in \Sub(G/H) \setminus \{H\}$ and $v \in V\mc{L}$, we have seen that $K/H$ is not $G_v$-thin.  Applying Lemma~\ref{lem:thin_criterion} again we see that
\[
K \cap (sG_v \cup s\inv G_v) \not\subseteq G_vH;
\]
since $sG_vs\inv$ contains $G_v$ and $s \in H$, we can rearrange to get
\[
K \cap G_v \not\subseteq s\inv G_vH.
\]
In other words, $K_v$ does not fix $s\inv v$.  This completes the proof of (iv).

Part (v) now follows from (iv) using Lemma~\ref{lem:descending_chain}.
\end{proof}

Proposition~\ref{prop:focal_almost_transitive}(v) shows that $(\mc{T},G)$ involves another almost boundary-transitive focal action $(\mc{T}_K,K)$ that is ``irreducible'' in the sense that $H = K_{\xi'}$ is a maximal closed subgroup of $K$ for every $\xi' \in \partial \mc{T}_K \setminus \{\xi\}$.  In general this does not produce a complete $\tCDI$ factorization of $G/H$, since one still has to factorize $G/K$.  Nevertheless, the general arguments for coset spaces still apply, and one can continue to analyse the groups $K' \in \Sub(G/K)$ by looking at their minimal trees $\mc{T}_{K'}$.  Proposition~\ref{prop:focal_almost_transitive}(iv) suggests an approach based on local actions, namely one use the action of $K'_v$ (for $v \in V\mc{L}$) on a ball of radius $l$ to distinguish the possibilities for $K'$ in an ascending chain.

\section{Irreducible coset spaces}\label{sec:irreducible}

\subsection{Introduction}

In the context of locally compact groups, it is natural to consider irreducible coset spaces $G/H$, where $G$ is a locally compact group and $H$ is a subgroup that is maximal among proper closed subgroups of $G$.  Theorem~\ref{thm:cdi}, Proposition~\ref{prop:cdi_maximal_in_G} and Corollary~\ref{cor:cdi_maximal} show that such coset spaces are important for understanding how compactly generated closed subgroups are embedded in a general locally compact group, and we can expect them to appear in many contexts.  In the classification of such coset spaces, it is convenient to regard $G/H$ and $(G/K)/(H/K)$ as equivalent, where $K = \bigcap_{g \in G}gHg\inv$, since the closed normal subgroup $K$ effectively makes no contribution to the properties of the action of $G$ on $G/H$; thus when discussing irreducible coset spaces, we will often assume that $G/H$ is \defbold{faithful}, that is, $\bigcap_{g \in G}gHg\inv = \triv$.  The result is that we are considering something similar to a primitive permutation group, except that the topology of $G$ is not necessarily the permutation topology (in particular, $H$ need not be open), and $H$ is not necessarily maximal among all proper subgroups of $G$ (in other words, $H$ could be contained in a proper dense subgroup).

A typology was recently established by Smith \cite{SmithPrimitive} of the closed primitive subdegree-finite permutation groups, generalizing the O'Nan--Scott theorem to this setting; in our terms, these correspond to the faithful irreducible coset spaces $G/H$ where $H$ is a compact open subgroup.  Smith's proof is by induction on the minimum nontrivial subdegree, in other words the minimum of $|H:H \cap gHg\inv|$ for $g \in G \setminus H$.  By contrast, in the context of the present article, we are rather more interested in the case that $H$ is neither compact nor open.  This presents the difficulties that there is no obvious basis of induction, and there are additional topological complications: for example, if $H$ is neither compact nor open, then the product of $H$ with a closed normal subgroup could be a proper dense subgroup of $G$.

In this section we begin the analysis of faithful irreducible coset spaces without the assumption of primitivity, with a focus on \lcsc groups that are compactly generated.  By analogy with the analysis of primitive permutation groups, we will mostly focus on the role of nontrivial closed normal subgroups, although compared to prior work on primitive permutation groups, we will be focusing on some rather basic questions, such as the number of minimal nontrivial closed normal subgroups.

\subsection{General observations}\label{sec:irreducible:general}

For this section we consider the situation where $G/H$ is a faithful irreducible coset space, but $G$ and $G/H$ are not necessarily locally compact.  Let us introduce some terminology.

\begin{defn}
Given a coset space $G/H$, say a closed normal subgroup $N$ of $G$ is \defbold{$G/H$-free} if $N \cap H = \triv$, a \defbold{$G/H$-supplement} if $NH = G$, and a \defbold{$G/H$-complement} if $N \cap H =\triv$ and $G = NH$.

Given a Hausdorff topological group $G$, write $\mc{M}_G$ for the set of minimal nontrivial closed normal subgroups of $G$.  We say $G$ is \defbold{monolithic} if the intersection $\Mon(G)$ of all nontrivial closed normal subgroups of $G$ is nontrivial.
\end{defn}

If $G$ is Polish and there is a closed normal $G/H$-complement $M$, we observe that $G = M \rtimes H$ as a topological group.

Let $G/H$ be a faithful irreducible coset space and let $N$ be a nontrivial closed normal subgroup.  We see that $G = \ol{NH}$, so $N$ is a $G/H$-supplement if and only if $NH$ is closed.  It follows that under certain hypotheses, $N$ is automatically a $G/H$-supplement: for example, this is the case if $N$ or $H$ is compact, or if $N$ or $H$ is open.

We can say the following about closed $H$-invariant subgroups of $G$.

\begin{lem}\label{lem:irreducible_invariant}
Let $G/H$ be a faithful irreducible coset space and let $K$ be a nontrivial closed $H$-invariant subgroup of $G$.  Then exactly one of the following holds:
\begin{enumerate}[(i)]
\item We have $K \le H = \N_G(K)$;
\item $K$ is a normal subgroup of $G$ that is not contained in $H$.
\end{enumerate}
\end{lem}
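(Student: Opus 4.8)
The plan is to reduce everything to an analysis of the normalizer $\N_G(K)$, exploiting the maximality of $H$. First I would record the one topological fact needed, namely that since $K$ is closed, the normalizer $\N_G(K)$ is a closed subgroup of $G$: for each fixed $k \in K$ the map $g \mapsto gkg\inv$ is continuous, so $\{g \in G \mid gkg\inv \in K\}$ is closed, and intersecting these over $k \in K$ together with the analogous sets defined using $g\inv$ exhibits $\N_G(K)$ as an intersection of closed sets. The assumption that $K$ is $H$-invariant says precisely that $H \le \N_G(K)$, so $\N_G(K)$ is a closed subgroup lying between $H$ and $G$.

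Next I would invoke irreducibility. Since $H$ is maximal among proper closed subgroups of $G$, the closed overgroup $\N_G(K)$ must equal either $H$ or $G$, and this dichotomy is exactly the split between (i) and (ii). If $\N_G(K) = H$, then because $K$ always normalizes itself we have $K \le \N_G(K) = H$, which is conclusion (i). If instead $\N_G(K) = G$, then $K \normal G$, giving the first half of (ii), and it only remains to check that $K \not\le H$.

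To finish case (ii) I would appeal to faithfulness. If $K \normal G$ and $K \le H$, then $K = gKg\inv \le gHg\inv$ for every $g \in G$, whence $K \le \bigcap_{g \in G} gHg\inv = \triv$, contradicting that $K$ is nontrivial; so $K \not\le H$. Finally, the two cases are mutually exclusive, since (i) asserts $K \le H$ while (ii) asserts $K \not\le H$; as at least one of them always occurs, exactly one does, giving the ``exactly one'' conclusion.

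I expect no genuine obstacle here: the only step requiring care is the closedness of $\N_G(K)$, which is where the hypothesis that $K$ is closed enters, and the rest is formal. The entire argument hinges on the single elementary observation that $K \le \N_G(K)$, combined with the strong dichotomy that maximality of $H$ imposes on closed overgroups of $H$.
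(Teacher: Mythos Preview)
Your argument is correct and follows essentially the same route as the paper: analyze the closed overgroup $\N_G(K)$ of $H$, apply maximality of $H$ to force $\N_G(K)\in\{H,G\}$, and use faithfulness to rule out $K\le H$ in the normal case. The paper's proof is terser but identical in substance; your added justification that $\N_G(K)$ is closed (via preimages of $K$ under conjugation maps) is a welcome elaboration of a step the paper takes for granted.
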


\begin{proof}
Clearly (i) and (ii) are mutually exclusive, and if $\N_G(K) = H$ then $K$ is a normal subgroup of $H$, so we may assume $\N_G(K) \nleq H$.  In that case, since $K$ is $H$-invariant, we then have $H < \N_G(K)$; since $K$ is closed, $\N_G(K)$ is a closed subgroup, so $\N_G(K)=G$; and then since $G/H$ is faithful, $K \nleq H$.
\end{proof}

In particular, given $M \in \mc{M}_G$, we can apply the dichotomy of Lemma~\ref{lem:irreducible_invariant} to $H$-invariant subgroups of $M$.

\begin{cor}\label{cor:irreducible_invariant}
Let $G/H$ be a faithful irreducible coset space and let $K$ be a closed normal subgroup of $G$.  Then $\bigcap_{k \in K}k(K \cap H)k\inv = \triv$.  If $K \in \mc{M}_G$, then $K \cap H$ is the unique largest proper closed $H$-invariant subgroup of $K$.
\end{cor}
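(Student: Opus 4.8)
The plan is to derive both assertions from the dichotomy of Lemma~\ref{lem:irreducible_invariant} together with faithfulness; the minimality hypothesis on $K$ enters only for the second assertion.

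For the first assertion, I would set $L = \bigcap_{k \in K}k(K \cap H)k\inv$, the normal core of $K \cap H$ in $K$, and observe three things: $L$ is closed and normal in $K$ (being an intersection of $K$-conjugates of the closed subgroup $K \cap H$), $L$ is contained in $H$, and $L$ is $H$-invariant. The last point is the only one requiring a short computation: since $K \normal G$, conjugation by any $h \in H$ permutes the factors $k(K \cap H)k\inv$ among themselves---here one uses that $K \cap H$ is itself $H$-invariant and that $hkh\inv$ ranges over $K$ as $k$ does---so $hLh\inv = L$. Now suppose $L \neq \triv$. Since $L \le H$, Lemma~\ref{lem:irreducible_invariant} puts us in case (i), giving $\N_G(L) = H$; but $L \normal K$ forces $K \le \N_G(L) = H$. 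Combined with $K \normal G$ and faithfulness, this yields $K \le \bigcap_{g \in G}gHg\inv = \triv$, contradicting $\triv \neq L \le K$. Hence $L = \triv$.

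For the second assertion, assume $K \in \mc{M}_G$. I would first note that $K \cap H$ is a proper closed $H$-invariant subgroup of $K$: it is proper because $K \le H$ would again force $K = \triv$ by faithfulness, contrary to $K \in \mc{M}_G$. It then remains to show every proper closed $H$-invariant subgroup $J \le K$ lies in $K \cap H$, equivalently $J \le H$. Applying Lemma~\ref{lem:irreducible_invariant} to a nontrivial such $J$, case (ii) would make $J$ a nontrivial closed normal subgroup of $G$ with $J \le K$, and minimality of $K$ would give $J = K$, contradicting properness; so case (i) must hold, whence $J \le H$. The trivial subgroup is of course contained in $K \cap H$ as well, so $K \cap H$ is indeed the unique largest proper closed $H$-invariant subgroup of $K$.

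The only genuine obstacle is the $H$-invariance of $L$ in the first part; everything else is a direct application of the dichotomy, with faithfulness used to exclude the possibility $K \le H$ and minimality used to exclude the ``normal in $G$'' alternative of Lemma~\ref{lem:irreducible_invariant}.
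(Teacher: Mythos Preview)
Your proof is correct and follows essentially the same route as the paper's: both arguments establish that $L$ is closed, $H$-invariant, contained in $H$, and normalized by $K$, then invoke the dichotomy of Lemma~\ref{lem:irreducible_invariant} to force $L=\triv$; the second assertion is handled identically by ruling out case~(ii) via minimality. The only difference is that you spell out the $H$-invariance of $L$ in more detail than the paper does, which is fine.
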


\begin{proof}
We see that the intersection $L = \bigcap_{k \in K}k(K \cap H)k\inv$ is a closed normal subgroup of $H$ such that $\N_G(L) \nleq H$, so by Lemma~\ref{lem:irreducible_invariant}, $L = \triv$.

Now suppose $K \in \mc{M}_G$.  Given a proper nontrivial $H$-invariant subgroup $R$ of $K$, then $R$ cannot be normal in $G$ by hypothesis, so by Lemma~\ref{lem:irreducible_invariant}, $R \le H$ and hence $R \le K \cap H$.  On the other hand, $K \cap H$ is itself clearly a closed $H$-invariant subgroup of $K$, and we have $K \cap H < K$ since $H$ does not contain any nontrivial normal subgroup of $G$.
\end{proof}

We can say a bit more about which subgroups are $G/H$-free.  In particular, we see that every closed normal subgroup that is not $G/H$-free must have trivial centralizer.

\begin{lem}\label{lem:irreducible_free}
Let $G/H$ be a faithful irreducible coset space.
\begin{enumerate}[(i)]
\item Let $N$ be a nontrivial closed normal subgroup of $G$.  Then $\CC_H(N) = \triv$.
\item Let $K$ be a closed subgroup of $G$ such that $\bigcap_{h \in H}\CC_G(hKh\inv) \nleq H$.  Then $K \cap H = \triv$.
\item Let $N$ be a closed normal subgroup of $G$ such that $\CC_G(N) \neq \triv$.  Then $N \cap H = \triv$.
\item Every normal subgroup of $G$ with nontrivial centre is $G/H$-free.
\item For every proper subset $\mc{M}'$ of $\mc{M}_G$, we have $\cgrp{N \in \mc{M}'} \cap H = \triv$.
\end{enumerate}
\end{lem}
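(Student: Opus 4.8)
The plan is to prove (i) directly from Lemma~\ref{lem:irreducible_invariant} together with the faithfulness hypothesis, and then to obtain (ii)--(v) as a short cascade, each reducing to an earlier part. The recurring mechanism is that the centralizer of a normal subgroup is again normal, and that an $H$-relative centralizer of a \emph{normal} subgroup is $H$-invariant, which is exactly what makes Lemma~\ref{lem:irreducible_invariant} applicable.

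For (i), set $L = \CC_H(N)$ and suppose towards a contradiction that $L \neq \triv$. The key observation is that $L$ is a closed $H$-invariant subgroup of $G$: for $h \in H$, $\ell \in L$ and $n \in N$, normality of $N$ gives $h\inv n h \in N$, which $\ell$ centralizes, and a direct computation then shows that $h\ell h\inv$ centralizes $n$, so $h\ell h\inv \in L$. Since $L \le H$, Lemma~\ref{lem:irreducible_invariant} cannot place us in its case (ii), so we are in case (i), giving $\N_G(L) = H$. On the other hand $L$ centralizes $N$, hence $N$ centralizes $L$ and so $N \le \CC_G(L) \le \N_G(L) = H$. But $N$ is a nontrivial closed normal subgroup, so faithfulness forces $N \le \bigcap_{g \in G}gHg\inv = \triv$, a contradiction; thus $L = \triv$.

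For (ii), write $D = \bigcap_{h \in H}\CC_G(hKh\inv)$. An argument of the same flavour as in (i) shows $D$ is a closed $H$-invariant subgroup (conjugating by $h' \in H$ merely reindexes the factors $\CC_G(hKh\inv)$), and by hypothesis $D \nleq H$, so Lemma~\ref{lem:irreducible_invariant} places us in its case (ii): $D$ is a nontrivial closed normal subgroup of $G$. Taking $h = 1$ gives $D \le \CC_G(K)$, so $K$ centralizes $D$, whence $K \cap H \le \CC_H(D)$, which is trivial by part (i). Part (iii) is then the special case $K = N$: normality makes each $hNh\inv$ equal to $N$, so the intersection defining $D$ is just $\CC_G(N)$, a closed normal subgroup that is nontrivial by hypothesis and hence not contained in $H$ by faithfulness, so (ii) applies. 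Part (iv) follows since $\Z(N) = N \cap \CC_G(N) \le \CC_G(N)$, so a nontrivial centre gives $\CC_G(N) \neq \triv$ and (iii) applies. For (v), choose $M \in \mc{M}_G \setminus \mc{M}'$ (possible as $\mc{M}'$ is proper); distinct members of $\mc{M}_G$ have trivial intersection and so commute elementwise, whence $M$ centralizes every $N \in \mc{M}'$ and therefore centralizes $K := \cgrp{N \in \mc{M}'}$ by continuity. Thus $\CC_G(K) \neq \triv$, and as $K$ is a closed normal subgroup, (iii) yields $K \cap H = \triv$.

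The only real content lies in part (i), and the main thing to get right there is the verification that $\CC_H(N)$ is $H$-invariant (equivalently, normal in $H$, which is immediate from $N \trianglelefteq G$), since this is precisely what licenses the appeal to Lemma~\ref{lem:irreducible_invariant}. Once (i) is secured, parts (ii)--(v) are essentially bookkeeping, the repeated point being that the centralizer of a normal subgroup is normal and therefore, once nontrivial, cannot lie inside $H$.
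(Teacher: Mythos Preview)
Your argument is correct and follows essentially the same strategy as the paper, with two minor deviations worth noting. For (i), the paper avoids the contradiction detour: it observes directly that $\CC_H(N)$ is normalized by both $H$ (as you check) and $N$ (since $N$ centralizes it), hence by $\ol{NH}=G$, and therefore $\CC_H(N)$ is a normal subgroup of $G$ contained in $H$, which faithfulness forces to be trivial. Your route through Lemma~\ref{lem:irreducible_invariant} is equally valid but slightly longer. For (iv), be careful: the statement does not assume $N$ is closed, so you cannot invoke (iii) for $N$ as written. The fix is immediate---since $G$ is Hausdorff, $\CC_G(N)=\CC_G(\ol{N})$, so apply (iii) to $\ol{N}$ to get $\ol{N}\cap H=\triv$ and hence $N\cap H=\triv$; the paper instead reproves the needed normality directly for $\ol{N}\cap H$. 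Parts (ii), (iii), (v) match the paper's proof essentially verbatim (the paper cites (ii) rather than (iii) in (v), but that is immaterial).
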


\begin{proof}
(i)
We see that $\CC_H(N)$ is normalized by both $N$ and $H$, so it is normal in $\ol{NH} = G$.  Since $G/H$ is faithful for $G$, it follows that $\CC_H(N)=\triv$.

(i)
Let $C = \bigcap_{h \in H}\CC_G(hKh\inv)$.  Then $C$ is a closed $H$-invariant subgroup of $G$, hence a normal subgroup of $G$ by Lemma~\ref{lem:irreducible_invariant}, so $\CC_H(C) = \triv$ by (i).  Clearly $K$ commutes with $C$, so we have $K \cap H = \triv$.

(iii) follows immediately from (i) and (ii).

(iv)
Let $N$ be a normal subgroup of $G$ with nontrivial centre $Z$; note that $Z$ is central in $\ol{N}$ and normal in $G$.  Then $\ol{N} \cap H$ is a closed normal subgroup of $H$ that is also normalized by $Z$; however, $G = \ol{ZH}$, so in fact $\ol{N} \cap H$ is normalized by $G$, and hence $\ol{N} \cap H = \triv$.  In particular, $N \cap H = \triv$, so $N$ is $G/H$-free.

(v)
We see that distinct elements of $\mc{M}_G$ centralize each other, so if $\mc{M}'$ is a proper subset of $\mc{M}_G$, say $M \in \mc{M}_G \setminus \mc{M}'$, then $M$ commutes with $K = \cgrp{N \in \mc{M}'}$.  It then follows by (ii) that $K$ has trivial intersection with $H$.
\end{proof}

If there is a minimal closed normal subgroup that is $G/H$-free, we can regard $G/H$ as a ``compression'' of a faithful irreducible coset space coming from a semidirect product.

\begin{prop}\label{prop:irreducible_free:semidirect}
Let $G$ be a Hausdorff topological group, let $N$ be a closed normal subgroup of $G$ and let $H$ be a closed subgroup of $G$.
\begin{enumerate}[(i)]
\item Suppose that $G = N \rtimes H$.  Then $G/H$ is a faithful irreducible coset space if and only if $\CC_H(N) = \triv$ and there is no proper nontrivial closed $H$-invariant subgroup of $N$.
\item Suppose that $G = N \rtimes H$, that $G/H$ is a faithful irreducible coset space and $\CC_G(N) > \triv$.  Then there is a continuous injective homomorphism from $\CC_G(N)$ to $N$ with dense image.
\item Suppose that $M \in \mc{M}_G$, that $G/H$ is a faithful irreducible coset space and that $M \cap H = \triv$; let $\alpha$ be the conjugation action of $H$ on $M$.  Then $(M \rtimes_{\alpha} H)/H$ is a faithful irreducible coset space, and the map $(m,h) \mapsto mh$ is a continuous injective homomorphism from $M \rtimes H$ to $G$ with dense image.
\end{enumerate}
\end{prop}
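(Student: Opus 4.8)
The plan is to prove (i) first and then deduce (ii) and (iii) from it together with Lemma~\ref{lem:irreducible_free}(i) and Corollary~\ref{cor:irreducible_invariant}. For (i) I would exploit the topological semidirect product structure to reduce both conditions to statements about $N$. Since $G = N \rtimes H$, the multiplication map $(n,h) \mapsto nh$ is a homeomorphism $N \times H \to G$, so $K \mapsto K \cap N$ is an inclusion-preserving bijection from the closed subgroups $K$ with $H \le K \le G$ onto the closed $H$-invariant subgroups of $N$; here $K = (K \cap N)H$ because $H \le K$, and conversely $LH$ is closed because $L \times H$ is closed in $N \times H$. Under this correspondence, $H$ is maximal among proper closed subgroups of $G$ exactly when the only closed $H$-invariant subgroups of $N$ are $\triv$ and $N$. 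For the faithfulness condition I would show that the core $\bigcap_{g \in G}gHg\inv$ coincides with $\CC_H(N)$: on one hand $\CC_H(N)$ is centralized by $N$ and normalized by $H$, hence normal in $NH = G$, and it lies in $H$; on the other hand, if $K \normal G$ with $K \le H$, then $[N,K] \le N \cap H = \triv$, so $K \le \CC_H(N)$. Thus faithfulness is equivalent to $\CC_H(N) = \triv$, and combining the two equivalences yields (i).

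For (ii), write $C = \CC_G(N)$ and let $\pi_N : G \to N$ be the (continuous) $N$-coordinate projection of $G \cong N \times H$. The key computation is that if $c = nh \in C$ with $n \in N$, $h \in H$, then centralizing $N$ forces $hmh\inv = n\inv m n$ for all $m \in N$; feeding this into the product rule shows that $\pi_N$ restricted to $C$ is an \emph{anti}-homomorphism, so $\psi(c) := \pi_N(c)\inv$, i.e. $\psi(nh) = n\inv$, is a continuous homomorphism $C \to N$. Its kernel is $C \cap H = \CC_H(N)$, which is trivial by (i), so $\psi$ is injective. Finally $\ol{\psi(C)}$ is a closed $H$-invariant subgroup of $N$, since $C$ is normal in $G$ and so conjugation by $H$ permutes the $N$-components of its elements; by (i) it must be $\triv$ or $N$, and it cannot be $\triv$, for that would force $C \le H$ and hence $C \le \CC_H(N) = \triv$, contradicting $\CC_G(N) > \triv$. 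Thus $\psi(C)$ is dense in $N$. I expect this step — noticing that the naive projection gives only an anti-homomorphism, and then running the density argument through irreducibility — to be the main obstacle.

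For (iii), I would first check that $\mu : M \rtimes_{\alpha} H \to G$, $(m,h) \mapsto mh$, is a continuous homomorphism by a direct computation using $\alpha_h(m) = hmh\inv$, with kernel $\{(m,h) \mid m = h\inv\}$; since $m \in M$, $h\inv \in H$ and $M \cap H = \triv$, this kernel is trivial, so $\mu$ is injective, and its image $MH$ is dense because $G = \ol{MH}$ for any nontrivial closed normal subgroup of a faithful irreducible coset space. It then remains to verify that $(M \rtimes_{\alpha} H)/H$ is faithful and irreducible, which I would obtain by applying (i) to the triple $(M \rtimes_{\alpha} H, M, H)$: the condition $\CC_H(M) = \triv$ holds by Lemma~\ref{lem:irreducible_free}(i), and the condition that $M$ has no proper nontrivial closed $H$-invariant subgroup holds because, by Corollary~\ref{cor:irreducible_invariant} applied with $K = M \in \mc{M}_G$, the unique largest proper closed $H$-invariant subgroup of $M$ equals $M \cap H = \triv$. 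This completes (iii).
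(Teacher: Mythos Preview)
Your proposal is correct and follows essentially the same approach as the paper: the bijection in (i), the map $\psi(nh)=n^{-1}$ in (ii), and the appeal to (i) together with Lemma~\ref{lem:irreducible_free}(i) and Corollary~\ref{cor:irreducible_invariant} in (iii) all match the paper's argument. The only cosmetic difference is that you spell out why $\psi$ is a homomorphism via the anti-homomorphism observation, whereas the paper simply asserts this as ``an easy calculation'' after writing elements of $\CC_G(N)$ in the form $(n^{-1},h)$ with $\alpha(h)=\alpha(n)$.
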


\begin{proof}
Consider first the case $G = N \rtimes H$.  We see that $\bigcap_{g \in G}gHg\inv = \CC_H(N)$, so $G$ acts faithfully on $G/H$ if and only if $\CC_H(N)  = \triv$.  We then see that every closed overgroup $L$ of $H$ is of the form $(L \cap N)H$, where $L \cap N$ is closed and $H$-invariant; conversely, given a closed $H$-invariant subgroup $R$ of $N$, then $L = R \rtimes H$ is a closed overgroup of $H$ and $R = L \cap N$.  Thus $H$ is maximal among proper closed subgroups of $G$ if and only if there is no proper nontrivial closed $H$-invariant subgroup of $N$.  This proves (i).

For (ii), we assume in addition that $G/H$ is a faithful irreducible coset space and $\CC_G(N) > \triv$.  Let $\alpha: G \rightarrow \Aut(N)$ be the conjugation action.  Then $\CC_G(N)$ consists of pairs $(n\inv,h)$ where $\alpha(h) = \alpha(n)$.  In particular, since $H$ acts faithfully on $N$ by conjugation, we see that the map
\[
\theta: \CC_G(N) \rightarrow N; \; (n\inv,h) \mapsto n
\]
is injective; clearly $\theta$ is continuous, and an easy calculation shows that $\theta$ is a homomorphism.  The image $\theta(\CC_G(N))$ is $H$-invariant; thus it is dense by part (i).  This completes the proof of (ii).

For (iii), we assume instead that $G$ is a Hausdorff topological group with $M \in \mc{M}_G$ and a closed subgroup $H$ such that $G/H$ is a faithful irreducible coset space.  We have $\CC_H(M) = \triv$ by Lemma~\ref{lem:irreducible_free}(i), and by Corollary~\ref{cor:irreducible_invariant} there is no proper nontrivial closed $H$-invariant subgroup of $M$.  Thus $(M \rtimes_{\alpha} H)/H$ is a faithful irreducible coset space by (i).  The map $(m,h) \mapsto mh$ is a continuous injective homomorphism from $M \rtimes H$ to $G$ with image $MH$; since $MH$ is a subgroup of $G$ properly containing $H$, it is dense.  This completes the proof of (iii).
\end{proof}

In the situation that Proposition~\ref{prop:irreducible_free:semidirect}(iii) applies, we will say that $G/H$ is of \defbold{compressed semidirect type} and $(M \rtimes_{\alpha} H)/H$ is a \defbold{irreducible semidirect form} of $G/H$.  Note that irreducible semidirect forms are not necessarily unique, since we can have $|\mc{M}_G|>1$.

Given a topologically characteristically simple group $M$, we can certainly form the faithful irreducible coset space $(M \rtimes H)/H$, where $H = \Aut(M)$ with the discrete topology.  So every topologically characteristically simple group can appear as the group $M$ in Proposition~\ref{prop:irreducible_free:semidirect}(iii).  Moreover, if $M = A/B$ occurs as a chief factor of some topological group $K$, then $K$ acts on $A/B$ by conjugation, with kernel $\CC_K(M)$, and we again have a faithful irreducible coset space $G/H$ where $H = K/\CC_K(M)$ and $G = M \rtimes H$.  In the latter case, if $K$ is Polish, locally compact, or compactly generated locally compact, then so is $G$.  There are then other possibilities for faithful coset spaces $(M \rtimes H)/H$, where $H$ does not induce any inner automorphisms of $M$, but nevertheless acts in such a way that there is no proper nontrivial closed $H$-invariant subgroup of $M$.  Given the apparent diversity of chief factors of \lcsc groups, even in the compactly generated case, and the limited knowledge of the automorphism groups of the topologically characteristically simple groups occurring in this context, there is not much more we can say at this level of generality.

Note that (compressed) semidirect type account for all faithful irreducible coset spaces $G/H$ where $G$ has an abelian minimal closed normal subgroup.

\begin{cor}\label{cor:irreducible_abelian}
Let $G/H$ be a faithful irreducible coset space and suppose that $M \in \mc{M}_G$ is abelian.  Then $G/H$ is of compressed semidirect type with an irreducible semidirect form $(M \rtimes H)/H$.
\end{cor}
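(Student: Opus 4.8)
The plan is to reduce immediately to Proposition~\ref{prop:irreducible_free:semidirect}(iii), whose hypotheses are that $M \in \mc{M}_G$, that $G/H$ is faithful irreducible, and that $M$ is $G/H$-free, i.e.\ $M \cap H = \triv$. The first two hypotheses hold by assumption, so the entire content of the proof is to verify the freeness condition $M \cap H = \triv$.

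To do this, I would exploit that $M$ is abelian. Since $M$ is a minimal nontrivial closed normal subgroup, it is in particular nontrivial, and being abelian it equals its own centre; thus $M$ is a normal subgroup of $G$ with nontrivial centre. Lemma~\ref{lem:irreducible_free}(iv) then applies directly and yields that $M$ is $G/H$-free, that is, $M \cap H = \triv$. (Equivalently, one can note $M \le \CC_G(M)$, so $\CC_G(M) \neq \triv$, and invoke Lemma~\ref{lem:irreducible_free}(iii) to the same effect; either route is a one-line application.)

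With $M \cap H = \triv$ established, all three hypotheses of Proposition~\ref{prop:irreducible_free:semidirect}(iii) are in place. That proposition then tells us that, writing $\alpha$ for the conjugation action of $H$ on $M$, the coset space $(M \rtimes_{\alpha} H)/H$ is itself faithful and irreducible, and the multiplication map $(m,h) \mapsto mh$ is a continuous injective homomorphism $M \rtimes_\alpha H \to G$ with dense image. By the definition of compressed semidirect type recorded just after Proposition~\ref{prop:irreducible_free:semidirect}, this is precisely the statement that $G/H$ is of compressed semidirect type with irreducible semidirect form $(M \rtimes_\alpha H)/H$, which completes the argument.

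There is no genuine obstacle here: the corollary is an immediate specialization of the preceding results, and the only thing being used beyond bookkeeping is the elementary observation that an abelian normal subgroup has nontrivial centre (and hence triggers the $G/H$-freeness lemma). The sole point worth a moment's care is confirming that $M$ is nontrivial, which is automatic from $M \in \mc{M}_G$, so that $\CC_G(M) \supseteq M$ is genuinely nontrivial rather than possibly reducing to $\triv$.
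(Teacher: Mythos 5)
Your argument is correct and is essentially identical to the paper's own proof: the paper likewise applies Lemma~\ref{lem:irreducible_free}(iv) to the abelian (hence nontrivially-centred) minimal normal subgroup $M$ to get $M \cap H = \triv$, and then invokes Proposition~\ref{prop:irreducible_free:semidirect}(iii) to conclude. No differences worth noting.
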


\begin{proof}
By Lemma~\ref{lem:irreducible_free}(iv) we have $M \cap H = \triv$; hence by Proposition~\ref{prop:irreducible_free:semidirect}, the semidirect product $M \rtimes H$ yields a faithful irreducible coset space $(M \rtimes H)/H$, and we have a continuous injective homomorphism from $M \rtimes H$ to $G$ with dense image.
\end{proof}

\begin{rmk}
In the case where $M \in \mc{M}_G$ is an abelian \lcsc group, the possibilities for $M$ as a topological group were classified in \cite{Reid_abelian}; they can be listed as
\[
C^{(\kappa)}_p; \; C^{\aleph_0}_p; \Fb_p((t));  \; \Rb^n; \; \Qb^{(\kappa)}; \; \widehat{\Qb}^{\kappa}; \; \Qb^n_p; \; \Qb_p(\aleph_0),
\]
where $p$ is a prime number and $1 \le \kappa \le \aleph_0$.  Here $A^{(\kappa)}$ is a direct sum and $A^{\kappa}$ is a direct product of $\kappa$ copies of $A$; $\widehat{\Qb}$ is the Pontryagin dual of $\Qb$; and $\Qb_p(\aleph_0)$ is the set of functions from $\Nb$ to $\Qb_p$ with all but finitely many values in $\Zb_p$, equipped with pointwise multiplication and with $\Zb^{\aleph_0}_p$ embedded as a compact open subgroup.  The structure of $C^{(\kappa)}_p$, $\Rb^n$, $\Qb^{(\kappa)}$ and $\Qb^n_p$ as abelian topological groups is described by regarding them as vector spaces over the appropriate locally compact field. The groups $\widehat{\Qb}^{\kappa}$ and $C^{\aleph_0}_p$ have the same automorphism groups as $\Qb^{(\kappa)}$ and $C^{(\aleph_0)}_p$ respectively, via Pontryagin duality. The group $\Qb_p(\aleph_0)$, is a $\Zb_p$-module rather than a $\Qb_p$-vector space, but as a locally compact abelian group, it is self-dual and can even be thought of as a kind of ``separable $p$-adic Hilbert space'': see \cite{CT}; one can perhaps regard $\Fb_p((t))$ as something analogous in positive characteristic.  So the classification of faithful irreducible coset spaces of the form $(M \rtimes H)/H$, where $M$ is an abelian \lcsc group, essentially becomes a problem of representation theory.  It is less clear how to classify the compressed forms of $(M \rtimes H)/H$, however.
\end{rmk}

\subsection{Compact normal subgroups}

We can put some restrictions on the compact normal subgroups of a faithful irreducible coset space.

\begin{lem}\label{lem:irreducible:minimal_compact}
Let $G/H$ be a faithful irreducible coset space.
\begin{enumerate}[(i)]
\item There is a neighbourhood $O$ of the identity that does not contain any nontrivial normal subgroup of $G$.
\item Every nontrivial compact normal subgroup of $G$ contains some $M \in \mc{M}_G$.
\item Every nontrivial compact normal subgroup of $G$ is a $G/H$-supplement.
\end{enumerate}
\end{lem}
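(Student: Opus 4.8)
My plan is to treat the three parts in the order (i), (ii), (iii), since (ii) will rely on (i) and (iii) is essentially immediate.

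For (i), I would transport the question into the coset space $G/H$ and exploit regularity. Since $H$ is a proper closed subgroup, pick $g_0 \in G \setminus H$; writing $q \colon G \to G/H$ for the quotient map and $o = q(1)$ for the base point, the point $x_0 := q(g_0)$ is distinct from $o$. As $G/H$ is regular (being the quotient of a Hausdorff topological group by a closed subgroup), the point $o$ has a neighbourhood base of closed sets, so I can choose an open $U \ni o$ with $\overline{U} \subseteq (G/H)\setminus\{x_0\}$, and then set $O := q\inv(U)$, an open identity neighbourhood. To see that $O$ contains no nontrivial normal subgroup, suppose $N \trianglelefteq G$ is nontrivial. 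Faithfulness forces $N \not\le H$ (otherwise $N \le \bigcap_{g} gHg\inv = \triv$), so $NH$ is a subgroup strictly containing $H$; by irreducibility $\overline{NH} = G$, so $g_0$ is a limit of a net $n_ih_i$ with $n_i \in N$, $h_i \in H$. Applying $q$ gives $q(n_i) = q(n_ih_i) \to x_0$. If $N \subseteq O$ then every $q(n_i) \in U$, whence $x_0 \in \overline{U}$, contradicting the choice of $U$. Thus $N \not\subseteq O$.

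For (ii), let $N$ be a nontrivial compact normal subgroup and consider the set $\mc{P}$ of nontrivial closed normal subgroups of $G$ contained in $N$, ordered by reverse inclusion; I would apply Zorn's lemma. The only delicate point is that a descending chain $(N_\alpha)$ in $\mc{P}$ has nontrivial intersection, and this is exactly where compactness and part (i) enter. Taking the neighbourhood $O$ from (i), each $N_\alpha$ is a nontrivial normal subgroup, so $N_\alpha \not\subseteq O$; hence $N_\alpha \cap (G \setminus O)$ is a nonempty closed subset of the compact group $N$. These sets form a chain of nonempty compact sets, so their intersection is nonempty and lies in $\bigcap_\alpha N_\alpha$ but outside $O$; as $1 \in O$, this shows $\bigcap_\alpha N_\alpha \ne \triv$. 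Zorn then produces a minimal element $M$ of $\mc{P}$; any nontrivial closed normal subgroup of $G$ properly inside $M$ would again lie in $\mc{P}$, so minimality in $\mc{P}$ upgrades to minimality in $G$, giving $M \in \mc{M}_G$ with $M \le N$.

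For (iii), if $N$ is a nontrivial compact normal subgroup then, as in (i), $N \not\le H$ and hence $\overline{NH} = G$; since $N$ is compact and $H$ is closed, $NH$ is closed, so $NH = \overline{NH} = G$, i.e.\ $N$ is a $G/H$-supplement. The main obstacle is part (i): the subtlety is that the normal subgroups in question need not be closed and there is no smallest one to test, so the argument must produce a single neighbourhood $O$ that simultaneously excludes all of them, and it is the density statement $\overline{NH} = G$ coming from irreducibility that makes one $O$ suffice. The role of (i) in (ii) is equally essential — without compactness a descending chain of nontrivial closed normal subgroups can shrink to $\triv$ (as in $\prod_{\Nb} C_2$), and it is precisely the combination of (i) with the compactness of $N$ that rescues the descending chain condition.
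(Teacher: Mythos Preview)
Your proof is correct and follows essentially the same approach as the paper's. The only cosmetic difference is in part (i): the paper observes that $O = q\inv(U)$ is a union of left $H$-cosets, so $N \subseteq O$ immediately gives $NH \subseteq O$ and hence $\overline{NH} \subseteq \overline{O} \neq G$, forcing $\overline{NH} = H$ by maximality; you instead pass to a net in $NH$ and push it through $q$, which amounts to the same thing.
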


\begin{proof}
(i)
The coset space $G/H$ is a Hausdorff space with more than one point, so it has an open neighbourhood of the trivial coset whose closure is not all of $G/H$.  We can write this open neighbourhood as $O/H$, where $O$ is an open subset of $G$ formed as a union of left $H$-cosets; in particular $O$ is an identity neighbourhood, whose closure is not all of $G$.  Since $H$ is a maximal closed subgroup, it follows that if $K$ is any (not necessarily closed) subgroup of $G$ such that $H \le K \subseteq O$, then $K = H$.  In particular, if $N$ is a normal subgroup of $G$ such that $N \subseteq O$, then $NH \subseteq O$, from which it follows that $NH = H$; since $\bigcap_{g \in G}gHg\inv = \triv$, it follows that $N = \triv$.

(ii)
By Zorn's lemma, it suffices to show that the intersection $K$ of any descending chain $(K_i)_{i \in I}$ of nontrivial compact normal subgroups is nontrivial.  Indeed, by part (i), we see that given such a chain, for each $i \in I$ the set $K_i \setminus O$ is nonempty.  Moreover, $K_i \setminus O$ is compact, since it is an intersection of a compact set with a closed set; thus $K \setminus O$ is nonempty and hence $K$ is nontrivial.

(iii)
Given a nontrivial compact normal subgroup $K$ of $G$, we see that $K \nleq H$ and $KH$ is closed; thus $KH = G$.
\end{proof}

\begin{cor}\label{cor:irreducible:minimal_compact}
Let $G/H$ be a faithful irreducible coset space and suppose $G$ has a nontrivial compact normal subgroup $N$.  Then $N$ contains some $M \in \mc{M}_G$ and $G = MH$.  If $M \cap H = \triv$, then $G \cong M \rtimes H$ as topological groups.
\end{cor}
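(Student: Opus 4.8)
The plan is to extract both assertions from Lemma~\ref{lem:irreducible:minimal_compact}, with only the topological semidirect decomposition calling for a short separate argument that exploits compactness of $M$. First I would apply Lemma~\ref{lem:irreducible:minimal_compact}(ii) to the nontrivial compact normal subgroup $N$, obtaining some $M \in \mc{M}_G$ with $M \le N$. As a closed subgroup of the compact group $N$, the group $M$ is itself compact, and it is normal in $G$ by definition of $\mc{M}_G$. Thus $M$ is a nontrivial compact normal subgroup of $G$, so Lemma~\ref{lem:irreducible:minimal_compact}(iii) gives $MH = G$. This already settles the first two claims.

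It then remains to treat the case $M \cap H = \triv$, where $M$ becomes a $G/H$-complement. Here I would consider the multiplication map $\mu \colon M \rtimes_{\alpha} H \to G$, $(m,h) \mapsto mh$, where $\alpha$ is the conjugation action of $H$ on $M$; since $M \cap H = \triv$ and $MH = G$, this $\mu$ is a continuous bijective homomorphism of topological groups, and the task is to verify that its inverse decomposition map is continuous. I would argue this with nets, using compactness of $M$. Given $g_i \to g$ in $G$, write $g_i = m_i h_i$ and $g = mh$ with $m_i, m \in M$ and $h_i, h \in H$. Any subnet of $(m_i)$ has a further subnet converging to some $m' \in M$ by compactness; along it $h_i = m_i\inv g_i \to (m')\inv g$, and since $H$ is closed this limit lies in $H$, so $g = m'\cdot((m')\inv g)$ exhibits an $M$-part and an $H$-part of $g$. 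Uniqueness of the decomposition (from $M \cap H = \triv$) forces $m' = m$, so $m$ is the unique cluster point of $(m_i)$ in the compact Hausdorff space $M$, whence $m_i \to m$ and then $h_i = m_i\inv g_i \to m\inv g = h$. This shows $g \mapsto (m,h)$ is continuous, so $\mu$ is a homeomorphism and $G \cong M \rtimes H$ as topological groups.

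The main (and essentially only) obstacle is this last continuity argument: a continuous bijective homomorphism of topological groups need not be a homeomorphism in general, and the remark following the definition of $G/H$-complement only asserts the splitting under a Polishness assumption. The point is that compactness of $M$ supplies exactly the subnet extraction needed to bypass any such hypothesis, so no extra assumptions on $G$ are required beyond those already in force.
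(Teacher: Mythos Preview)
Your proposal is correct and follows essentially the same approach as the paper: both invoke Lemma~\ref{lem:irreducible:minimal_compact}(ii),(iii) for the first assertions, and both prove that the multiplication map $M \rtimes H \to G$ is a homeomorphism via a net argument using compactness of $M$ to extract convergent subnets and $M \cap H = \triv$ to pin down the limit. The only cosmetic difference is that the paper checks openness at the identity by contradiction, whereas you verify continuity of the inverse at an arbitrary point directly.
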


\begin{proof}
By Lemma~\ref{lem:irreducible:minimal_compact} we see that $N$ contains some $M \in \mc{M}_G$ and we have $G = MH$.

Suppose now that $M \cap H = \triv$.  Then clearly we have a continuous bijective homomorphism $\theta: M \rtimes H \rightarrow G$ given by $\theta(m,h) = mh$.  Consider now a neighbourhood $O$ of the identity in $M \rtimes H$: to show $\theta$ is a homeomorphism, it is enough to show $\theta(O)$ is an identity neighbourhood in $G$.  So suppose for a contradiction there is a net $(g_i)$ in $G$ converging to the identity, such that $g_i \not\in \theta(O)$ for all $i$.  Then we can write $g_i = m_ih_i$ for $m_i \in M$ and $h_i \in H$, and after passing to a subnet, $(m_i)$ converges to some $m \in M$.  Then $(h_i)$ converges to $m\inv$, and since $M \cap H = \triv$ we have $m = 1$.  So the sequences $(m_i)$ and $(h_i)$ both converge to the identity, which means $(m_i,h_i) \in O$ for some $i$, a contradiction.  Thus $\theta$ is an isomorphism of topological groups.
\end{proof}

The following is immediate from Lemma~\ref{lem:irreducible:minimal_compact}(i) and Corollary~\ref{cor:second_countable}.

\begin{cor}\label{cor:irreducible:second_countable}
Let $G/H$ be a faithful irreducible coset space, such that $G$ is a $\sigma$-compact locally compact group.  Then $G$ is second-countable.
\end{cor}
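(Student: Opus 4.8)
The plan is to verify that $G$ meets the hypotheses of Corollary~\ref{cor:second_countable}, namely that $G$ is a $\sigma$-compact locally compact group without arbitrarily small nontrivial compact normal subgroups. The first two properties hold by assumption, so the only thing left to establish is the absence of arbitrarily small nontrivial compact normal subgroups.

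For this I would first apply Lemma~\ref{lem:irreducible:minimal_compact}(i), which yields an identity neighbourhood $O$ containing no nontrivial normal subgroup of $G$; in particular, $O$ contains no nontrivial \emph{compact} normal subgroup. Since every sufficiently small compact normal subgroup would be contained in $O$, this immediately rules out arbitrarily small nontrivial compact normal subgroups. Note that the existence of $O$ relies only on $G/H$ being a faithful irreducible coset space, and makes no use of local compactness or $\sigma$-compactness, so it is available unconditionally here.

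With this in hand, Corollary~\ref{cor:second_countable} applies directly and gives that $G$ is second-countable. I do not anticipate any genuine obstacle: the only (entirely routine) observation is the passage from ``a single identity neighbourhood $O$ free of nontrivial normal subgroups'' to ``no arbitrarily small nontrivial compact normal subgroups'', which is precisely the hypothesis required by Corollary~\ref{cor:second_countable}, so the two cited results combine with no further work.
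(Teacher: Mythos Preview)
Your proposal is correct and is exactly the argument the paper gives: it simply notes that the result is immediate from Lemma~\ref{lem:irreducible:minimal_compact}(i) together with Corollary~\ref{cor:second_countable}. There is nothing to add.
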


The compact second-countable topologically characteristically simple groups have been classified: they are all either abelian or of semisimple type, and those of semisimple type can be given as a direct product $S^{\kappa}$, where $1 \le \kappa \le \aleph_0$ and $S$ belongs either to the class of nonabelian finite simple groups or the class of abstractly simple connected compact Lie groups, both classes that have been completely described.

\subsection{Compactly generated locally compact groups}\label{sec:cg_irreducible}

In the setting of compactly generated locally compact groups, we have strong restrictions on descending chains of closed normal subgroups of $G$; see \cite{RW-EC}.  In the case of a faithful irreducible coset space, we deduce the following special case.

\begin{lem}\label{lem:minimal_normal}
Let $G$ be a compactly generated locally compact group and let $H \le G$ be such that $G/H$ is a faithful irreducible coset space.  Let $N$ be a nontrivial closed normal subgroup of $G$.  Then at least one of the following holds:
\begin{enumerate}[(i)]
\item There is $M \le N$ such that $M \in \mc{M}_G$;
\item There is a descending chain $(D_i)_{i \in I}$ of nontrivial discrete normal subgroups of $G$ contained in $N$, such that $\bigcap_{i \in I}D_i = \triv$.
\end{enumerate}
\end{lem}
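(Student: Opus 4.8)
The plan is to separate the two conclusions by a Zorn's lemma argument on the lattice of nontrivial closed normal subgroups of $G$ contained in $N$, reducing everything to the problem of exhibiting a single nontrivial \emph{discrete} normal subgroup of $G$ inside $N$. First note that $G$ is second-countable by Corollary~\ref{cor:irreducible:second_countable}. Let $\mc{N}$ be the poset of nontrivial closed normal subgroups of $G$ contained in $N$, ordered by inclusion. If $\mc{N}$ has a minimal element $M$, then $M$ is in fact minimal among \emph{all} nontrivial closed normal subgroups of $G$ (any smaller one would again lie in $N$), so $M \in \mc{M}_G$ with $M \le N$ and (i) holds. I therefore assume from now on that $\mc{N}$ has no minimal element, and aim to establish (ii).

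The key observation is that discreteness is inherited downwards: any closed normal subgroup of $G$ contained in a discrete normal subgroup is itself discrete, and is closed since discrete subgroups of Hausdorff groups are closed. Consequently, if the poset $\mc{D}$ of nontrivial discrete normal subgroups of $G$ contained in $N$ had a minimal element $D'$, then $D'$ would also be minimal in $\mc{N}$ (any nontrivial closed normal $L < D'$ is discrete, hence already lies in $\mc{D}$), contradicting our standing assumption; so $\mc{D}$ has no minimal element. Provided $\mc{D}$ is nonempty, a second application of Zorn's lemma then yields (ii): were every descending chain in $\mc{D}$ to have nontrivial intersection, that intersection (a discrete normal subgroup) would be a lower bound in $\mc{D}$ and force a minimal element, so some descending chain $(D_i)_{i \in I}$ in $\mc{D}$ has $\bigcap_{i} D_i = \triv$, which is exactly conclusion (ii). Thus it suffices to prove that $\mc{D} \neq \emptyset$, i.e.\ that $N$ contains a nontrivial discrete normal subgroup of $G$.

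To produce such a subgroup I would invoke an essentially chief series
\[
\triv = G_0 < G_1 < \dots < G_n = G
\]
of $G$ from \cite{RW-EC} and intersect it with $N$. Let $j$ be least with $P := N \cap G_j \neq \triv$; then $P$ is a nontrivial closed normal subgroup of $G$ with $P \cap G_{j-1} = \triv$, so the quotient map restricts to a continuous injection $P \hookrightarrow G_j/G_{j-1}$ into the essentially chief factor $G_j/G_{j-1}$, which is compact, discrete or a chief factor of $G$. If the factor is discrete, then $G_{j-1}$ is open in $G_j$, and $P \cap G_{j-1} = \triv$ forces $P$ to be discrete, giving the desired subgroup. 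If the factor is a chief factor of $G$, then $P$ supplements $G_{j-1}$ in $G_j$, and one checks directly that $P$ admits no proper nontrivial closed normal subgroup of $G$, so $P \in \mc{M}_G$ — contradicting that $\mc{N}$ has no minimal element, so this case cannot occur.

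The remaining and genuinely difficult case is the compact one, which I expect to be the real obstacle: here $P$ injects continuously into a compact group without thereby being forced to be compact, since its image could be a proper dense subgroup of its closure. I would first use Lemma~\ref{lem:irreducible:minimal_compact} to record that $N$ contains no nontrivial compact normal subgroup of $G$ (any such subgroup would contain some $M \in \mc{M}_G$, against our assumption), so in particular $P$ itself is not compact. The delicate step is then to convert ``$P$ injects into a compact factor, yet $N$ contains neither a minimal nor a compact nontrivial closed normal subgroup of $G$'' into the conclusion that $P$ (or some closed normal subgroup of $G$ inside it) is discrete; this is precisely where the finer restrictions of \cite{RW-EC} on descending chains of closed normal subgroups in compactly generated locally compact groups must be brought to bear, and I anticipate that this compact-factor analysis carries the main weight of the argument.
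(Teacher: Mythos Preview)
Your reduction to ``find a single nontrivial discrete normal subgroup of $G$ inside $N$'' is correct and is essentially what the paper accomplishes too. But the route you then take, via an essentially chief series of $G$, diverges from the paper and has real gaps.

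In your chief factor case, the assertion ``one checks directly that $P$ admits no proper nontrivial closed normal subgroup of $G$'' is not justified. The continuous injection $P \hookrightarrow G_j/G_{j-1}$ need not be a closed map, so a proper closed normal $L < P$ can still have dense image in $G_j/G_{j-1}$, just as $P$ does; the chief factor hypothesis only controls \emph{closed} $G$-invariant subgroups of $G_j/G_{j-1}$, not arbitrary dense ones, so nothing forces $L = P$. (Note also that the three types in an essentially chief series are not mutually exclusive: a chief factor can be compact, so your case split does not cleanly separate the ``compact'' difficulty from the ``chief'' one.) And the compact case is, as you acknowledge, left entirely open.

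The paper bypasses all of this by never invoking an essentially chief series. Having assumed (i) fails, it applies Zorn directly to the poset of nontrivial closed normal subgroups of $G$ contained in $N$ to get a descending chain $(D_i)_{i \in I}$ with trivial intersection, and then applies \cite[Theorem~3.3]{RW-EC} --- the descending chain result you allude to only at the very end --- to conclude that some $D_i$ admits a compact normal subgroup $K_i$ of $G$ that is open in $D_i$. Lemma~\ref{lem:irreducible:minimal_compact} (which you correctly invoke) then forces $K_i = \triv$, so $D_i$ is discrete; the tail of the chain from $D_i$ onward is exactly (ii). In short, the tool you reserve for the hardest subcase is the whole engine of the argument, and the essentially chief series is a detour that creates the problems you are struggling with.
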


\begin{proof}
We suppose that (i) is not the case and let $G$ act on itself by conjugation.  By Zorn's lemma, there is a descending chain $(D_i)_{i \in I}$ of nontrivial closed $G$-invariant subgroups of $N$ with trivial intersection.  By \cite[Theorem~3.3]{RW-EC}, there is some $i \in I$ for which there exists a compact open $G$-invariant subgroup $K_i$ of $D_i$.  However, by Lemma~\ref{lem:irreducible:minimal_compact}, $N$ does not contain any nontrivial compact normal subgroup of $G$, so $K_i = \triv$, and hence $D_i$ is discrete.  Any subgroup of $D_i$ is discrete, so by replacing $(D_i)_{i \in I}$ with a subsequence, we obtain a descending chain as in (ii).
\end{proof}

We now prove the theorem from the introduction.

\begin{proof}[Proof of Theorem~\ref{thm:cg_irreducible}]
We first note that $G$ is $\sigma$-compact since it is compactly generated; it is therefore second-countable by Corollary~\ref{cor:irreducible:second_countable}.  For every nontrivial normal subgroup $N$ of $G$, we have $\CC_H(N) =\triv$ by Lemma~\ref{lem:irreducible_free}(i).

Suppose $\mc{M}_G = \emptyset$.  Then (iv) follows by Lemma~\ref{lem:minimal_normal}, while (i), (ii) and (iii) are clearly false.  From now on we may assume $\mc{M}_G \neq \emptyset$; this rules out (iv), and the remaining cases are clearly mutually exclusive.

Suppose there is $M \in \mc{M}_G$ with $\CC_G(M) = \triv$.  Then every nontrivial closed normal subgroup intersects $M$ nontrivially, and hence contains $M$.  Thus in fact $M = \Mon(G)$ and we are in case (iii).  Hence we may suppose for all $M \in \mc{M}_G$ that $\CC_G(M) \neq \triv$.  We then deduce from Lemma~\ref{lem:irreducible_free}(iii) that $M \cap H = \triv$ for every $M \in \mc{M}_G$.  Applying Proposition~\ref{prop:irreducible_free:semidirect}, we deduce that $G/H$ is of compressed semidirect type.  If $G$ has no nontrivial compact normal subgroup, then (ii) holds.  Otherwise, by Lemma~\ref{lem:irreducible:minimal_compact}(ii), some $M \in \mc{M}_G$ is compact, and applying Corollary~\ref{cor:irreducible:minimal_compact}, we have $G = M \rtimes H$; we then deduce case (i) via Proposition~\ref{prop:irreducible_free:semidirect}(ii).
\end{proof}

Supposing we have a faithful irreducible coset space $(M \rtimes H)/H$, then if $M$ is not compact, there could be various topological groups $G$ and continuous injective homomorphisms $\theta: M \rtimes H \rightarrow G$, restricting to closed maps on $M$ and on $H$, such that the image is dense and $G/\theta(H)$ is a faithful irreducible coset space.  Here are some examples in the setting of compactly generated \tdlc groups.

\begin{example}\label{ex:no_complement}
\begin{enumerate}[(i)]
\item
Let $S$ be a nonabelian finite simple group.  Let $M_1$ be the direct product $S^{\Zb}$ and let $M_2$ be the direct sum $\bigoplus_{\Zb}S$ with the discrete topology; note that there is an obvious continuous dense injective homomorphism $\theta$ from $M_2$ to $M_1$.  Via $\theta$, we have a faithful action of $M_2$ on $M_1$ by conjugation; this action naturally extends to an action $\alpha$ of $H = M_2 \rtimes_{\sigma} \Zb$, where $\sigma$ is the shift action of $\Zb$.  Form the semidirect product $G = M_1 \rtimes_{\alpha} H$; observe that $H$ leaves no proper nontrivial closed subgroup of $M_1$ invariant and $\CC_H(M_1) = \triv$, so $G/H$ is a faithful irreducible coset space.  We see that $M_3 := \CC_G(M_1)$ is a closed normal subgroup, given by
\[
M_3 = \{(m\inv_1,m_2) \in M_1 \rtimes_{\alpha} M_2 \mid \theta(m_2) = m_1\}.
\]
It is then easy to verify that $M_3 \cong M_2$ as topological groups, so $M_3 \not\cong M_1$, and moreover $M_3 \in \mc{M}_G$ and $M_3 \cap H = \triv$.  We now observe that the subgroup $M_3H$ of $G$ is a proper dense subgroup of $G$, abstractly isomorphic to a semidirect product $M_3 \rtimes H$, but the topology of $M_3H$ is coarser than that of $M_3 \rtimes H$; thus there is an injective continuous homomorphism from $M_3 \rtimes H$ to $G$ with dense image.  The coset space $(M_3 \rtimes H)/H$ is itself irreducible.  Finally, observe that $M_3 \rtimes H$ is a finitely generated discrete group, while $G$ is a compactly generated nondiscrete \tdlc group.

\item
With a little more work, we can obtain a faithful irreducible coset space with no closed normal complement.  Let $p$ and $q$ be distinct prime divisors of $|S|$ and for $r \in \{p,q\}$, let $T_r$ be an $r$-Sylow subgroup of $S$ and let $L_r$ be the local direct product $\bigoplus_{\Zb}(S,T_r)$.  As before we take $H = M_2 \rtimes_{\sigma} \Zb$.  Let $\theta_r$ be the obvious continuous dense injective homomorphism from $M_2$ to $L_r$; let $M_2$ act on $L_r$ via the conjugation action of $\theta_r(M_2)$; combine the actions to give a coordinatewise action of $M_2$ on $L_p \times L_q$, and then extend in the obvious way to an action $\beta$ of $H$ on $L_p \times L_q$.  Note that there is a closed subgroup $D$ of $L_p \times L_q$ that is a diagonal copy of $M_2$: given a net $(m_i)$ in $M_2$, then since $T_p \cap T_q = \triv$, the net $(\theta_p(m_i),\theta_q(m_i))$ converges in $L_p \times L_q$ if and only if it is eventually constant in each coordinate, that is, $(m_i)$ converges in $M_2$.  Form the semidirect product $G_1 = (L_p \times L_q) \rtimes_{\beta} H$.  We then form a Hausdorff group quotient $\pi: G_1 \rightarrow G_2$ of $G_1$, where $\pi$ is chosen so that $\ker\pi$ consists of all triples $(l\inv_p,l\inv_q,h)$ such that $\beta(h)$ acts as conjugation by $(l_p,l_q)$ on $L_p \times L_q$.  (One can check that the group just described as $\ker\pi$ is indeed a closed normal subgroup of $G_1$.)  We see that $\ker\pi \cap H = \triv$, and the product $\ker\pi H$ is closed: specifically, $\ker\pi H = D \rtimes H$.  Thus $\pi(H)$ is closed, and in fact, since we are working with Polish groups, it follows that $\pi$ restricts to a closed embedding of $H$ into $G_2$.  Similarly, $\pi$ restricts to closed embeddings of $L_p$ and $L_q$ into $G_2$, and we see that $\pi(H)$ acts faithfully on $\pi(L_p)$ and on $\pi(L_q)$ by conjugation and has trivial intersection with each of them.  Any proper overgroup $K$ of $\pi(H)$ in $G_2$ will contain the image of some element $(l\inv_p,l\inv_q,h)$ such that for at least one $r \in \{p,q\}$, $\theta_{r}(h)$ does not act as $l_r$; from there, one can deduce that $\pi(L_{r}) \le K$ and hence that $K$ is dense.  We now see that $G_2/\pi(H)$ is a faithful irreducible coset space.  We also see that $\mc{M}_G = \{\pi(L_p),\pi(L_q)\}$, and both $\pi(L_p)$ and $\pi(L_q)$ are $G_2/\pi(H)$-free; however, the product $\pi(L_p)\pi(H)$ does not contain $\pi(L_q)$ and \textit{vice versa}.  By considering the closed normal subgroups of $G_1$, we see that any nontrivial closed normal subgroup of $G_2$ other than $\pi(L_p)$ and $\pi(L_q)$ contains the subgroup $\pi(M_2)$ of $\pi(H)$.  Thus there are two minimal closed normal $G_2/\pi(H)$-free subgroups, but no closed normal $G_2/\pi(H)$-complement.
\end{enumerate}
\end{example}

\subsection{Normal subgroups of semisimple type}\label{sec:semisimple}

Let $N$ be a topologically characteristically simple Polish group.  We say $N$ is of \defbold{(strict) semisimple type} if $N$ has a closed normal subgroup $S$ that is nonabelian and topologically simple.  It follows that $N$ is topologically generated by the set $\mc{S} := \{\alpha(S) \mid \alpha \in \Aut(N)\}$ of $\Aut(N)$-conjugates of $S$, which are the \defbold{components} of $N$.  By \cite[Lemma~5.14]{RW-Polish} we have $|\{\alpha(S) \mid \alpha \in \Aut(N)\}| \le \aleph_0$.  On the other hand, any closed normal subgroup of $N$ that centralizes $\alpha(S)$ for all $\alpha \in \Aut(N)$ is central and hence trivial.  By \cite[Theorem~1.7]{RW-Polish}, the diagonal map $d: N \rightarrow \prod_{Q \in \mc{S}}N/\CC_N(Q)$ is a continuous injective homomorphism; moreover, by \cite[Theorem~1.4]{RW-Polish}, each of the groups $N/\CC_N(Q)$ is topologically simple, and we see that $N/\CC_N(\alpha(S))$ is isomorphic to $N/\CC_N(S)$ for all $\alpha \in \Aut(N)$.  See \cite{RW-Polish} for more information.

Suppose now that $G$ is a Polish group, with $H$ a closed subgroup, such that there exists $M \in \mc{M}_G$ of semisimple type, with topologically simple closed normal subgroup $S$; write $K = M \cap H$.  Let us consider some necessary conditions for $G/H$ to be faithful irreducible.  We need the product $MH$ to be dense in $G$.  (Actually, we need the condition that for every proper closed overgroup $H'$ of $H$, then $M \cap H' > K$; but this is difficult to characterize in full generality: it is stronger than the condition that $MH$ is dense, but weaker than requiring $MH = G$.)  The fact that $M$ is a minimal closed normal subgroup of $G$ is then equivalent to the condition 
\[
\{\alpha(S) \mid \alpha \in \Aut(M)\} = \{hSh\inv \mid h \in H\}.
\]
We need to have $\bigcap_{m \in M}mKm\inv = \triv$, which now simply means that $S \nleq K$.  The final condition is that $K$ should be the unique largest proper closed $H$-invariant subgroup of $M$; this last condition is much more difficult to describe.

Already for finite groups, in the notation introduced by C. Praeger in \cite{Praeger}, there are seven types of primitive action that can occur (all except type HA).  We will not attempt to classify all the types of (locally compact) Polish groups that can occur in this context, but the following discussion should at least illustrate how analogues of all the O'Nan--Scott types also occur for infinite locally compact groups and Polish groups, and also some topological obstacles to their construction.  Let $\alpha: H \rightarrow \Aut(M)$ be the conjugation action of $H$ on $M$ and let $\beta: M \rightarrow \Inn(M)$ be the conjugation action of $M$ on itself.  When we refer to the type of a (not necessarily faithful) irreducible coset space, we mean the type of its faithful form.

\paragraph{\bf Type HS}
In this case, $M=S$ and $K = \triv$.  The group $H$ is such that $\beta\inv(\alpha(M))$ is a dense subgroup of $M$: this means that every closed $H$-invariant subgroup of $M$ is $M$-invariant, and hence either $\triv$ or $M$.  Note that $G$ is not monolithic in this case: the centralizer $\CC_G(M)$ contains the nontrivial group $R$ of all elements $m\inv h$ such that $m \in M$, $h \in H$ and $\beta(m) = \alpha(h)$.  In the case that $G = M \rtimes H$, the centralizer of $M$ is exactly $R$.

\paragraph{\bf Type HC}  
Again $K = \triv$, but now $S < M$, so $|\{\alpha(S) \mid \alpha \in \Aut(M)\}| \ge 2$.  This time $H$ is such that $(S \rtimes \N_S(H))/\N_S(H)$ is of type HS and $H$ acts transitively on $\{\alpha(S) \mid \alpha \in \Aut(M)\}$.  As for type HS, the centralizer of $M$ is nontrivial, but not necessarily isomorphic to $M$ as a topological group.  Example~\ref{ex:no_complement}(i) is of type HC.

\

Clearly, for every topologically characteristically simple Polish group $M$ of semisimple type, there are examples of Polish faithful irreducible coset spaces $G/H$ of type HS or HC that have $M \in \mc{M}_G$.  Indeed, we can always take $H$ to be countable (while still accounting for the conjugation action of a dense subgroup of $M$) and give it the discrete topology, and take $G = M \rtimes H$, so we can always choose $\CC_G(M)$ to be countable, even if $M$ is uncountable.  Thus the symmetry $M \cong \CC_G(M)$ that one sees in the finite case is broken in this more general setting.  Indeed, there are likely to be many more examples like Example~\ref{ex:no_complement}(ii), where we have two closed normal subgroups of semisimple type, each the centralizer of the other, such that neither embeds in the other, but both groups embed densely in some other group of semisimple type.

\

\paragraph{\bf Type TW}
We now have $K = \triv$, $S < M$ and $\CC_G(M) = \triv$.  In particular, $H$ does not induce any inner automorphisms of $M$; however, in order for $G/H$ to be irreducible, $\N_H(S)$ must induce a sufficiently large group of automorphisms of $S$ that, for every nontrivial closed $\N_H(S)$-invariant subgroup $T$ of $S$, we have $S \le \cgrp{hTh\inv \mid h \in H}$.  Given the standard homomorphism of $M$ into a direct product, we see that a necessary condition on $T$ here is that $T\CC_M(S)$ must be dense in $M$.  The usual way to build groups of this type is the (unrestricted) twisted wreath product construction introduced by B. H. Neumann in \cite{NeumannWreath}.  We start with a group $H$, a subgroup $H_1$, and a homomorphism $\varphi: H_1 \rightarrow \Aut(S)$ such that $\bigcap_{h \in H}h\varphi\inv(\Inn(S))h\inv = \triv$.  Let $S^H = \prod_H S$, regarded as the set of functions from $H$ to $S$, and let $H$ act on $S^{H}$ by the regular shift.  We then define a base group
\[
B:= \{f \in S^{H} \mid \forall h \in H, k \in H_1: f(hk) = \varphi(k\inv)(f(h))\};
\]
one sees that $B$ is $H$-invariant, so we have a semidirect product $B \rtimes H$ formed as a subgroup of $S^{H} \rtimes H$.  As a group, $B$ is isomorphic to $S^{H/H_1}$, with $H$ permuting the copies of $S$ in the same manner as its action on $H/H_1$, and in particular $H_1$ occurs as the normalizer of one of the copies of $S$, acting via $\varphi$.  Since $\bigcap_{h \in H}h\varphi\inv(\Inn(S))h\inv = \triv$, the action of $H$ on $H/H_1$ is faithful; from there we deduce that $\CC_G(B) = \triv$.  We then see that $H$ is maximal in $B \rtimes H$ if and only if $\varphi(H_1)$ does not preserve any proper nontrivial subgroup of $S$.

If we want to obtain a Polish group at the end of the construction, some allowances and adjustments for the topology need to be made.  We take $S$ to be some topologically simple Polish group.  We want to have at most countably many copies of a simple group in the base, with $H$ having a continuous permutation action on the copies, so take $H$ to be a Polish group with an open subgroup $H_1$ of at most countable index, and require the homomorphism $\varphi: H_1 \rightarrow \Aut(S)$ to yield a continuous action of $H_1$ on $S$.  The end result is then a semidirect product $M \rtimes H$ where $M = \prod_{H/H_1} S$.

The easiest example of a primitive twisted wreath product is $\prod_{\Alt(6)/\Alt(5)}\Alt(5) \rtimes_{\varphi} \Alt(6)$, where $\Alt(6)$ permutes six copies of $\Alt(5)$ with its usual permutation action, but additionally the point stabilizer acts on the corresponding copy of $\Alt(5)$ as its inner automorphism group.  One can easily make nondiscrete Polish analogues of this, for example, the group $\prod_{\Sym(\Nb)/\Sym(\Nb)_0}\Sym(\Nb)_0 \rtimes_{\varphi} \Sym(\Nb)$, where $\Sym(\Nb)$ acts in the usual way on $\Nb$ and $\Sym(\Nb)_0$ is a point stabilizer, acting on the corresponding copy in the base group by conjugation.  There are also \tdlc second-countable examples with compact monolith, for instance if $S$ is a finite simple group, we can embed $S$ in a countable group $H$ in which $\bigcap_{h \in H}hSh\inv = \triv$ (for example a free product $H = S \ast \Zb$) and then form the twisted wreath product $\prod_{H/S}S \rtimes_{\varphi} H$.  The same construction yields discrete examples of type TW, starting from an infinite discrete simple group $S$, by replacing the base group with the direct sum instead of the direct product.  What is more difficult is to find examples of type TW where $G$ and $S$ are locally compact but not compact or discrete.  If we only have finitely many copies of $S$, then $|H:H_1|$ is finite and so it is difficult to arrange for $\varphi\inv(\Inn(S))$ to have trivial core in $H$, while also making $\varphi(H_1)$ large enough to eliminate proper nontrivial $\varphi(H_1)$-invariant closed subgroups of $S$.  On the other hand, if we want to have infinitely many copies of $S$ in $M$, we run into the problem that $\prod_{H/H_1} S$ is not locally compact.  One might be tempted to replace the direct product $\prod_{H/H_1} S$ with some local direct product $\bigoplus_{H/H_1} (S,U)$ where $U$ is a compact open subgroup of $S$, but this causes problems for the action of $H$, because in the action of $\varphi(H_1)$ on $S$, we cannot ask for $U$ to be $S$-invariant (as this would lead to a nontrivial compact $H$-invariant subgroup of the base group).

\

For the remaining types, $M \cap H$ is nontrivial but $M$ has trivial centralizer.  To simplify matters we will only consider the case that $M$ is a $G/H$-supplement.

\paragraph{\bf Type AS}
This is the case where $M = S$ and $\CC_G(M) = \triv$, so $G$ is some group of automorphisms of $S$ containing the inner automorphisms, and $H$ is some maximal closed subgroup of $G$ that does not contain $S$.  There are known infinite examples of this type of coset space, including those coming from the class $\ms{S}$ of nondiscrete compactly generated topologically simple \tdlc groups: for example, by the main theorem of \cite{CapMar}, every maximal parabolic of a simple geometric locally Kac--Moody group is abstractly maximal, and \cite[Theorem 26]{SmithDuke} gives a sufficient condition (leading to $2^{\aleph_0}$ examples) for a group acting on a tree to be compactly generated and simple with a primitive action.  A source of examples where $H$ is not open is when $G$ is a boundary-$2$-transitive subgroup of $\Aut(\mc{T})$, for $\mc{T}$ a regular tree; $H$ is the stabilizer of an end; and $S$ is the cocompact topologically simple open normal subgroup of $G$ provided by \cite[Proposition~3.1.2]{BurgerMozes}.  However, there is not much we can say systematically here about actions of type AS for groups in $\ms{S}$, let alone more general classes of topologically simple groups, as essentially nothing is known in general about their maximal closed subgroups (or even their maximal open subgroups).

\

\paragraph{\bf Type SD}
Let us assume for the moment that $M$ is a direct product of its components, regarded as the set of functions from an indexing set $I$ to $S$, where $2 \le |I| \le \aleph_0$.  For each equivalence relation $P$ on $I$ that is invariant under some transitive subgroup of $\Aut(M)$, we define the block diagonal subgroup
\[
S^*_P := \{f \in M \mid (i,j) \in P \Rightarrow f(i) = f(j)\},
\]
and in particular define the diagonal subgroup $S^* := S^*_{I \times I}$.  Then $S^*$ is a closed subgroup of $M$, and the closed subgroups of $M$ containing $S^*$ are exactly the block diagonal subgroups.  Thus we can build a Polish group $M \rtimes H$ where $S^*$ is maximal among proper $H$-invariant subgroups of $M$ by choosing a Polish group $H$ that acts continuously and faithfully on $M$ and primitively on $I$, in other words, $I$ is isomorphic as an $H$-space to $H/\N_H(S)$ and the point stabilizer $\N_H(S)$ is a maximal subgroup of $H$.  By continuity, $\N_H(S)$ must be open in $H$.  We additionally require that there is a closed embedding $\phi$ of $S^*$ into $H$ that is compatible with the conjugation action of $S^*$ on $M$, so that $\phi(S^*)$ accounts for exactly the elements of $H$ that induce inner automorphisms of $M$.  (The simplest case is to take $H = S^* \rtimes H^*$ where $H^*$ faithfully permutes the components of $M$, but greater generality is possible for finite groups, and probably also for infinite Polish groups.)  We can then form the quotient group $G = (M \rtimes H)/T$ where $T = \{(t\inv, \phi(t)) \mid t \in S^*\}$.  Notice that the quotient map $\pi$ restricts to a topological embedding on both $M$ and $H$, so we can also regard $M$ and $H$ as subgroups of $G$.  With this identification and the way $\phi$ was specified, we see that $\CC_G(M) = \triv$.  Thus $G/H$ is a faithful coset space, with $G/H$-supplement $M \in \mc{M}_G$, and $M \cap H = \pi(S^*) \cong S$.

The above construction gives many examples of faithful irreducible coset spaces in Polish groups, as well as faithful irreducible coset spaces in locally compact groups with a compact minimal normal subgroup.  The coset space $H/\N_H(S)$ is also irreducible.  If we allow $S$ to be locally compact but not compact, we still get locally compact examples when $I$ is finite (in which case the faithful version of $H/\N_H(S)$ is finite and hence described by the O'Nan--Scott theorem).  What is less clear is whether there are versions of this construction where $M$ is not a direct product of its components (for example, a local direct product, which can be locally compact with infinitely many noncompact components).  The difficulty is that beyond the direct product case, it is not clear if we can define a ``diagonal subgroup'' $S^*$ in a way that ensures that $S^*$ accounts for $\Inn(S)$ (or at a minimum, accounts for enough inner automorphisms of $S$ that it does not leave any proper closed nontrivial subgroup of $S$ invariant).

\

\paragraph{\bf Type CD}
This is the compound form of type SD: we first form a direct product $M_0$ of $2 \le \kappa \le \aleph_0$ copies of the topologically simple group $S$, and form the diagonal subgroup $S^*_0$ in the same manner as before.  Then we form a Polish group $M$ containing $M_0$ as a proper closed normal subgroup.  (Here we can take $M$ to be a direct product of copies of $M_0$, but there is more flexibility.)  Suppose now that there is a Polish group $H$ admitting a faithful continuous action on $M$, such that $M$ is the quasi-product of the set $\{h(M_0) \mid h \in H\}$, in other words, distinct $H$-conjugates of $M_0$ commute and the set of all $H$-conjugates of $M_0$ generates $M$ topologically, with $\bigcap_{h \in H}\CC_M(h(M_0)) = \triv$.  We also require that $\N_H(M_0)$ permutes primitively the copies of $S$ inside $M_0$ and that $S^{**} \cap M_0 = S^*_0$ where $S^{**} = \cgrp{h(S^*_0) \mid h \in H}$.    Suppose also that there is a closed embedding $\phi$ of $S^{**}$ into $H$ compatible with the conjugation action of $S^{**}$ on $M$, such that $\phi(S^{**})$ accounts for exactly the elements of $H$ that induce inner automorphisms of $M$.  Then we can form the quotient group $G = (M \rtimes H)/T$ where $T = \{(t\inv, \phi(t)) \mid t \in S^{**}\}$, and embed $M$ and $H$ in $G$ via the quotient map.  As before, we see that $G/H$ is a faithful irreducible coset space, with $M \in \mc{M}_G$; this time, $M \cap H = \pi(S^{**})$ and the group $\pi(S^{**})$ is of semisimple type, being a quasi-product of copies of $S^*_0 \cong S$.

\

\paragraph{\bf Type PA}
The final O'Nan--Scott type is a compound form of type AS.  Suppose that $S/T$ is an irreducible coset space of $S$ (it is faithful because $S$ is topologically simple).  We then form a Polish group $M$ of semisimple type, with components isomorphic to $S$.  (Again, the direct product will always work, but there may be other choices.)  This time, $H$ is a Polish group admitting a faithful continuous action on $M$ by automorphisms, acting transitively on the components, such that $\N_H(S) = \N_H(T)$.  Let $T^* = \bigcap_{h \in H}\N_M(h(T))$; note that $S \cap T^* = T$.  We then require a closed embedding $\phi$ of $T^{*}$ into $H$ compatible with the conjugation action of $T^{*}$ on $M$, such that $\phi(T^*)$ accounts for exactly the elements of $H$ that induce inner automorphisms of $M$.    Similarly to before we form $G = (M \rtimes H)/\grp{(t\inv, \phi(t)) \mid t \in T^{*}}$ and embed $M$ and $H$ in $G$ via the quotient map.  To ensure that $G/H$ is a faithful irreducible coset space, we need only confirm that $\CC_G(M) = \triv$ (we have ensured this in the same way as for types SD and CD) and that $T^*$ is maximal among proper closed $H$-invariant subgroups.  Given a closed $H$-invariant subgroup $M'$ of $M$ properly containing $T^*$, then for some $m \in T'$, $h \in H$ and $t \in T$, we have $[m,h(t)] \not\in h(T)$.  It follows that $\grp{[m,h(t)],h(T)}$ is dense in $h(S)$; since $h(T) \le T^* \le M'$, we have $h(S) \le M'$; and since $M'$ is $H$-invariant, we have $h'(S) \le M'$ for all $h' \in H$ and hence $M' = M$.

\subsection{Open questions}\label{sec:irreducible_que}

The following are known not to occur for primitive permutation groups, but we have not ruled them out for faithful irreducible coset spaces.  It would be particularly interesting to have answers under one of the following additional hypotheses: (a) $G$ is Polish; (b) $G$ is locally compact; (c) $G$ is compactly generated and locally compact.

\begin{quest}\label{que:minimal}
Let $G/H$ be a faithful irreducible coset space.
\begin{enumerate}[(i)]
\item Can $\mc{M}_G$ be infinite?
\item Can we have $|\mc{M}_G| > 2$?
\end{enumerate}
\end{quest}

\begin{quest}\label{que:abelian}
Let $G/H$ be a faithful irreducible coset space and suppose there is $M \in \mc{M}_G$ abelian.
\begin{enumerate}[(i)]
\item Can there exist a nontrivial closed normal subgroup $N$ of $G$ such that $M \cap N = \triv$?
\item Can we have $M < \CC_G(M)$?
\item Can we have $MH < G$?
\end{enumerate}
\end{quest}

Note that the parts in Questions~\ref{que:minimal} and \ref{que:abelian} are ordered so that ``yes'' for one part implies ``yes'' for the later parts.  There is also a relationship between Question~\ref{que:minimal}(i) and Question~\ref{que:abelian}(i) in the case that $G$ is a compactly generated locally compact group without nontrivial compact or discrete normal subgroups.  In that setting, it is known for more general reasons (see \cite{CM}) that all but finitely many $M \in \mc{M}_G$ are abelian, so if $G$ is an example for Question~\ref{que:minimal}(i), then it is also an example for Question~\ref{que:abelian}(i).

The following is also open, and describes a phenomenon which goes against the existing classification results for primitive actions, but has not been ruled out in the present setting.

\begin{quest}
Let $G$ be a compactly generated locally compact group and let $H \le G$ be such that $G/H$ is a nondiscrete faithful irreducible coset space.  Can we have $\mc{M}_G = \emptyset$?
\end{quest}

\end{document}